\newtheoremstyle{myremark}     {10pt}{10pt}{}{}{\bfseries}{.}{.5em}{}
\newtheorem{thm}{Theorem}[section]
\newtheorem{cor}[thm]{Corollary}
\newtheorem{lem}[thm]{Lemma}
\newtheorem{prop}[thm]{Proposition}
\theoremstyle{definition}
\newtheorem{defn}[thm]{Definition}
\theoremstyle{myremark}
\newtheorem{rem}[thm]{Remark}
\numberwithin{equation}{section}
\def\C{\mathbb C}
\def\L{\mathbb L}
\def\E{\mathbb E}
\def\I{\mathbb I}
\def\N{\mathbb N}
\def\R{\mathbb R}
\def\T{\mathbb T}
\def\Z{\mathbb Z}
\newcommand{\CB}{\mathcal{B}}
\newcommand{\CC}{\mathcal{C}}
\newcommand{\CD}{\mathcal{D}}
\newcommand{\CE}{\mathcal{E}}
\newcommand{\CH}{\mathcal{H}}
\newcommand{\h}{\mathcal{H}}
\newcommand{\CM}{\mathcal{M}}
\newcommand{\CN}{\mathcal{N}}
\newcommand{\CP}{\mathcal{P}}
\newcommand{\CT}{\mathcal{T}}
\newcommand{\CW}{\mathcal{W}}
\newcommand{\eps}{\varepsilon}
\newcommand{\ups}{\upsilon}
\newcommand{\abs}[1]{\left\vert#1\right\vert}
\newcommand{\norm}[1]{\left\Vert#1\right\Vert}
\newcommand\inner[2]{\left\langle #1, #2 \right\rangle}
\newcommand{\ri}{\rightarrow}
\newcommand{\limn}{ \underset{n \ri \infty}{\lim} }
\begin{document}
	
	\title[Maximal Ergodic Inequality]{Maximal Inequality Associated to Doubling Condition for State Preserving Actions }
	
	\author[P. Bikram]{Panchugopal Bikram}
	\address{School of Mathematical Sciences,
National Institute of Science Education and Research,  Bhubaneswar, An OCC of Homi Bhabha National Institute,  Jatni- 752050, India}
	\email{bikram@niser.ac.in}

	\author[D. Saha]{Diptesh Saha}
	\address{Theoretical Statistics and Mathematics Unit, Indian Statistical Institute, Delhi, India}
	\email{dptshs@gmail.com}

	

	\keywords{ Maximal ergodic theorem, Pointwise ergodic theorem, von Neumann algebras}
	\subjclass[2020]{Primary 46L53, 46L55; Secondary 37A55, 46L40.}
	
	

	\begin{abstract}
		In this article, we prove maximal inequality and ergodic theorems for state preserving actions on von Neumann algebra by an amenable, locally compact, second countable group equipped with the metric satisfying the doubling condition.  
	The key idea is to use Hardy–Littlewood maximal inequality,  a version of the transference principle and certain norm estimates of differences between ergodic averages and martingales. 
	\end{abstract}

	\maketitle 

	\section{Introduction}
	
	In this article, we aim to study non-commutative maximal ergodic inequalities and pointwise ergodic theorems for certain group actions. The origin of ergodic theorems on classical measure spaces traces back to the 1930s due to Birkhoff and von Neumann.  The subject has evolved a lot since then. The non-commutative analogue of the ergodic theorems first appeared in the works of Lance (\cite{Lance1976}) in 1976. In (\cite{Lance1976}), the author proved a pointwise ergodic theorem for the average of a single automorphism preserving a faithful, normal state in von Neumann algebra. These results are then generalised by Kummerer, Conze, Dang-Ngoc and many others (cf. \cite{Kuemmerer1978}, \cite{Conze1978} and references therein).
Eventually, it became evident that the standard approach to establishing an ergodic theorem is to show a maximum inequality and the accompanying Banach principle. 
However, establishing maximal inequality is one of the hardest and most delicate part in the non-commutative setting.

	In the seminal works \cite{Yeadon1977} and \cite{Yeadon1980}, Yeadon obtained maximal ergodic theorems for actions of positive sub-tracial, sub-unital maps on the preduals of semifinite von Neumann algebras and proved associated pointwise ergodic theorems. Later on, Junge and Xu (\cite{Junge2007})) extended Yeadon's maximal inequalities to prove Dunford-Schwartz maximal inequalities in the non-commutative $L^p$-spaces using interpolation techniques.
	
	On the other hand, various results are obtained regarding the ergodic theorems for actions of various groups on classical measure spaces (cf. \cite{anantharaman2010ergodic}, \cite{calderon1953general}). Among them, \cite{Lindenstrauss2001} is particularly important for our purpose. He proved pointwise ergodic theorems for tempered F\o lner sequences, which is considered as the generalisation of Birkhoff's ergodic theorem for actions of second countable, amenable groups. In the non-commutative realm, the first significant breakthrough is made by Hong, Liao and Wang in \cite{Hong2021}. In this article, the authors obtained several maximal inequalities on non-commutative $L^p$-spaces for the actions of locally compact group metric measure spaces satisfying doubling conditions and compactly generated groups of polynomial growth and used them to prove associated pointwise ergodic theorems. Furthermore, these results are extended to the general setting of amenable groups in \cite{cadilhac2022noncommutative}. It is worth mentioning here that both these articles deal with actions preserving a faithful, normal, semifinite trace. However, in \cite{Junge2007}, the authors studied maximal inequalities for ergodic averages in Haagerup's $L^p$-spaces ($1< p < \infty$) associated to state preserving actions of the group $\Z$ and semigroup $\R_+$. However,  the maximal inequality for state preserving actions in the endpoint case ($p=1$) was not studied until \cite{Bik-Dip-neveu}, \cite{pg-ds-ergo-semi}.
	
	In \cite{Bik-Dip-neveu}, the authors studied maximal inequalities and pointwise ergodic theorems on non-commutative $L^1$-spaces for locally compact polynomial growth group actions preserving a faithful, normal state on a von Neumann algebra. To obtain the aforesaid maximal inequality, we basically used an inequality obtained in \cite[Proposition 4.8]{Hong2021} to dominate the associated ergodic averages for groups by an ergodic average of Markov operator.
 In \cite{pg-ds-ergo-semi}, the authors have established similar maximal inequality for the action of $\Z^d_+$ and $\R^d_+$ by using a similar method. In this setup, obtaining maximal inequality for actions of general amenable groups is known to be challenging in the literature, and it requires delicate analysis. However, in this article, we obtain maximal inequalities for actions of group metric measure spaces satisfying the doubling condition using suitable analogues of Calderon's transference principle and maximal inequality for Martingales.
	
	Let $G$ be an amenable, locally compact, second countable group equipped with a right invariant Haar measure $\mu$. We further assume $G$ is a metric space that comes with an invariant metric (cf. Eq. \ref{inv metric-defn}) $d$ satisfying a doubling condition (cf. Eq. \ref{doub cond-defn}). Suppose there exists an increasing sequence of real numbers $(r_n)_{n \in \N}$ such that the balls $V_{r_n}:=\{s \in G: d(e,s)<r_n\}$ satisfy for all $i \in \N$  
		\begin{enumerate}
			\item[(i)] $2^i \leq r_i < 2^{i+1}$ and 
			\item[(ii)] $\frac{\mu(V(x, r_i) \setminus V(x, r_i- (3/2)^i))}{\mu(V(x, r_i))} \leq C' (3/4)^i$ for all $x \in G$,
		\end{enumerate}
  where $C'$ is a constant. Let $\CM$ be a von Neumann algebra and consider the non-commutative dynamical system $(\CM, G, \alpha)$ (cf. Definition \ref{nc dyn sys}). Further suppose that $\CM$ has a faithful, normal state $\rho$ such that $\rho \circ \alpha_s = \rho$ for all $s \in G$. Now one of the main results of this article states the following maximal inequality and ergodic theorem.
	
	\begin{thm}\label{main thm}
		Let $\phi \in \CM_*$. Consider the averages $(A_k(\phi))_{k \in \N}$, where
		\begin{align*}
			A_k(\phi)(x):= A_{r_k}(\phi)(x)= \frac{1}{\mu(V_{r_k})} \int_{V_{r_k}} \phi(\alpha_s(x)) d\mu(s), ~ x \in \CM.
		\end{align*}
		Then,
		\begin{enumerate}
			\item $(A_k(\phi))_{k \geq 1}$ has the property: there exists $\kappa \in \N$ such that for all $n \in \N$ and $\eps, \delta, \upsilon>0$, there exists projections $\{e^{i, \delta, \ups}_{n, \eps}:  i \in [\kappa] ~\}$ such that for all $ 1 \leq i  \leq \kappa $
			
			\begin{enumerate}
				\item $\rho(1- e^{i, \delta, \ups}_{n, \eps}) < \frac{1+ \ups }{\eps}   \norm{  \phi} +\ups $
				\item $A_k(\phi)(x) \leq C\delta \norm{\phi} \norm{x} +  C\eps \rho(x)$ for all $x \in e^{i, \delta, \ups}_{n, \eps} \CM_+ e^{i, \delta, \ups}_{n, \eps}$ and $ 1 \leq k \leq n$.
			\end{enumerate}
			The constant $C$ only depends on the group $G$.\\
			
			\item Furthermore, if $\CM$ is equipped with a faithful, normal tracial state $\tau$, then there exists $\bar{\phi} \in \CM_*$ such that for all $\epsilon>0$ there exists a projection $e \in \CM$ with $\tau(1-e)< \epsilon$ and 
			\begin{align*}
				\lim_{k \to \infty} \sup_{x \in eM_+e, x \neq 0} \frac{\abs{(A_k(\phi) - \bar{\phi})(x)}}{\tau(x)}=0,
			\end{align*}
i.e, $A_n(\phi)$ converges b.a.u. to $\bar{\phi}$ (cf. Definition \ref{bau conv- functionals}).
\end{enumerate}
	\end{thm}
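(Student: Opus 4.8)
The plan is to dominate each ergodic average $A_k(\phi)$ by a noncommutative martingale up to an error that decays geometrically by virtue of condition (ii), and then to feed the martingale part into Cuculescu's weak-type $(1,1)$ maximal inequality. First I would replace the ball-averaging operators by dyadic-martingale averages: in a doubling metric measure space there is, by the construction of adjacent dyadic systems, a finite family $\CD^1,\dots,\CD^{\kappa}$ of partitions of $G$ such that every ball $V(x,r_k)$ is comparable to an atom of one of these systems, with $\kappa$ bounded in terms of the doubling constant. Each $\CD^i$ gives an increasing filtration and hence averaging functionals $\CE^i_k(\phi)(x)=\frac{1}{\mu(Q)}\int_{Q}\phi(\alpha_s(x))\,d\mu(s)$ over the containing atom $Q$, and the family $\{\CE^i_k(\phi)\}_k$ behaves as a martingale in $k$. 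The index set $[\kappa]$ and the family of projections in the statement originate exactly from these $\kappa$ systems.

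Second, using condition (ii) I would establish the norm estimate $\norm{A_k(\phi)-\CE^i_k(\phi)}\le C(3/4)^k\norm{\phi}$ for the appropriate system $i$: the annulus bound guarantees that the symmetric difference between $V(x,r_k)$ and the containing atom has relative measure at most $C'(3/4)^k$, so the discrepancy between the ball average and the martingale value is geometrically small. Summing over $k$ keeps the total discrepancy $\sum_k\norm{A_k(\phi)-\CE^i_k(\phi)}$ finite and bounded by $C\delta\norm{\phi}$; this is precisely the source of the term $C\delta\norm{\phi}\norm{x}$ in (b), the genuinely maximal behaviour being carried by the martingale while the ergodic-average/martingale difference is absorbed in operator norm.

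Third, I would invoke a Calder\'on-type transference principle to move the scalar Hardy--Littlewood maximal inequality for balls on the doubling space $G$ to the dynamical system $(\CM,\rho,\alpha)$, and then apply Cuculescu's construction to each martingale $\{\CE^i_k(\phi)\}_k$ to produce the projections $e^{i,\delta,\ups}_{n,\eps}$ satisfying the distributional bound (a). Since $\rho$ is not assumed tracial, this is where the parameter $\ups$ enters: through a Haagerup-reduction / approximate-trace argument one replaces $\rho$ by a nearby functional for which the noncommutative martingale maximal inequality is available, at the cost of the additive $\ups$ and the factor $(1+\ups)$ in (a). Assembling the Cuculescu bound with the summed error estimate of the second step then yields (b) on each $e^{i,\delta,\ups}_{n,\eps}\CM_+e^{i,\delta,\ups}_{n,\eps}$.

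For part (2), with $\tau$ tracial the parameter $\ups$ is no longer needed and the estimate of part (1) reduces to a clean weak-type maximal inequality. I would first apply the mean ergodic theorem for the amenable group $G$ (the balls $V_{r_k}$ being F\o lner by condition (ii)) to obtain $L^1$-convergence $A_k(\phi)\to\bar\phi$, where $\bar\phi$ is the projection of $\phi$ onto the $\alpha$-invariant subspace, and observe that on the dense subspace spanned by invariant functionals and coboundaries b.a.u. convergence holds by inspection. A noncommutative Banach principle, fed by the maximal inequality, then upgrades this dense convergence to b.a.u. convergence of $A_k(\phi)$ to $\bar\phi$ for every $\phi\in\CM_*$. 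The main obstacle I anticipate lies in the third step: conditional expectations adapted to a non-tracial state need not be state-preserving in the naive sense, so making the transference and Cuculescu construction compatible with the $\ups$-approximation, while keeping the operator-norm error term of the second step uniform in $\norm{x}$, is the delicate analytic core of the argument.
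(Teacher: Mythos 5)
Your architecture for part (2) (mean ergodic theorem along the lacunary sequence, a dense subclass of coboundaries plus invariant functionals, then a Banach principle) matches the paper, and you correctly identify the Hyt\"onen--Kairema adjacent dyadic systems as the source of $\kappa$ and the transference to $L^\infty(G)\otimes\CM$ over a F\o lner set as the bridge to the dynamical system. But the analytic core of part (1) is misidentified in two places, and these are genuine gaps rather than presentational differences.

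First, the error term $C\delta\norm{\phi}\norm{x}$ does not come from comparing $A_k(\phi)$ with the dyadic average via condition (ii). In the paper the ball average is merely \emph{dominated} by the dyadic average, $A_{r}(f)(s)\le c_1c_2\,E^i_{k_r}(f)(s)$, with a multiplicative constant and no additive error; condition (ii) on the lacunary sequence is not used in part (1) at all (it enters only through the mean ergodic theorem and the dense-subset lemma in part (2)). Your proposed estimate $\norm{A_k(\phi)-\CE^i_k(\phi)}\le C(3/4)^k\norm{\phi}$ is not available: the atom containing $V(x,r_k)$ is only comparable to the ball in measure, not close to it in symmetric difference, so the discrepancy is of size $O(1)$, not geometrically small, and cannot be summed.

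Second --- and this is exactly the step you flag as the ``delicate analytic core'' --- the weak-type inequality for the dyadic conditional expectations is not obtained from Cuculescu's construction plus a Haagerup reduction. The paper explicitly observes that Cuculescu's argument does not transfer to the state-preserving setting. Instead it exploits that the relevant filtration is \emph{decreasing} and invokes Neveu's theorem (Theorem \ref{dang-njoc lemma}): a decreasing sequence of $\rho$-preserving conditional expectations $(\CE_n)$ can be approximated, with total operator-norm error at most $\delta$, by Ces\`aro averages $\frac{1}{q_n}\sum_{k}\CT^k$ of a single $\rho$-preserving Markov operator $\CT=\sum_n(a_{n+1}-a_n)\CE_n$. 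The single-operator maximal lemma (Theorem \ref{maximal lemma-1}), valid for an arbitrary faithful normal positive functional, then produces the projections, and the norm-$\delta$ remainder of Neveu's approximation is precisely the origin of the term $\delta\norm{\phi}\norm{x}$ in (b). The parameter $\ups$ likewise has nothing to do with approximating the state by a trace: it arises from amenability, via the F\o lner condition $\mu(KF)/\mu(F)<1+\ups$ in the transference and from choosing a point $s\in F$ at which $\rho(1-e(s))$ is within $\ups$ of its infimum over $F$. Without Neveu's theorem, or an explicit substitute for the martingale maximal inequality in the non-tracial setting, your third step remains unproved.
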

 We note that compared to our previous maximal inequality obtained in \cite{Bik-Dip-neveu} for the polynomial growth group, this maximal inequality is much more intricate due to the dependency of the projection on more variables and for the error term ($C \delta \norm{ \phi } \norm{ x}  $   \ref{main thm}(b)). 

Now, we describe the layout of the article. In the next section, we recollect some basics of von Neumann algebra and the linear functionals defined in it. Further, we define non-commutative $L^1$-spaces and discuss b.a.u topology and end the section with the definition of non-commutative dynamical systems and predual actions. \S \ref{Weak type inequality} contains the proof of part $(1)$ of Theorem \ref{main thm}. \S \ref{Pointwise ergodic theorems} is devoted to the proof of pointwise ergodic theorem associated with the ball averages corresponding to a lacunary sequence, which proves part $(2)$ of Theorem \ref{main thm}. We also deduce an appropriate Banach principle in this setup. In the final section, we collect the results from \S \ref{Weak type inequality} and \S \ref{Pointwise ergodic theorems} and combine them with the Neveu decomposition result from \cite{Bik-Dip-neveu} to obtain a Stochastic ergodic theorem in this setup.

	\section{Preliminaries}
	Throughout this article, $\CM$ will denote a von Neumann algebra represented in a standard form on a separable Hilbert space $\CH$.  The norm on $\CM$ induced from $\CB(\CH)$ will be denoted by $\norm{\cdot}$. $\CM'$ will denote the commutant of $\CM$. We will denote by $\CP(\CM)$, the lattice of projections in the von Neumann algebra $\CM$ and $\CP_0(\CM):= \CP(\CM) \setminus \{0\}$.

	The predual of the von Neumann algebra is a closed linear subspace of $\CM^*$, the Banach dual of $\CM$, and is denoted by $\CM_*$. The elements of $\CM_*$ are actually the $w$-continuous linear functionals on $\CM$. Moreover, the norm on $\CM_*$ will be denoted by $\norm{\cdot}_1$. A linear functional $\phi \in \CM^*$ is called positive if $\phi(x^*x)\geq 0$ for all $x \in \CM$, and $\phi$ is called self-adjoint if $\phi(x)=\overline{\phi(x^*)}$ for all $x \in \CM$. The positive and self-adjoint elements of $\CM_*$ will be denoted by $\CM_{*+}$ and $\CM_{*s}$ respectively.
	
	\subsection{\textbf{Non-commutative $L^1$-spaces}}
	Let $\CM \subseteq \CB(\CH)$ be a von Neumann algebra equipped with a faithful, normal, semifinite (f.n.s.) trace $\tau$. A ( possibly unbounded ) operator $X: \CD(X) \subseteq \CH \to \CH$ which is affiliated to $\CM$ ( that is, $u'^* X u'= X$ for all unitary operator $u'$ in $\CM'$), closed and densely defined, will be denoted by $X \eta \CM$. Now for $X \eta \CM$, it is called $\tau$-measurable if for all $\epsilon>0$ there exists $e \in \CP(\CM)$ such that $\tau(1-e) < \epsilon$ and $e \CH \subseteq \CD(X)$ and one defines the set
	\begin{align*}
		L^0(\CM, \tau):= \{X \eta \CM: X \text{ is } \tau\text{-measurable}\}.
	\end{align*}
	
	Notice that $L^0(\CM, \tau)$ is a $*$-algebra with respect to adjoint operation, strong sum ( that is $X+Y:= \overline{X+Y}$ ) and strong product ( that is $X \cdot Y:= \overline{X \cdot Y}$ ), where $\overline{X}$ denote the closure of the operator $X$ for all $X \in L^0(\CM, \tau)$. For more information, we refer to \cite{stratila2019lectures}, \cite{hiai2021lectures}.
	
	There are several topologies on $L^0(\CM, \tau)$. Two of them, which are relevant to this article, are discussed in the following. The measure topology is given by the neighbourhood system $\{X + \CN(\epsilon, \delta): X \in L^0(\CM, \tau), \epsilon, \delta>0\}$, where
	\begin{align*}
		\CN(\epsilon, \delta):= \{X \in L^0(\CM, \tau): \exists~ e \in \CP(\CM) \text{ such that } \tau(1-e)< \delta, \text{ and } \norm{eXe} < \epsilon\}.
	\end{align*}
	
	Clearly, a net $\{X_i\}_{i \in I}$ in $L^0(\CM, \tau)$ converges in measure topology to $X \in L^0(\CM, \tau)$ if for all $\epsilon, \delta>0$ there exists $i_0 \in I$ such that for all $i \geq i_0$ there exists $e_i \in \CP(\CM)$ with $\tau(1-e_i)< \delta$ and
	\begin{align*}
		\norm{e_i(X_i - X)e_i}< \epsilon.
	\end{align*} 
	
	Note that it follows from \cite[Theorem 1]{nelson1974notes}, that $L^0(\CM, \tau)$ is complete with respect to the measure topology. Moreover, $L^0(\CM, \tau)$ becomes a metrizable Hausdorff space with respect to this topology in which $\CM$ is dense. On the other hand, we have the following definition for bilateral almost uniform topology.
	
	\begin{defn}\label{bau conv defn}
		A net $\{X_i\}_{i \in I}$ in $L^0(\CM, \tau)$ converges in bilateral almost uniform (b.a.u.) topology to $X \in L^0(\CM, \tau)$ if for all $\epsilon>0$ there exists $e \in \CP(\CM)$ with $\tau(1-e)< \delta$  such that 
		\begin{align*}
			\lim_i \norm{e(X_i - X)e}=0.
		\end{align*} 
	\end{defn}
	
	We have the following result from \cite[Theorem 2.2]{litvinov2024notes}, which is important to our purpose.
	
	\begin{thm}\label{complete wrt bau}
		The space $L^0(\CM, \tau)$ is complete with respect to b.a.u. topology.
	\end{thm}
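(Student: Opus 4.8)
The plan is to show that every b.a.u.\ Cauchy sequence in $L^0(\CM, \tau)$ converges b.a.u.; since the measure topology on $L^0(\CM,\tau)$ is metrizable, it suffices to argue with sequences rather than nets. Call a sequence $\{X_n\}$ \emph{b.a.u.\ Cauchy} if for every $\epsilon > 0$ there is $e \in \CP(\CM)$ with $\tau(1-e) < \epsilon$ such that $e\CH \subseteq \bigcap_n \CD(X_n)$ and $\{e X_n e\}_n$ is Cauchy in the operator norm $\norm{\cdot}$. The common reducing domain can always be arranged by intersecting the measurability projections of the individual $X_n$: choosing $e_n$ with $\tau(1-e_n) < \epsilon/2^{n}$ and $e_n\CH \subseteq \CD(X_n)$, the projection $e := \bigwedge_n e_n$ satisfies $\tau(1-e) < \epsilon$ and $e\CH \subseteq \CD(X_n)$ for all $n$, so that each compression $eX_ne$ is bounded and lies in $\CM$.

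First I would check that b.a.u.\ Cauchy implies Cauchy in the measure topology. Indeed, given $\epsilon, \delta > 0$, pick $e$ with $\tau(1-e) < \delta$ as above; since $\{eX_ne\}$ is norm-Cauchy, there is $N$ with $\norm{e(X_n - X_m)e} < \epsilon$ for all $n, m \geq N$, which is exactly the statement that $X_n - X_m \in \CN(\epsilon, \delta)$. By Nelson's completeness theorem (quoted from \cite{nelson1974notes} above), the sequence then has a measure limit $X \in L^0(\CM, \tau)$.

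It remains to upgrade measure convergence to b.a.u.\ convergence, and this is where the argument really happens. Fix $\epsilon > 0$ and choose the associated projection $e$ with $\tau(1-e) < \epsilon$ for which $\{eX_ne\}$ is norm-Cauchy; since $\CM$ is norm closed, $eX_ne \to Y$ in $\norm{\cdot}$ for some $Y \in e\CM e$. The key identification is $Y = eXe$. For this I would use that left and right multiplication by the fixed bounded operator $e$ are continuous for the measure topology, so that $X_n \to X$ in measure forces $eX_ne \to eXe$ in measure; on the other hand the norm convergence $eX_ne \to Y$ also yields $eX_ne \to Y$ in measure, and since the measure topology is Hausdorff the two limits coincide, giving $eXe = Y$. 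Consequently $\norm{e(X_n - X)e} = \norm{eX_ne - Y} \to 0$, and as $\epsilon$ was arbitrary this is precisely b.a.u.\ convergence $X_n \to X$.

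I expect the main obstacle to be this final identification $Y = eXe$, i.e.\ the bookkeeping needed to pass between the operator-norm limit of the compressions and the measure limit of the $X_n$. One must justify the continuity of the two-sided compression $T \mapsto eTe$ in the measure topology and carefully handle the unbounded, merely $\tau$-measurable nature of the $X_n$, so that the compressions $eX_ne$ are genuinely bounded elements of $\CM$ to which norm completeness applies. Once the Hausdorff uniqueness of measure limits is in hand, the remaining steps are routine.
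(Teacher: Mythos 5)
The paper does not actually prove this statement: it is imported verbatim as \cite[Theorem 2.2]{litvinov2024notes}, so there is no in-paper argument to compare yours against. Your proposal is correct and is essentially the standard proof behind the cited result: (i) a b.a.u.\ Cauchy sequence is Cauchy in measure, (ii) Nelson's completeness theorem for the measure topology produces a limit $X \in L^0(\CM,\tau)$, and (iii) for the projection $e$ witnessing the Cauchy condition, the compressions $eX_ne$ converge in operator norm to some $Y\in e\CM e$, which must equal $eXe$ because $T\mapsto eTe$ is continuous for the measure topology (part of Nelson's theorem that $L^0(\CM,\tau)$ is a topological $*$-algebra) and that topology is Hausdorff. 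The two points that genuinely require care are exactly the ones you flag: the condition $e\CH\subseteq\CD(X_n)$ forces $X_ne$, hence $eX_ne$, to be a bounded element of $\CM$ by the closed graph theorem; and after shrinking the Cauchy projection $e'$ to $e=e'\wedge\bigwedge_n e_n$ to absorb the measurability projections, the compressions remain norm-Cauchy since $\norm{e(X_n-X_m)e}\le\norm{e'(X_n-X_m)e'}$, while $\tau(1-e)$ is still controlled by subadditivity of $\tau$ on projections. I see no gap; your write-up would serve as a self-contained proof in place of the citation.
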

	
	Let us also observe the following.
	
	\begin{prop}
		Suppose $\{X_i\}_{i \in \I}$ and $\{Y_i\}_{i \in \I}$ are two nets in $L^0(\CM, \tau)$ such that $\{X_i\}_{i \in \I}$ converges in measure (resp. b.a.u.) to $X$ and $\{Y_i\}_{i \in \I}$ converges in measure (resp. b.a.u.) to $Y$ . Then, for all $c \in \C$, $\{cX_i +Y_i\}_{i \in I}$ converges in measure (resp. b.a.u.) to $cX + Y$.
	\end{prop}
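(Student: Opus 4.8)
The plan is to decouple the statement into two independent facts---that each mode of convergence is preserved under scalar multiplication and under addition---and then recover the proposition by applying the additive statement to the nets $\{cX_i\}_{i \in \I}$ and $\{Y_i\}_{i \in \I}$, having first observed that $\{cX_i\}_{i \in \I}$ converges (in the relevant topology) to $cX$.

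Scalar multiplication is the easy half. If $c = 0$ there is nothing to check, so suppose $c \neq 0$. In the measure case, given $\eps, \delta > 0$ I would invoke the hypothesis on $\{X_i\}_{i \in \I}$ with tolerance $\eps/\abs{c}$, producing for large $i$ projections $e_i$ with $\tau(1-e_i) < \delta$ and $\norm{e_i(X_i - X)e_i} < \eps/\abs{c}$; then $\norm{e_i(cX_i - cX)e_i} = \abs{c}\,\norm{e_i(X_i - X)e_i} < \eps$, as required. In the b.a.u. case, given $\delta > 0$ the same projection $e$ furnished by the hypothesis works, since $\lim_i \norm{e(cX_i - cX)e} = \abs{c}\lim_i \norm{e(X_i - X)e} = 0$.

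The substance lies in additivity. Suppose $\{U_i\}_{i \in \I} \to U$ and $\{V_i\}_{i \in \I} \to V$ in measure. Fixing $\eps, \delta > 0$, I would choose for large $i$ projections $f_i, g_i \in \CP(\CM)$ with $\tau(1-f_i) < \delta/2$, $\tau(1-g_i) < \delta/2$, $\norm{f_i(U_i - U)f_i} < \eps/2$ and $\norm{g_i(V_i - V)g_i} < \eps/2$, and set $e_i := f_i \wedge g_i$. Two ingredients then close the argument: first, the trace is sublinear on the projection lattice, so that $\tau(1 - f_i \wedge g_i) = \tau\big((1-f_i) \vee (1-g_i)\big) \leq \tau(1-f_i) + \tau(1-g_i) < \delta$, which follows from $\tau(p \vee q) + \tau(p \wedge q) = \tau(p) + \tau(q)$; and second, compression is monotone, since $e_i \leq f_i$ gives $e_i = f_i e_i$ and hence $\norm{e_i(U_i - U)e_i} = \norm{e_i f_i (U_i - U) f_i e_i} \leq \norm{f_i(U_i - U)f_i}$, with the analogous bound via $g_i$. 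The triangle inequality then yields $\norm{e_i\big((U_i - U) + (V_i - V)\big)e_i} < \eps$, proving $U_i + V_i \to U + V$ in measure. The b.a.u. case runs identically: fix $\delta > 0$, take fixed projections $f, g$ with $\lim_i \norm{f(U_i - U)f} = 0$ and $\lim_i \norm{g(V_i - V)g} = 0$, put $e := f \wedge g$, and pass to the limit in $i$.

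I expect the one genuinely delicate point to be the bookkeeping behind the compression identity, since the elements of $L^0(\CM, \tau)$ are generally unbounded and the sum is the strong sum $\overline{(U_i - U) + (V_i - V)}$. Here the choice $e_i = f_i \wedge g_i$ is exactly what is needed: as $e_i \leq f_i$ and $e_i \leq g_i$, one has $e_i\CH \subseteq \CD(U_i - U) \cap \CD(V_i - V) \subseteq \CD\big((U_i - U) + (V_i - V)\big)$, and on this common domain the strong sum agrees with the algebraic sum, so that $e_i \overline{(U_i - U) + (V_i - V)} e_i = e_i(U_i - U)e_i + e_i(V_i - V)e_i$ as bounded operators. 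Once this identity is secured the triangle inequality finishes the proof, and no further analytic input is required.
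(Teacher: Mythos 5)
Your proof is correct, and it is complete on the one point that actually needs care (the domain bookkeeping for the strong sum, which your choice $e_i = f_i \wedge g_i$ handles properly via $e_i\CH \subseteq f_i\CH \cap g_i\CH$). The paper states this proposition without proof, so there is nothing to compare against; your argument --- scalar case by rescaling $\eps$, additive case via the Kaplansky identity $\tau(p\vee q)+\tau(p\wedge q)=\tau(p)+\tau(q)$ and monotonicity of compressions --- is exactly the standard one the authors are implicitly relying on.
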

	
	The trace $\tau$ on $M_+$ can be extended to $L^0(\CM, \tau)_+$ via
	\begin{align*}
		\tau(X):= \int_0^\infty \lambda d \tau(e_\lambda),
	\end{align*}
	where $X= \int_0^\infty \lambda d e_\lambda$ is the spectral decomposition of $X \in L^0(\CM, \tau)_+$. The non-commutative $L^1$-space associated to $(\CM, \tau)$ is defined as 
	\begin{align*}
		L^1(\CM, \tau):= \{X \in L^0(\CM, \tau): \tau(\abs{X})< \infty\},
	\end{align*}
	and $\norm{\cdot}_1:= \tau(\abs{\cdot})$ defines a norm on $L^1(\CM, \tau)$. Now, we state a few properties of $L^1(\CM, \tau)$, which will be recursively used in the subsequent sections. For proof of the facts, we refer to \cite[Chapter 4]{hiai2021lectures}.
	
	\begin{thm}\label{predual prop}
		\begin{enumerate}
			\item $L^1(\CM, \tau)$ is a Banach space with respect to the norm $\norm{\cdot}_1$.
			\item For $X \in L^1(\CM, \tau)$ and $y \in \CM$, $Xy,~yX \in L^1(\CM, \tau)$ and $\tau(Xy)= \tau(yX)$.
			\item We have $\CM_*= L^1(\CM, \tau)$ with respect to the correspondence $\phi \in \CM_* \leftrightarrow X \in L^1(\CM, \tau)$ given by $\phi(x)= \tau(Xx)$ for all $x \in \CM$. Furthermore, the above correspondence is a surjective linear isometry, which preserves the positive cone.
		\end{enumerate}
	\end{thm}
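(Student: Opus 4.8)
The plan is to build everything from the generalized singular value function (decreasing rearrangement) of a $\tau$-measurable operator, which reduces most of the estimates to their commutative counterparts. For $X \in L^0(\CM,\tau)$ set $\lambda_s(X)=\tau\big(\chi_{(s,\infty)}(\abs{X})\big)$ and let $\mu_t(X)=\inf\{s\ge 0:\lambda_s(X)\le t\}$ be its right-continuous decreasing rearrangement. The two facts I would establish first (following the Fack--Kosaki calculus) are the integral formula $\tau(\abs{X})=\int_0^\infty \mu_t(X)\,dt$, so that $\norm{X}_1=\int_0^\infty\mu_t(X)\,dt$, together with the submajorization estimate $\int_0^s \mu_t(X+Y)\,dt\le \int_0^s\mu_t(X)\,dt+\int_0^s\mu_t(Y)\,dt$ and the contraction bound $\mu_t(aXb)\le \norm{a}\,\norm{b}\,\mu_t(X)$ for $a,b\in\CM$. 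Granting these, part (1) follows quickly: homogeneity and definiteness of $\norm{\cdot}_1$ are immediate (faithfulness of $\tau$ gives $\norm{X}_1=0\Rightarrow X=0$), and the triangle inequality is the $s\to\infty$ limit of the submajorization inequality.

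For completeness in part (1) I would pass through the measure topology. Chebyshev's estimate $\lambda_s(X)\le \norm{X}_1/s$ shows that a $\norm{\cdot}_1$-Cauchy sequence is Cauchy in measure, hence by the cited completeness of $L^0(\CM,\tau)$ it converges in measure to some $X\in L^0(\CM,\tau)$. A Fatou-type inequality for $\tau$ along measure-convergent sequences, $\tau(\abs{X})\le\liminf_n\tau(\abs{X_n})$, then places $X$ in $L^1(\CM,\tau)$, and applying the same argument to the differences $X_n-X_m$ gives $\norm{X_n-X}_1\to 0$. Part (2) is then a direct consequence of the contraction bound: $\mu_t(Xy)\le \norm{y}\mu_t(X)$ and $\mu_t(yX)\le\norm{y}\mu_t(X)$ integrate to $\norm{Xy}_1,\norm{yX}_1\le\norm{y}\norm{X}_1<\infty$. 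The tracial identity $\tau(Xy)=\tau(yX)$ I would prove by density: it holds for $X\in\CM\cap L^1(\CM,\tau)$ by the trace property on $\CM$, and since the truncations $X_n=X\,\chi_{[0,n]}(\abs{X})$ lie in $\CM\cap L^1(\CM,\tau)$ and converge to $X$ in $\norm{\cdot}_1$, while $X\mapsto\tau(Xy)$ and $X\mapsto\tau(yX)$ are $\norm{\cdot}_1$-continuous (each bounded by $\norm{y}\norm{X}_1$), the identity passes to the limit.

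For part (3) I would define $\phi_X(x)=\tau(Xx)$; by part (2) this is linear with $\abs{\phi_X(x)}\le\norm{x}\norm{X}_1$. Normality follows because $\phi_X$ is the $\norm{\cdot}_1$-norm limit of the functionals $\phi_{X_n}$ attached to the bounded truncations $X_n\in\CM$, each of which is manifestly normal (for $X_n\ge 0$ one checks $\tau(X_n^{1/2}\,\cdot\,X_n^{1/2})$ is normal using normality of $\tau$, and the general case is a linear combination), and $\CM_*$ is norm-closed in $\CM^*$. Isometry is the two-sided estimate: the bound above gives $\norm{\phi_X}\le\norm{X}_1$, while testing against $w^*$, where $X=w\abs{X}$ is the polar decomposition, yields $\phi_X(w^*)=\tau(w\abs{X}w^*)=\tau(\abs{X})=\norm{X}_1$, hence equality. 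Positivity preservation is checked in both directions: if $X\ge 0$ then $\phi_X(y^*y)=\tau(X^{1/2}y^*yX^{1/2})\ge 0$, and conversely a positive $\phi_X$ forces $X=X^*$ and then $X\ge 0$ by testing against the spectral projections of the negative part of $X$.

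The one genuinely deep ingredient is surjectivity. Given $\phi\in\CM_{*+}$ one must produce $X\in L^1(\CM,\tau)_+$ with $\phi=\phi_X$, which is exactly a non-commutative Radon--Nikodym theorem for the faithful normal semifinite trace $\tau$; the general $\phi\in\CM_*$ then follows from the Jordan decomposition of a normal functional into positive parts. I expect this Radon--Nikodym step --- together with the verification that the resulting $X$ is genuinely $\tau$-measurable and integrable rather than merely affiliated --- to be the main obstacle, whereas parts (1) and (2) are essentially bookkeeping once the singular-value calculus and the measure-topology completeness are in place.
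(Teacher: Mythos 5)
The paper gives no proof of this statement: it is quoted as standard background with the citation ``for proof of the facts, we refer to \cite[Chapter 4]{hiai2021lectures}'', and your outline follows essentially the same route as that reference --- the generalized singular value calculus for the norm and bimodule estimates, completeness via Chebyshev and the measure topology, and the duality $\CM_*\cong L^1(\CM,\tau)$ via bounded truncations, the isometry test against the partial isometry of the polar decomposition, and the tracial Radon--Nikodym theorem for surjectivity. Your assessment that the Radon--Nikodym step is the one genuinely non-trivial ingredient is accurate; that is precisely what the paper's citation is carrying, so the proposal is correct and matches the intended (standard) proof.
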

	
	\subsection{Non-commutative dynamical systems}
	Throughout this article, $G$ is assumed to be an amenable, locally compact, second countable, Hausdorff group with right invariant Haar measure $\mu$. Let $(E, \norm{\cdot})$ be a real ordered Banach space.
	
	\begin{defn}\label{nc dyn sys}
		A non-commutative dynamical system is a triple $(E, G, \gamma)$, where $\gamma$ is a map from $G$ to $\CB(E)$, the space of bounded operators on $E$, satisfying $\gamma_s \circ \gamma_t= \gamma_{st}$ for all $s,t \in G$ and 
		\begin{enumerate}
			\item for all $a \in E$, the map $G \ni s \to \gamma_s(a) \in E$ is continuous, here we take norm topology on $E$ and $w^*$-topology when $E= \CM$,
			\item $\sup_{s \in G} \norm{\gamma_s} < \infty$, and, 
			\item $\gamma_s(a) \geq 0$ for all $a \geq 0$ and $s \in G$.
		\end{enumerate}
	\end{defn}
	
	Let $\CM$ be a von Neumann algebra with f.n.s trace $\tau$. Consider the non-commutative dynamical system $(L^1(\CM, \tau), G, \gamma)$. Since $\CM= (L^1(\CM, \tau)^*$, for every $s \in G$, we recall the dual of $\gamma_s$, denoted by $\gamma_s^*$, is defined by
	\begin{align}\label{dual trans}
		\tau(\gamma_s^*(x) Y)= \tau(x \gamma_s(Y)), ~x \in \CM, ~ Y \in L^1(\CM, \tau).
	\end{align}
	
	Furthermore, for the non-commutative dynamical system $(\CM, G, \beta)$, for all $s \in G$, the predual transformation of $\beta_s$, denoted by $\hat{\beta_s}$, is defined by 
	\begin{align}\label{predual trans}
		\tau(\beta_s(x) Y)= \tau(x \hat{\beta_s}(Y)), ~x \in \CM, ~ Y \in L^1(\CM, \tau).
	\end{align}
	Observe that in this case, we have $(\hat{\beta_s})^*= \beta_s$ for all $s \in G$.  Sometimes, we write the same notation for $\gamma_s$, and it's the predual map, and if we write $ \gamma_s(\phi)$ for $ \phi \in \CM_*$, then in this case, $\gamma_s $ is understood to be the predual map.

	\section{ \textbf{Weak type maximal  inequality} }\label{Weak type inequality}
	
	In this section, we will prove a weak type maximal inequalities for certain predual actions. Let $(\CM, G, \alpha)$ be a non-commutative dynamical system. In \cite{Bik-Dip-neveu}, the authors established a weak type maximal inequality for the ergodic averages associated with a state preserving action of a group of polynomial growth with a compact, symmetric generating set (\cite[see Theorem 4.13]{Bik-Dip-neveu}). 
	
In this section, we prove the maximal inequality when the group $G$ is  equipped with a metric $d$ (in this case, $d$ is also assumed to be a measurable function on $G \times G$) satisfying the doubling condition. We make use of appropriate Calderon's transference principle and a Hardy-Littlewood maximal inequality to obtain our result (cf. Theorem \ref{maximal ineq for averages}).

	Let $\CM$ be a von Neumann algebra with a f.n positive linear functional $\rho$  on $\CM$. Assume that $T: \CM \to \CM$ is a $w$-continuous positive linear map satisfying
	\begin{enumerate}
		\item $T(1)\leq 1$, and
		\item $\rho \circ T= \rho$.
	\end{enumerate}
	The associated dual map $T^*: \CM^* \to \CM^*$ keeps the predual $\CM_*$ invariant. Therefore, for every $n \in \N$, we can define a positive linear map $S_n : \CM_* \to \CM_*$ by
	\begin{equation}
		S_n(\nu):= \frac{1}{n} \sum_{k=0}^{n-1} (T^*)^k (\nu), ~ \nu \in \CM_*.
	\end{equation}
	
	\noindent For simplicity, we introduce the following notations. For $ m \in \N$, we write 
	\begin{enumerate}
		\item $ [m]:= \{ 1, 2, \cdots, m \} \subset \N$, and 
		\item $[-m \mathrel{{.}\,{.}}\nobreak m]:= \{ -m, -(m-1), \cdots, 0,  \cdots m-1, m \} \subset \Z$.
	\end{enumerate}
	Now, we recall the following result from \cite[Theorem 4.2]{Bik-Dip-neveu}, which we will use recursively in this section; here we include a slick proof. 
	
	\begin{thm}\label{maximal lemma-1}
		Let $\nu \in \CM_{*+}$ and also suppose that $\epsilon>0$ and $n \in \N$. Then there exists  $e_{n, \eps} \in \CP(\CM)$ such that
		\begin{enumerate}
			\item $\rho(1-e_{n, \eps}) < \frac{\norm{\nu}}{\epsilon }$ and
			\item $S_k(\nu)(x) \leq \epsilon \rho(x)$ for all $x \in (e_{n, \eps} \CM e_{n, \eps})_+$ and $k \in [n].$
		\end{enumerate}	
	\end{thm}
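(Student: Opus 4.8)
The plan is to first \emph{linearise the problem over $k$} by passing from the averages to the ergodic sums. Writing $\sigma_k(\cdot):=\sum_{j=0}^{k-1}(T^*)^j(\cdot)$ so that $S_k=\tfrac1k\sigma_k$, and using that $T^*\rho=\rho$ (the dual of $\rho\circ T=\rho$), one has $\sigma_k(\epsilon\rho)=k\epsilon\rho$. Hence, for any projection $e$ and $x\in(e\CM e)_+$,
\begin{align*}
S_k(\nu)(x)\le \epsilon\rho(x)\iff \sigma_k(\nu)(x)\le k\epsilon\rho(x)\iff \sigma_k(\mu_0)(x)\le 0,
\end{align*}
where $\mu_0:=\nu-\epsilon\rho\in\CM_{*s}$. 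Thus conclusion $(2)$ is equivalent to producing a projection $e$ with $\sigma_k(\mu_0)(x)\le 0$ for all $x\in(e\CM e)_+$ and all $k\in[n]$, while conclusion $(1)$ asks that the removed mass $\rho(1-e)$ be controlled by $\nu(1)=\norm{\nu}$ (using $\nu\ge0$), \emph{uniformly in $n$}. In this form the statement is precisely a non-commutative Hopf maximal ergodic inequality for the single self-adjoint functional $\mu_0$.

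To build $e$ I would run a finite stopping-time (Cuculescu-type) construction on the partial sums $\sigma_1(\mu_0),\dots,\sigma_n(\mu_0)$. Representing $\CM$ in standard form and realising each self-adjoint normal functional $\sigma_k(\mu_0)$ through the self-dual positive cone, let the first stopping projection cut off the spectral region where $\sigma_1(\mu_0)$ is positive, and proceed recursively by compressing to the current corner; this yields a decreasing chain $1\ge q_1\ge\cdots\ge q_n=:e$ in which $q_k$ forces $\sigma_k(\mu_0)\le0$ on $q_k\CM q_k$. Because the chain is decreasing, $e\le q_k$ for every $k$, so if $x\in(e\CM e)_+$ then $q_k x q_k=x$ and $x\in(q_k\CM q_k)_+$; hence $\sigma_k(\mu_0)(x)\le0$ descends from the corner $q_k\CM q_k$ to the smaller corner $e\CM e$. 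This is exactly what lets a single projection handle all $k\in[n]$ simultaneously, establishing part $(2)$.

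The delicate point — and the step I expect to be the main obstacle — is the bound $\rho(1-e)<\norm{\nu}/\epsilon$ \emph{independent of $n$}. A naive estimate controlling each cut $\rho(q_{k-1}-q_k)$ separately only gives the useless bound $n\norm{\nu}/\epsilon$, and the entire content of the maximal inequality is the cancellation that removes this factor of $n$. I would extract it from the telescoping identity
\begin{align*}
\sigma_k(\mu_0)-\sigma_{k-1}(\mu_0)=(T^*)^{k-1}\mu_0=(T^*)^{k-1}\nu-\epsilon\rho,
\end{align*}
combined with positivity of $T$, $T(1)\le1$, and $T^*\rho=\rho$: summing the one-step increments along the stopping construction, the $-\epsilon\rho$ contributions account exactly for the mass being removed, while the $(T^*)^{k-1}\nu$ terms telescope and are dominated, via $\norm{(T^*)^j\nu}=\nu(T^j1)\le\norm{\nu}$, by the single positive mass $\nu(1)$. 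The extra subtlety relative to the tracial Hopf argument is that $\rho$ is only a faithful normal state, so the positive parts and support projections of the $\sigma_k(\mu_0)$ must be produced in the standard form rather than by a scalar spectral cut of a density; as an alternative route one may identify $\nu$ with its density in Haagerup's $L^1(\CM)$, where the dual trace linearises $\rho$, and invoke Yeadon's weak-type maximal ergodic theorem directly, thereby reducing the state-preserving case to the semifinite tracial one.
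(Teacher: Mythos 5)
Your reduction to the Hopf form ($\sigma_k(\mu_0)(x)\le 0$ on a corner, with $\mu_0=\nu-\eps\rho$) is correct, and a stopping-time chain $1\ge q_1\ge\cdots\ge q_n=:e$ built from Jordan decompositions in successive corners does deliver conclusion (2). The genuine gap is exactly where you locate it: conclusion (1). Your construction yields, for $p_k:=q_{k-1}-q_k$, only the one-sided information $\sigma_k(\mu_0)(p_kxp_k)\ge 0$, hence $k\eps\rho(p_k)\le\sigma_k(\nu)(p_k)\le k\norm{\nu}$, i.e.\ the useless bound $\rho(1-e)\le n\norm{\nu}/\eps$. The proposed repair --- telescoping the increments $(T^*)^{k-1}\mu_0$ along the construction --- is precisely the step that the absence of a trace kills: the classical cancellation compares $\sigma_k(\mu_0)$ on $p_k$ with $T^*$ applied to a running maximum of the earlier partial sums, and recombines the contributions $(T^*)^{j}\nu(p_k\cdot p_k)$ over the mutually orthogonal $p_k$ into a single evaluation of $\nu$; both manoeuvres rest on $\tau(exe)=\tau(xe)$ or on a genuine lattice maximum, neither of which is available when $\rho$ is only a state. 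As written, no inequality relating $\sum_k\rho(p_k)$ (rather than $\sum_k k\,\rho(p_k)$) to $\nu(1)$ is actually derived. The fallback via Haagerup $L^1$ and Yeadon's theorem does not close this either: Yeadon's hypothesis is a subtracial, subunital map on a semifinite algebra, the extension of $T^*$ to the crossed product carrying the dual trace is not known to satisfy it, and the projections it would produce live in the crossed product rather than in $\CM$; the $p=1$ state-preserving endpoint lying outside the reach of Yeadon and Junge--Xu is precisely the difficulty this theorem is meant to overcome.

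For contrast, the paper proves (1) and (2) simultaneously by a variational (Garsia-type) argument rather than a stopping-time one: it maximizes the $\sigma$-weakly continuous functional
\begin{align*}
\kappa(\underline{x})=\sum_{k=1}^{n}k\Big[S_k(\nu/\eps)(x_k)-\rho(x_k)\Big]
\end{align*}
over the $\sigma$-weakly compact convex set $\{\,\underline{x}: x_i\ge 0,\ \sum_i x_i\le 1\,\}\subset\bigoplus_{1}^{n}\CM$. First-order optimality under the perturbation $x_m\mapsto x_m+c'$ with $0\le c'\le 1-\sum_i x_i$ gives conclusion (2) on the support $e$ of $1-\sum_i x_i$, while comparison with the shifted competitor $(T(x_2),\dots,T(x_n),0)$ produces the Hopf cancellation $\tfrac{1}{\eps}\nu(\sum_k x_k)\ge\rho(\sum_k x_k)$ and hence $\rho(1-e)\le\norm{\nu}/\eps$, with no trace required. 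If you wish to keep a constructive route you would need a substitute for this cancellation; in the state-preserving setting the variational formulation is the mechanism that supplies it.
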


\begin{proof}
    Consider the $w$-compact subset $\mathcal{L}$ of the von Neumann algebra $R:= \displaystyle \bigoplus_{k=1}^{n} M$ defined by
  	\begin{align*}
  	\mathcal{L}:= \Big\{ \underbar{x}:= (x_1,\ldots, x_n) \in R : x_i\geq 0 \  \text{ and } \sum_{i=1}^{n} x_i \leq 1 \Big\}.
  	\end{align*}

   Now consider the ultraweakly ( $\sigma$-weakly ) continuous function  $\kappa$ on $R$ defined by
  	\begin{align*}
  	\kappa(\underbar{x}):= \sum_{k=1}^{n}  k\Big[ S_k(\nu')(x_{k}) - \rho(x_{k}) \Big], \text{ for all } \underbar{x} \in R,
  	\end{align*}
   where $\nu':= \nu/\epsilon$. Since $\mathcal{L}$ is $\sigma$-weakly compact,  the supremum  value of the function $\kappa$ on the set $\mathcal{L}$ will be attained. Let $\underbar{x} \in \mathcal{L}$ be such that $\kappa(\underbar{x}) \geq \kappa (\underbar{a})$ for all $\underbar{a} \in \mathcal{L}$.

We define $c:= 1- \sum_{k=1}^{n} x_i $. Note that $c \geq 0$. Let $c'$ be an element in $M$ such that $0 \leq c' \leq c$. For $m \in \{ 1, \ldots ,n \}$, take $\underline{x}' = (x_1, \ldots, x_m+c', \ldots, x_n) $. Then note that $\underbar{x}' \in \mathcal{L}$, 
  	Then we have,
  	\begin{align*}
  	\kappa (\underbar{x}) \geq \kappa (\underbar{x}').
  	\end{align*}

Therefore, we deduce the following
  	\begin{align}\label{a}
  	\nonumber&  \kappa (\underbar{x}) - \kappa (\underbar{x}') \geq 0 \\
  	\nonumber \Rightarrow \qquad &\sum_{k=1}^{n} k\Big[ S_k(\nu')(x_{k}) - \rho(x_{k}) \Big] \\
  	\nonumber   - &\sum_{k=1}^{n} k \Big[ S_k(\nu')(x_{k}) - \rho(x_{k}) \Big]- m \Big[ S_m(\nu')(c') - \rho(c') \Big]\geq 0 \\
  	\nonumber \Rightarrow \qquad&  - m \Big[ S_m(\nu')(c') - \rho(c') \Big]   \geq 0  \\
  	\Rightarrow \qquad& S_m(\nu')(c') \leq \rho(c').
  	\end{align}

   Now consider the spectral projections $p_l:= \chi_{(\frac{1}{l}, \infty )}(c)$ of $c$ for all $l \in \N$. Now for $y \in \CM$ with $0 \leq y \leq 1$ write $y_l= p_l y p_l$ and observe that 
   \begin{align*}
       y_l \leq p_l \leq l c.
   \end{align*}

   Now since Eq. \ref{a} is true for all $m \in [n]$ and $0 \leq c' \leq c$, in particular, if we take $c'= \frac{y_l}{l}$, we obtain $S_m(\nu')(y_l) \leq \rho(y_l)$ which implies
   \begin{align}\label{b}
       S_m(\nu')(p_l y p_l) \leq \rho(p_l y p_l).
   \end{align}

   Furthermore, $ p_l \xrightarrow{k \ri \infty } e_{n, \eps} $ in $\sigma$-weak topology. Therefore, taking limit as $l \to \infty$ in Eq. \ref{b} we obtain, for all $m \in [n]$
   \begin{align}\label{b'}
       S_m(\nu')(e_{n, \eps} y e_{n, \eps}) \leq \rho(e_{n, \eps} y e_{n, \eps}).
   \end{align}

   Now, for $m=1$ and $y= \sum_{k=1}^{n} x_i$, we obtain 
   \begin{align}\label{c}
       \nu'(e_{n, \eps} \sum_{k=1}^{n} x_i) \leq \rho( e_{n, \eps} \sum_{k=1}^{n} x_i), ~ \text{since, }~  e_{n, \eps} (\sum_{k=1}^{n} x_i) e_{n, \eps} = e_{n, \eps} \sum_{k=1}^{n} x_i.
   \end{align}

   On the other hand, consider  the point $(T(x_2), \ldots, T(x_n), 0) \in R$. Since $T(1) \leq 1$, we note that  $(T(x_2), \ldots, T(x_n), 0) \in \mathcal{L}$.
  	Hence,  we have  
  	\begin{align*}
  	\kappa(\underbar{x}) \geq \kappa(T(x_2), \ldots, T(x_n), 0 ).
  	\end{align*}
  	From this we deduce the following
  	\begin{align*}
  	& \kappa(\underbar{x}) \geq \kappa( T(x_2), \ldots, T(x_n), 0 ) \\
  	\Rightarrow 	\qquad&  
  	\sum_{k=1}^{n}  k\Big[ S_k(\nu')(x_{k}) - \rho(x_{k}) \Big]\\
     &\geq \\
  	\qquad 	&\sum_{k=1}^{n}  k\Big[ S_k(\nu')(T(x_{k+1})) - \rho(T(x_{k+1})) \Big]\\
  	\Rightarrow 	\qquad&  
  	\sum_{k=1}^{n}  k\Big[ S_k(\nu')(x_{k}) - \rho(x_{k}) \Big]\\
  	&\geq \\
  	\qquad 	&\sum_{k=1}^{n}  k\Big[ S_k(T^*(\nu')(x_{k+1})) - \rho(x_{k+1}) \Big]\\
  	\Rightarrow \qquad 	&  \nu'( \sum_{k=1}^n x_k) \ge  \rho ( \sum_{ k =1}^n x_k).
  	\end{align*}

   Now, combined with Eq. \ref{c} we obtain
   \begin{align*}
       \nu'((1- e_{n, \eps}) \sum_{ k =1}^n x_k ) \geq \rho( (1- e_{n, \eps}) \sum_{ k =1}^n x_k ),
   \end{align*}
   which implies 
   \begin{align*}
       \norm{\nu'} \geq \rho( (1- e_{n, \eps})), ~ \text{ since } (1- e_{n, \eps}) \sum_{ k =1}^n x_k = 1- e_{n, \eps}.
   \end{align*}
   This completes the proof.
\end{proof}

	We note that to obtain our result, we must know the maximal inequality of an associated sequence of Martingales. In \cite{cuculescu1971martingales}, the authors obtain weak type maximal inequality for a sequence of $L^1$-bounded Martingales for a tracial von Neumann algebra. However, the techniques used in the proof of \cite[Proposition 5]{cuculescu1971martingales} or Theorem \ref{maximal lemma-1} are not useful to obtain similar inequalities for certain Martingales in the state preserving dynamical system. To overcome such difficulties we make use of the following result of Neveu (\cite{neveu1964deux}) [also see \cite[Lemma 2]{dang1979pointwise}].
	
	\begin{thm}\label{dang-njoc lemma}
		Let $\CM$ be a von Neumann algebra with a f.n. positive linear functional $\rho$. Also, suppose $ (\CM_n)_{n \in \N}$ be a decreasing sequence of von Neumann subalgebras of $ \CM$ equipped with a sequence of f.n. conditional expectations
		\begin{align*}
			\CE_n : \CM \to \CM_n \text{ with } \rho\circ \CE_n = \rho \text{ for all } n \in \N.
		\end{align*}
		If $ 0= a_1 < a_2 < \cdots <a_n <\cdots < 1 $ and $\lim a_n = 1$, then the operator 
		\begin{align*}
			\CT = \sum_{ n =1}^\infty( a_{n+1} -a_n )\CE_n 
		\end{align*}
		satisfies $\CT(1)=1$ and $\rho(\CT(a))= \rho(a)$ for all $a \in \CM$. Further,  for any given $\eps >0$,  there exists such a sequence $(a_n)$ and a sequence of integers $ q_1 < q_2 \cdots $, such that 
		\begin{align*}
			\sum_{ n=1 }^{\infty } \norm{ \frac{1}{q_n}\sum_{ k=0 }^{q_n}    \CT^k -\CE_n } \leq \eps .
		\end{align*}
	\end{thm}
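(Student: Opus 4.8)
The plan is to reduce everything to a scalar ``concentration'' statement by first finding a clean closed form for the powers of $\CT$. The first assertion is only a telescoping computation: since $a_1=0$ and $a_n\to1$ we have $\sum_{i\ge1}(a_{i+1}-a_i)=1$, and each $\CE_i$ is unital with $\rho\circ\CE_i=\rho$, so
\begin{align*}
\CT(1)=\sum_i(a_{i+1}-a_i)\CE_i(1)=1,\qquad \rho(\CT(a))=\sum_i(a_{i+1}-a_i)\rho(\CE_i(a))=\rho(a),
\end{align*}
the series converging in operator norm because the coefficients are nonnegative with sum $1$ and $\norm{\CE_i}\le1$. For the quantitative part, the key observation is that the $\CM_n$ are decreasing and the $\CE_n$ are $\rho$-preserving conditional expectations, so the tower property yields $\CE_i\CE_j=\CE_{\max(i,j)}$. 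Writing $b_i:=a_{i+1}-a_i$ and expanding $\CT^k=(\sum_i b_i\CE_i)^k$, I group the multi-indices $(i_1,\dots,i_k)$ by the value $M$ of their maximum; using $\sum_{i\le M}b_i=a_{M+1}$, the coefficient of $\CE_M$ becomes $a_{M+1}^k-a_M^k$, giving for every $k\ge1$ the identity $\CT^k=\sum_{M\ge1}(a_{M+1}^k-a_M^k)\CE_M$.

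Averaging and isolating the $k=0$ term, I then obtain for each $q\in\N$
\begin{align*}
\frac1{q}\sum_{k=0}^{q}\CT^k=\frac1q\,\id+\sum_{M\ge1}\beta_M\,\CE_M,\qquad \beta_M:=\frac1q\sum_{k=1}^{q}\bigl(a_{M+1}^k-a_M^k\bigr).
\end{align*}
Here $\beta_M\ge0$ because $a_M\le a_{M+1}$, and the telescoping identity $\sum_M(a_{M+1}^k-a_M^k)=1$ gives $\sum_M\beta_M=1$; thus $(\beta_M)_M$ is a probability distribution on the index set. This is the crux, since it turns the approximation of $\CE_n$ into a matter of concentrating this distribution at $M=n$. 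Writing $\beta_M^{(n)}$ for the coefficients with $q=q_n$ and $R_n:=\frac1{q_n}\sum_{k=0}^{q_n}\CT^k-\CE_n$, the $M=n$ term cancels and the triangle inequality with $\norm{\CE_M-\CE_n}\le2$ yields
\begin{align*}
\norm{R_n}\le\frac1{q_n}+\Bigl\|\sum_{M\ne n}\beta_M^{(n)}(\CE_M-\CE_n)\Bigr\|\le\frac1{q_n}+2\bigl(1-\beta_n^{(n)}\bigr).
\end{align*}

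It remains to show $\beta_n^{(n)}$ can be made close to $1$. Estimating $\beta_n^{(n)}=\frac1{q_n}\sum_{k=1}^{q_n}(a_{n+1}^k-a_n^k)$ by Bernoulli's inequality $a_{n+1}^k\ge1-k(1-a_{n+1})$ on the first sum and the geometric bound $\sum_{k\ge1}a_n^k\le(1-a_n)^{-1}$ on the second gives
\begin{align*}
1-\beta_n^{(n)}\le\frac{q_n+1}{2}\,(1-a_{n+1})+\frac1{q_n(1-a_n)}.
\end{align*}
Writing $t_n:=1-a_n$, this forces me to choose $q_nt_n$ large but $q_nt_{n+1}$ small simultaneously, so $t_n$ must decrease super-geometrically. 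Given $\eps>0$ I construct the sequences by induction: set $t_1=1$; having fixed $t_n>0$, choose an integer $q_n>q_{n-1}$ with $q_n\ge2^n/\eps$ and $q_nt_n\ge2^n/\eps$ (so $\tfrac1{q_n},\tfrac1{q_nt_n}\le\eps2^{-n}$), and then pick $t_{n+1}\in(0,t_n)$ small enough that $\tfrac{q_n+1}{2}t_{n+1}\le\eps2^{-n}$. Each of the three terms is then $\le\eps2^{-n}$, so $\norm{R_n}\le5\eps2^{-n}$ and $\sum_n\norm{R_n}\le5\eps$; replacing $\eps$ by $\eps/5$ at the outset gives the claimed bound. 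By construction $(t_n)$ decreases to $0$, so $a_n=1-t_n$ increases to $1$ with $a_1=0$, and $(q_n)$ is increasing, as required.

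I expect the main obstacle to be precisely this last quantitative interleaving: the two requirements ``$q_nt_n$ large'' and ``$q_nt_{n+1}$ small'' are in tension, and making them coexist while keeping both sequences monotone (and the error summable over $n$) is the delicate point. By contrast, the algebraic reduction via $\CT^k=\sum_M(a_{M+1}^k-a_M^k)\CE_M$ and the recognition that $(\beta_M^{(n)})_M$ is a probability distribution are the decisive simplifications that make the estimate elementary rather than requiring summation-by-parts or spectral arguments.
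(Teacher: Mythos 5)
Your proof is correct. Note, however, that the paper does not prove this statement at all: it is imported verbatim as a known result of Neveu (\cite{neveu1964deux}, see also \cite[Lemma 2]{dang1979pointwise}), so there is no in-paper argument to compare against. Your reconstruction is a valid self-contained proof and follows the natural (essentially Neveu's) route. The decisive algebraic input is the tower identity $\CE_i\CE_j=\CE_{\max(i,j)}$, which does hold here because the $\rho$-preserving conditional expectation onto a fixed subalgebra is unique when $\rho$ is faithful (for $y\in\CM_j$ one has $\rho(yE(x))=\rho(E(yx))=\rho(yx)$, and faithfulness pins down $E(x)$); you invoke this without comment, and it is worth a line since it is the only place the hypothesis $\rho\circ\CE_n=\rho$ with $\rho$ faithful is genuinely used in the quantitative part. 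From there the closed form $\CT^k=\sum_M(a_{M+1}^k-a_M^k)\CE_M$, the observation that the Ces\`aro coefficients $\beta_M^{(n)}$ form a probability distribution, the bound $\norm{R_n}\le q_n^{-1}+2(1-\beta_n^{(n)})$, and the Bernoulli/geometric estimates are all correct, and your interleaved induction (choose $q_n$ after $t_n$, then $t_{n+1}$ after $q_n$) resolves the tension between ``$q_nt_n$ large'' and ``$q_nt_{n+1}$ small'' while keeping $(a_n)$ strictly increasing to $1$ with $a_1=0$ and $(q_n)$ strictly increasing; the resulting $\sum_n\norm{R_n}\le 5\eps$ with the final rescaling closes the argument.
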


	A metric measure space $(X,d, \mu)$ is a metric space $(X, d)$ equipped with a radon measure $\mu$ satisfying $0< \mu(V(s,r))< \infty$ for all $s \in X$ and $r>0$, where $V(s,r):= \{ t \in X: d(s,t)\leq r\}$. The measure $\mu$ is said to satisfy the \textit{doubling condition} if there exists an universal constant $c_1>0$ such that 
	\begin{align}\label{doub cond-defn}
		\mu(V(s, 2r)) \leq c_1 \mu(V(s,r)) ~\text{for all } s \in X \text{ and } r>0.
	\end{align}

	We now recall the following result from \cite[Corllary 7.4]{hytonen2012systems} (also cf. \cite[Lemma 4.2]{Hong2021}), which acts as one of the main tools to obtain our maximal inequality.
	
	\begin{lem}\label{Hytonen partition}
		Let $(X,d, \mu)$ be a metric measure space satisfying the doubling condition. Then there exists a finite collection of families $\CP^1,\ldots, \CP^\kappa$ , where $ \kappa \in \N$ and  each $\CP^i:= (\CP^i_n) $ is a sequence of partitions of $X$ such that the following are true:
		\begin{enumerate}
			\item for each $ i \in [\kappa]$ and  $n \in \Z$, the partition $\CP^i_{n}$ is a refinement of $\CP^i_{n+1}$;
			\item there is a constant $c_2>0$ such that, for all $s \in X$ and $r>0$, there exists $ i \in [\kappa]$, $n \in \Z$, and $Q \in \CP^i_n$ such that 
			\begin{align*}
				V(s,r) \subset Q,  ~ \text{and }~ \mu(Q) \leq c_2 \mu(V(s,r)).
			\end{align*}
		\end{enumerate}
	\end{lem}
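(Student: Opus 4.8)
The plan is to reproduce the construction of a \emph{system of dyadic cubes} on a doubling metric measure space, following Christ's original construction of a single dyadic lattice and then the adjacent-system refinement that produces the finitely many families $\CP^1,\ldots,\CP^\kappa$. Fix once and for all a small parameter $\delta \in (0,1)$ whose admissible range will depend only on the doubling constant $c_1$ in \eqref{doub cond-defn}.

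First I would construct a single dyadic family. For each generation $n \in \Z$ choose a maximal $\delta^n$-separated set of centers $\{z^n_\alpha\}_\alpha \subseteq X$; maximality forces the balls $V(z^n_\alpha, \delta^n)$ to cover $X$ while the balls $V(z^n_\alpha, \delta^n/2)$ are pairwise disjoint. Next assign to each center $z^n_\beta$ a unique ``parent'' among the centers at generation $n+1$ by taking a nearest such center, which organizes the centers into a tree. One then defines the cubes $Q^n_\alpha$ by a standard measurable selection respecting this parent--child tree, so that at each fixed $n$ the family $\{Q^n_\alpha\}_\alpha$ partitions $X$, each $Q^n_\beta$ is contained in exactly one $Q^{n+1}_\alpha$ (yielding the refinement property (1)), and each cube is trapped between comparable balls,
\[
V(z^n_\alpha, c\,\delta^n) \subseteq Q^n_\alpha \subseteq V(z^n_\alpha, C\,\delta^n),
\]
for geometric constants $c, C$ depending only on $\delta$ and $c_1$. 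The inner/outer ball bounds are proved by an induction over generations together with the separation and covering properties of the nets.

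A single such family does not yet give property (2): an arbitrary ball $V(s,r)$ with $\delta^{n+1} < r \le \delta^n$ may straddle the boundary between several cubes of generation $n$, so it need not sit inside any one cube of comparable measure. To repair this I would build finitely many families by perturbing the choice of centers. The key geometric fact---the metric-space analogue of the Euclidean ``$1/3$-trick''---is that one can select $\kappa$ collections of nets, with $\kappa$ depending only on $c_1$, so that for every $s \in X$ and every $r>0$ at least one of the resulting families contains a cube $Q$ at the generation $n$ with $\delta^n \sim r$ whose center lies close enough to $s$ that $V(s,r) \subseteq Q$. The measure comparison then follows from the ball sandwich: $Q \subseteq V(z, C\delta^n)$ while $V(s,r) \supseteq V(z', c'\delta^n)$ for comparable radii $\delta^n \sim r$, so iterating the doubling inequality \eqref{doub cond-defn} a bounded number of times yields $\mu(Q) \le c_2\,\mu(V(s,r))$ with $c_2$ depending only on $c_1$.

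The step I expect to be the main obstacle is the construction of the adjacent families and the proof that $\kappa$ can be taken to depend only on the doubling constant. The nesting and inner/outer ball estimates for a single family are by now routine, but controlling the boundary regions simultaneously across all scales, and showing that finitely many shifted grids cover every ball by a comparably sized cube, requires the delicate combinatorial count of Hyt\"onen--Kairema: the doubling condition bounds the number of generation-$n$ cubes meeting a fixed ball, and a packing/pigeonhole argument over these translates into the uniform bound on $\kappa$.
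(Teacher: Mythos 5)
The paper offers no proof of this lemma at all: it is imported verbatim from Hyt\"onen--Kairema \cite[Corollary 7.4]{hytonen2012systems}, so there is nothing internal to compare your argument against. Judged on its own terms, your proposal correctly identifies the standard route --- Christ-type dyadic cubes built from maximal $\delta^n$-separated nets, a parent--child tree, the ball sandwich $V(z^n_\alpha, c\,\delta^n) \subseteq Q^n_\alpha \subseteq V(z^n_\alpha, C\,\delta^n)$, and then the adjacent-systems refinement to handle balls that straddle cube boundaries. The measure estimate in part (2) from the ball sandwich plus finitely many applications of the doubling inequality is also right.

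However, as a proof the proposal is incomplete in exactly the place that matters. The entire content of the lemma beyond Christ's single-lattice construction is the existence of the $\kappa$ adjacent families, with $\kappa$ controlled by the doubling constant, such that \emph{every} ball $V(s,r)$ at \emph{every} scale is swallowed by a single cube of comparable measure in at least one family. You describe this as ``the key geometric fact'' and then defer it (``the step I expect to be the main obstacle''), citing the Hyt\"onen--Kairema packing/pigeonhole count rather than carrying it out. Since that is precisely the theorem being proved, the proposal as written is an annotated plan pointing at the reference, not an independent proof --- which, to be fair, puts you in the same position as the paper itself. One small notational inconsistency worth fixing if you do write this out: you take generation-$n$ centers to be $\delta^n$-separated with $\delta<1$ (so cubes \emph{shrink} as $n$ grows) while simultaneously declaring the parent of a generation-$n$ center to live at generation $n+1$, matching the paper's convention that $\CP^i_n$ refines $\CP^i_{n+1}$; these two choices are incompatible, and you should either use scales $\delta^{-n}$ or place parents at generation $n-1$.
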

 
\noindent The following remark will be used in the sequel.

 \begin{rem}\label{Hytonen rem}
     It follows from \cite{hytonen2012systems} that in Lemma \ref{Hytonen partition}, suppose  $i \in [\kappa]$ and  $n \in \Z$, then for each element $Q \in \CP^i_n$, there exists two balls $B_1 $ and $B_2$ of finite radius such that
     $$ B_1 \subseteq Q \subseteq B_2.$$
  Moreover, we note that  when $X$ is a locally compact group and $\mu$ is a Haar measure, we can conclude that $\mu(Q) < \infty$ for all such $Q$.
 \end{rem}

	\noindent Consider the families $\CP^1,\ldots, \CP^\kappa$ as in Lemma \ref{Hytonen partition}. For $s \in X$ and $n \in \Z$, let $\CP^i_n(s)$ denote the unique cube containing $s$, in the family $\CP^i$ of partitions. Suppose $ F $ is a compact subset of $X$. For any  given $r \in \R_+$,  we consider the following set 
	\begin{align*}
		O_r(F) =\underset{s \in F}{\cup} \{ n_r(s)\in \Z :&
		\text{ such that } V(s, r)  \subseteq \CP^i_{ n_r(s)}(s), \text{ for some }   i \in  [\kappa] 
		,  ~ \text{ and }~ \\
		&\textcolor{black}{ \mu(\CP^i_{ n_r(s)}(s)) \leq c_1c_2 \mu(V(s,r))} \}.
	\end{align*}
	
	\begin{lem}\label{finite set}
		Let $(X,d, \mu)$ be a metric measure space satisfying the doubling condition. Furthermore, let $r>0$ and assume that $\mu(V(s,r))= \mu(V(t,r))$ for all $s,t \in X$. Now suppose  $ F \subseteq X$ is a compact set. Then there exists a finite subset $O'_r(F)$ of $O_r(F)$ such that  
		\begin{align*}
			\forall ~ s \in F ~ \exists~ n_r(s) \in O'_r(F) &\text{ such that } V(s, r)  \subseteq \CP^i_{ n_r(s)}(s), \text{ for some }   i \in  [\kappa], \text{ and } \\
			&\mu(\CP^i_{ n_r(s)}(s)) \leq c_1 c_2 \mu(V(s,r)).
		\end{align*} 
	\end{lem}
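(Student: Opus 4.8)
The plan is to show that the integer \emph{scales} occurring in $O_r(F)$ are trapped in a fixed finite window, so that $O_r(F)$ is itself finite and one may simply take $O'_r(F):=O_r(F)$. First I would observe that $O'_r(F)=O_r(F)$ automatically has the asserted covering property: by Lemma~\ref{Hytonen partition} every $s\in F$ admits some $i\in[\kappa]$ and some $n\in\Z$ with $V(s,r)\subseteq \CP^i_n(s)$ and $\mu(\CP^i_n(s))\le c_2\,\mu(V(s,r))\le c_1c_2\,\mu(V(s,r))$ (since $s\in V(s,r)\subseteq Q$ for $Q\in\CP^i_n$ forces $Q=\CP^i_n(s)$, and $c_1\ge 1$), so each $s$ has at least one valid scale, which by definition lies in $O_r(F)$. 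Hence the entire content is the finiteness of $O_r(F)$. Writing $m_r:=\mu(V(s,r))$, a single positive constant by the standing equal-measure hypothesis, every valid cube $Q=\CP^i_n(s)$ satisfies
\begin{equation*}
 m_r \;=\; \mu(V(s,r)) \;\le\; \mu(Q) \;\le\; c_1c_2\,m_r,
\end{equation*}
the lower bound coming from $V(s,r)\subseteq Q$. Thus every valid cube has measure pinned in the fixed band $[m_r,\,c_1c_2\,m_r]$.

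Next I would convert this measure band into a band of scales. By Remark~\ref{Hytonen rem} and the quantitative construction of \cite{hytonen2012systems}, each $Q\in\CP^i_n$ is sandwiched between two concentric balls of radii comparable to a scale $\lambda^n$ (with $\lambda>1$, since $\CP^i_n$ refines $\CP^i_{n+1}$, so cubes grow with $n$); that is, $V(z_Q,a\lambda^n)\subseteq Q\subseteq V(z_Q,A\lambda^n)$ for constants $0<a\le A$ and a centre $z_Q$. I would then invoke the homogeneity underlying the equal-measure hypothesis: in the setting of interest the function $\rho\mapsto\mu(V(x,\rho))$ is independent of the centre $x$, non-decreasing, tends to $0$ as $\rho\to0$ and to $+\infty$ as $\rho\to\infty$. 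The inner inclusion gives $\mu(V(z_Q,a\lambda^n))\le\mu(Q)\le c_1c_2\,m_r$, which together with $\mu(V(\cdot,\rho))\to\infty$ bounds $\lambda^n$, hence $n$, above by some $n_{\max}$; the outer inclusion gives $\mu(V(z_Q,A\lambda^n))\ge\mu(Q)\ge m_r$, which together with $\mu(V(\cdot,\rho))\to0$ bounds $n$ below by some $n_{\min}$. Both bounds depend only on $r$ and the constants $a,A,\lambda,c_1,c_2$, not on $s$, so every valid scale satisfies $n_{\min}\le n\le n_{\max}$ and $O_r(F)\subseteq\{n_{\min},\dots,n_{\max}\}$ is finite.

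The genuinely delicate point is the uniform volume-growth estimate $\mu(V(x,\rho))\to\infty$ as $\rho\to\infty$, uniformly in the centre, and this is what consumes all three hypotheses (the doubling condition, equality of radius-$r$ ball measures, and compactness of $F$). The mechanism I would use is a packing argument: in an unbounded space one produces a $2r$-separated sequence escaping to infinity, so that $V(x,\rho)$ contains arbitrarily many pairwise disjoint balls of radius $r$, each of measure exactly $m_r$ by the equal-measure hypothesis, whence $\mu(V(x,\rho))\to\infty$; the doubling condition~\eqref{doub cond-defn} controls the overlaps in the complementary covering estimate, and compactness of $F$ (which keeps the relevant centres $z_Q$ in a fixed bounded neighbourhood of $F$) upgrades the pointwise growth to a bound uniform in $s\in F$. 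I expect this uniformity --- ruling out that distinct points of the compact set $F$ force unboundedly many distinct scales --- to be the main obstacle; once it is secured, trapping $n$ in $[n_{\min},n_{\max}]$ and setting $O'_r(F):=O_r(F)$ completes the proof.
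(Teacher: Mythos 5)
Your reduction of the lemma to the finiteness of $O_r(F)$ is where the argument breaks: the lemma only asks for a \emph{finite subset} of $O_r(F)$ with the covering property, and $O_r(F)$ itself is infinite in general, so the plan of taking $O'_r(F):=O_r(F)$ cannot work. Concretely, take $X=\Z^d$ with the Euclidean metric and counting measure (doubling, and within the paper's setting) and $r<1$, so that $V(s,r)=\{s\}$. Then $V(s,r)=\{s\}\subseteq\CP^i_n(s)$ for \emph{every} level $n$, and since the cubes $\CP^i_n(s)$ shrink to the singleton $\{s\}$ as $n\to-\infty$ (by the very ball-sandwiching you invoke), the condition $\mu(\CP^i_n(s))\le c_1c_2\,\mu(V(s,r))$ holds for all sufficiently negative $n$; hence $O_r(F)$ contains a full tail of negative integers. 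This is precisely the failure mode of your lower bound $n\ge n_{\min}$, which relies on $\mu(V(x,\rho))\to0$ as $\rho\to0$ --- false for atomic measures. Dually, your upper bound relies on $\mu(V(x,\rho))\to\infty$ as $\rho\to\infty$, which fails when $X$ is bounded (e.g.\ a compact group, which the hypotheses allow). Neither limit follows from the doubling condition Eq.~\ref{doub cond-defn} or the equal-measure hypothesis; the quantitative sandwiching $V(z_Q,a\lambda^n)\subseteq Q\subseteq V(z_Q,A\lambda^n)$ is also an input beyond Remark \ref{Hytonen rem}, and the packing argument for uniform volume growth is only sketched.

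The intended argument is much softer and uses compactness for a different purpose than the one you assign to it. Cover $F$ by the balls $\{V(s,r):s\in F\}$ and extract a finite subcover $V(s_1,r),\dots,V(s_l,r)$. For each $m\in[l]$ apply Lemma \ref{Hytonen partition} to the ball $V(s_m,2r)$ to obtain $i\in[\kappa]$ and a level $n_{2r}(s_m)$ with $V(s_m,2r)\subseteq\CP^i_{n_{2r}(s_m)}(s_m)$ and $\mu(\CP^i_{n_{2r}(s_m)}(s_m))\le c_2\,\mu(V(s_m,2r))$, and let $O'_r(F)$ be the resulting finite set of levels. Given $s\in F$, pick $m$ with $s\in V(s_m,r)$; then $V(s,r)\subseteq V(s_m,2r)\subseteq\CP^i_{n_{2r}(s_m)}(s_m)=\CP^i_{n_{2r}(s_m)}(s)$, and by doubling together with the equal-measure hypothesis, $\mu(\CP^i_{n_{2r}(s_m)}(s))\le c_2\,\mu(V(s_m,2r))\le c_1c_2\,\mu(V(s_m,r))=c_1c_2\,\mu(V(s,r))$. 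This is where the constant $c_1$ and the assumption $\mu(V(s,r))=\mu(V(t,r))$ actually enter; no control over the set of all admissible scales is needed.
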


	\begin{proof}
		Let $ r >0$. Consider the open cover $ \{ V(s, r ) : s \in F \}$ of $F$. Thus, as $F$ is compact,  so, there exists a finite subcover $ V(s_1, r ), V(s_2, r ), \cdots, V(s_l, r )$ such that 
		\begin{align*}
			F \subseteq \overset{l}{ \underset{m=1}{\cup}} V(s_{ m}, r ).
		\end{align*}
		Now, by Lemma \ref{Hytonen partition}, we have for all $m \in [l] $ there exists $ n_{2r}(s_m) \in \Z$  and  a  $  i \in  [\kappa]$ such that 
		\begin{align*}
			V(s_m, 2r ) \subseteq \CP_{n_{2r}(s_m) }^i (s_m) \text{ and } \mu( \CP_{n_{2r}(s_m) }^i (s_m)  ) \leq c_2 \mu (   V(s_m, 2r ) ) .
		\end{align*}
		
		\noindent For all $m \in [l]$, choose only one such $n_{2r}(s_m) \in \Z$ and denote the collection of these integers to be $O'_r(F)$. Therefore, the set $O'_r(F)$ is finite. Moreover, for any $ s \in F$,  there exists $  m \in  [l]$, $i \in [\kappa]$ and $n_{2r}(s_m) \in O'_r(F)$ such that
		\begin{align*}
			V(s,r) \subseteq V(s_m,2r) \subseteq \CP_{n_{2r}(s_m)}^i(s_m).
		\end{align*}
		Also, from Eq. \ref{doub cond-defn} we have,
		\begin{align*}
			\mu( \CP_{n_{2r}(s_m) }^i (s_m)  ) 
   \leq c_2 \mu (   V(s_m, 2r ) )
   \leq c_1c_2 \mu (   V(s_m, r ) )= c_1 c_2 \mu (   V(s, r ) ).
		\end{align*}
		This completes the proof.
	\end{proof}
	

Let $(X,d, \mu)$ be a metric measure space satisfying the doubling condition and $\CB$ denotes the $\sigma$-algebra of Borel sets in $X$.  Suppose $\{\CP^i_n : n \in \Z \text{ and } i \in [\kappa] \}$ be  the collection of partitions of $X$ as obtained in Lemma \ref{Hytonen partition}. We note that  for $ i \in [\kappa] \text{ and }  n \in \Z$, $\CP^i_{n}$ is a refinement of $\CP^i_{n+1}$ and let $\CB^i_n$ denote the subalgebra of $\CB$ generate by $\CP_n$. Therefore, for each $ i \in [ \kappa ], ~\{\CB^i_n\}_{n \in \Z}$ is a decreasing sequence of sub $\sigma$-algebras of $\CB$.

Let $\CM$ be a von Neumann algebra with a f.n. state $\rho$  on $\CM$. We consider the von Neumann algebra $\CN:= L^\infty(X, \CB, \mu) \otimes \CM$. Clearly, $\CN$ is equipped with the f.n. weight $\tilde{\rho} := (\int \cdot~ d\mu) \otimes \rho$. Now for all $n \in \Z$ we define
	\begin{align*}
		\CN_n:= L^\infty(X, \CB_n, \mu) \otimes \CM.
	\end{align*}
	Note that, $(\CN_n)_n$ is a decreasing sequence of subalgebras of $\CN$ and therefore, for all $n \in \Z$, by \cite{takesaki1972conditional} there exists a normal conditional expectation $E_n$ from $\CN$ onto $\CN_n$ since $ \tilde{\rho}|_{ \CN_n}$ is semifinite. Observe that, the conditional expectation $E_n$ is of the form $E_n:= \E_n(\cdot | \CB_n) \otimes I$, where $\E_n(\cdot | \CB_n)$ is the conditional expectation from $L^\infty(X, \CB, \mu)$ onto $L^\infty(X, \CB_n, \mu)$. Observe that, $\tilde{\rho} \circ E_n= \tilde{\rho}$ for all $n \in \N$. Moreover, from \cite[Theorem 1.22.13]{sakai1971c} and \cite[Theorem 2.1.6]{dunford1940linear} we can also identify
	
	\begin{align*}
		\CN_* =  
  L^1(X;  \CM_*).
\end{align*}

\begin{rem}
 If $T: \CN \ri \CN$ is a bounded linear map. Suppose $ f \in \CN_*= L^1(X, \CM_*)$. 
 Then we use the following identification in the sequel:
 \begin{align*}
 	T(f)(x) = \int f(s)(( T(x)(s))d\mu.
 \end{align*}
\end{rem}

\noindent Now we have the following theorem.
\begin{thm}\label{dominated by martingale}
		For $r>0$ let $A_r$ be the averaging operator
		\begin{align*}
			A_r f(s):= \frac{1}{\mu(V(s,r))} \int_{V(s,r)} f d\mu; ~ f \in L^1(X; M_*).
		\end{align*}
		Then there exists a $ \kappa \in \N$, and martingales $ \{( E^i_k)_{k \in \Z}: i \in [\kappa]\}  $ on $\CN$ such that for any given $r > 0$ and   $ s \in X$, there exists $ i \in [\kappa] $ and a $k_r \in \Z $ satisfying
		\begin{align*}
			A_r(f)(s)	\leq c_2 E_{k_r}^i(f)(s).
		\end{align*}
	\end{thm}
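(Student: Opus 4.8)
The plan is to take the martingales $(E^i_k)_{k \in \Z}$ to be exactly the conditional expectations onto the subalgebras $\CN^i_k := L^\infty(X, \CB^i_k, \mu) \otimes \CM$ attached to the partition families of Lemma \ref{Hytonen partition}, where $\CB^i_k$ is the sub-$\sigma$-algebra generated by $\CP^i_k$. Since $\CP^i_k$ refines $\CP^i_{k+1}$, the $\CB^i_k$ decrease in $k$, so $(\CN^i_k)_k$ is a decreasing sequence of subalgebras and the tower property of conditional expectations supplies the martingale structure; each $E^i_k$ exists and is $\tilde{\rho}$-preserving by the discussion preceding the theorem. First I would record the explicit form of these conditional expectations on $L^1(X;\CM_*)$: for a positive $f$ and $s$ lying in the cube $Q = \CP^i_k(s)$, which has finite measure by Remark \ref{Hytonen rem}, one has $E^i_k(f)(s) = \frac{1}{\mu(Q)} \int_Q f \, d\mu$. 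This is the standard averaging formula for a conditional expectation over an atom of a partition, transported through the tensor identification $E^i_k = \E(\cdot\mid\CB^i_k)\otimes I$.

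With this in hand, the core of the argument is a single selection followed by a monotonicity estimate. Fix $r > 0$ and $s \in X$. By Lemma \ref{Hytonen partition} there exist $i \in [\kappa]$ and $k_r \in \Z$ such that the cube $Q := \CP^i_{k_r}(s)$ satisfies $V(s,r) \subseteq Q$ and $\mu(Q) \leq c_2\,\mu(V(s,r))$. Taking $f \in L^1(X;\CM_*)_+$ (the inequality being an order relation in $\CM_{*+}$), the inclusion $V(s,r) \subseteq Q$ together with positivity of $f$ gives $\int_{V(s,r)} f\, d\mu \leq \int_Q f\, d\mu$ in $\CM_{*+}$. Combining this with the measure comparison $\mu(V(s,r))^{-1} \leq c_2\,\mu(Q)^{-1}$ yields
\begin{align*}
A_r(f)(s) = \frac{1}{\mu(V(s,r))}\int_{V(s,r)} f\, d\mu \leq \frac{1}{\mu(V(s,r))}\int_{Q} f\, d\mu \leq \frac{c_2}{\mu(Q)}\int_{Q} f\, d\mu = c_2\, E^i_{k_r}(f)(s),
\end{align*}
which is precisely the claimed domination.

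The argument is short because Lemma \ref{Hytonen partition} does the heavy lifting; the points requiring care are bookkeeping rather than depth. I would make sure the averaging formula for $E^i_k$ is used only on cubes of finite measure, which is exactly why Remark \ref{Hytonen rem} is invoked, and I would phrase the monotonicity $\int_{V(s,r)} f \leq \int_Q f$ carefully as an inequality in the ordered Banach space $\CM_*$, using that integration of a positive $\CM_*$-valued function over a larger set dominates integration over a smaller one. The only genuine subtlety is that the selected pair $(i, k_r)$ depends on both $r$ and $s$, so the domination is pointwise in $s$; no uniform choice is claimed, and none is needed for the transference and maximal-inequality applications that follow.
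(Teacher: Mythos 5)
Your proposal is correct and follows essentially the same route as the paper: both take the $E^i_k$ to be the conditional expectations given by averaging over the cubes of the Hyt\"onen--Kairema partition families, then combine the inclusion $V(s,r)\subseteq \CP^i_{k_r}(s)$ with the measure comparison $\mu(\CP^i_{k_r}(s))\leq c_2\,\mu(V(s,r))$ to get the pointwise domination. Your explicit remarks on positivity of $f$ and on the finiteness of $\mu(Q)$ are details the paper leaves implicit, but the argument is the same.
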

	
	\begin{proof}
		Applying Lemma \ref{Hytonen partition} we obtain a $ \kappa \in \N$,  a constant $c_2 $ and martingales $ \{( E^i_k)_{k \in \Z}: i \in [\kappa]\}  $ on $\CN$. Observe that for all $f \in L^1(X; \CM_*)$ and $s \in X$,
		\begin{align*}
			E^i_k(f)(s):= \frac{1}{\mu(\CP^i_k(s))} \int_{\CP^i_k(s)} f d\mu.
		\end{align*}
		
		Furthermore, for all $s \in X$ and $r>0$, there exists $ i \in [\kappa]$ and $k_r \in \Z$ such that 
		\begin{align*}
			V(s,r) \subset \CP^i_{k_r}(s),  ~ \text{and }~ \mu(\CP^i_{k_r}(s)) \leq c_2\mu(V(s,r)).
		\end{align*}
		
		Therefore,
		\begin{align*}
			A_r f(s):= \frac{1}{\mu(V(s,r))} \int_{V(s,r)} f d\mu 
			& \leq c_2 \frac{1}{\mu(\CP^i_{k_r}(s))} \int_{\CP^i_{k_r}(s)} f d\mu\\
			& =c_2E^i_{k_r}f(s)
		\end{align*}
		%
		%
		%
		%
		%
		%
		%
		%
	\end{proof}

\noindent  As a corollary of Theorem \ref{dominated by martingale} and Lemma \ref{finite set}, we obtain the following.

 \begin{cor}\label{dominated by martingale-cor}
     Assume that $G$ is a group metric measure space satisfying doubling condition and let $F \subseteq G$ be compact and $r>0$. Then for all $s \in F$ there exists $ i \in [\kappa] $ and $k_r \in  O'_{r}(F) $ satisfying
		\begin{align*}
			A_r(f)(s)	\leq C E_{k_r}^i(f)(s),
		\end{align*}
  where $C=c_1c_2$
 \end{cor}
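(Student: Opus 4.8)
The plan is to combine the pointwise martingale domination of Theorem \ref{dominated by martingale} with the finiteness statement of Lemma \ref{finite set}; the only genuinely new input is that the group structure on $G$ forces all balls of a fixed radius to share the same measure, which is exactly the hypothesis needed to invoke Lemma \ref{finite set}. Once that hypothesis is in place, the corollary is essentially a transcription of the estimate already carried out in the proof of Theorem \ref{dominated by martingale}, but with the martingale index confined to the finite set $O'_r(F)$.

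First I would verify the equal-measure hypothesis. Since $d$ is an invariant metric and $\mu$ is a right-invariant Haar measure, for $s,t \in G$ the ball $V(t,r)$ is a translate of $V(s,r)$ by the group element $s^{-1}t$; right invariance of $\mu$ then gives $\mu(V(t,r)) = \mu(V(s,r))$. Hence $\mu(V(s,r)) = \mu(V(t,r))$ for all $s,t \in G$ and every fixed $r>0$, and together with the compactness of $F$ this places us precisely in the setting of Lemma \ref{finite set}.

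Next, fix $r>0$ and the compact set $F$. Applying Lemma \ref{finite set} yields the finite subset $O'_r(F) \subseteq O_r(F)$ such that for every $s \in F$ there are an index $i \in [\kappa]$ and an integer $k_r = n_r(s) \in O'_r(F)$ with $V(s,r) \subseteq \CP^i_{k_r}(s)$ and $\mu(\CP^i_{k_r}(s)) \leq c_1 c_2\, \mu(V(s,r))$. I would then repeat the estimate from the proof of Theorem \ref{dominated by martingale} for this particular choice of cube: for $f \geq 0$, monotonicity of the integral across the inclusion $V(s,r) \subseteq \CP^i_{k_r}(s)$ gives
\begin{align*}
A_r f(s) = \frac{1}{\mu(V(s,r))} \int_{V(s,r)} f\, d\mu \leq \frac{\mu(\CP^i_{k_r}(s))}{\mu(V(s,r))} \cdot \frac{1}{\mu(\CP^i_{k_r}(s))} \int_{\CP^i_{k_r}(s)} f\, d\mu \leq c_1 c_2\, E^i_{k_r}(f)(s),
\end{align*}
which is the claimed inequality with $C = c_1 c_2$ and $k_r \in O'_r(F)$.

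The only delicate point, and the step I expect to require the most care, is the verification that the ball-measure is independent of its center; everything else is a direct application of the two cited results. In a general metric measure space this independence fails, which is why Lemma \ref{finite set} carries the equal-measure hypothesis explicitly; the content of the corollary is precisely that the group setting supplies it for free, thereby upgrading the index $k_r$ from an arbitrary integer (as in Theorem \ref{dominated by martingale}) to a member of the finite set $O'_r(F)$, at the cost of enlarging the constant from $c_2$ to $c_1 c_2$.
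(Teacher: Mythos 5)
Your proposal is correct and follows essentially the same route as the paper: invoke Lemma \ref{finite set} to place the cube index in the finite set $O'_r(F)$ with the constant $c_1c_2$, then compare the two averages over the nested sets. The only addition is your explicit verification that the invariance of $d$ and of $\mu$ gives $\mu(V(s,r))=\mu(V(t,r))$, which the paper records earlier in its setup rather than inside this proof.
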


 \begin{proof}
     Let $r>0$. By Lemma \ref{finite set}, we have for all $s \in F$ there exists $ i \in [\kappa]$ and $k_r \in  O'_{r}(F)$ such that 
		\begin{align*}
			V(s,r) \subset \CP^i_{k_r}(s),  ~ \text{and }~ \mu(\CP^i_{k_r}(s)) \leq c_1c_2\mu(V(s,r)).
		\end{align*}
		Therefore,
		\begin{align*}
			A_r f(s):= \frac{1}{\mu(V(s,r))} \int_{V(s,r)} f d\mu 
			& \leq c_1c_2 \frac{1}{\mu(\CP^i_{k_r}(s))} \int_{\CP^i_{k_r}(s)} f d\mu\\
			& = c_1c_2E^i_{k_r}f(s),
		\end{align*}
  which completes the proof.
 \end{proof}

	Let $G$ be an amenable locally compact group equipped with a right invariant Haar measure $\mu$. A metric $d$ on $G$ is said to be invariant if 
	\begin{align}\label{inv metric-defn}
		d(e,g)=d(h, gh), \text{ for all } g,h \in G,
	\end{align}
		where $e$ is the identity element of $G$. Furthermore, we also assume that $(G, d, \mu)$ is a metric measure space satisfying the doubling condition. That is there exists a constant $c_1>0$ (cf. Eq. \ref{doub cond-defn}) such that 
	\begin{align*}
		\mu(V_{2r}) \leq c_1 \mu(V_r) \text{ for all } r>0,
	\end{align*}
	where for any $r>0$, $V_r:=\{s \in G: d(s,e) \leq r\}$. Note that it follows from \cite{calderon1953general} that $G$ is unimodular. We will call the triple $(G,d, \mu)$ a group metric measure space.

	Now let us assume a dynamical system $(\CM, G, \alpha)$, where $(G,d, \mu)$ be a group metric measure space satisfying the doubling condition. Then we define for all $r>0$,
	\begin{align*}
		A_r(x)= \frac{1}{\mu(V_r)} \int_{V_r} \alpha_s(x) d\mu(s), ~ x \in \CM,
	\end{align*}
	
	\noindent and for all $\phi \in \CM_*$, we define for $r>0$,
	\begin{align*}
		A_r(\phi)(x)= \frac{1}{\mu(V_r)} \int_{V_r} \phi(\alpha_s(x)) d\mu(s), ~ x \in \CM.
	\end{align*}
	
\noindent	Finally, for $f \in L^1(G; \CM_*)$ we consider the following averages;
	\begin{align*}
		A_r'f(s)= \frac{1}{\mu(V_r)} \int_{V_r} f(ts) d\mu(t), ~ s \in G, ~r>0.
	\end{align*}
	
	Note that by invariance of $d$, for all $s \in G$ and $r>0$, we have $V_r s= V(s,r)$ and $\mu(V_r)=\mu(V_r s)$. Therefore, for all $r>0$,
	\begin{align*}
		A_r'f(s)= \frac{1}{\mu (V(s,r))} \int_{V(s,r)} f d\mu.
	\end{align*}

	Let $(r_n)_{n \in \N}$ be any increasing sequence of strictly positive real numbers. We will now work with the averages $(A_{r_n})$ and $(A'_{r_n})$. But for notational convenience, we will denote them as $(A_n)$ and $(A'_n)$. Moreover, for every $n \in \N$, by $\mu_n$ we denote the normalization of  $\mu$ restricted  on $V_{r_n}$, i.e,  $\mu_n(E)  = \frac{ \mu( V_{r_n} \cap E ) }{ \mu(V_{r_n})}$.  Then we have the following,
	\begin{align*}
		A_n(\phi)(x)= \int_G \phi(\alpha_s(x)) d\mu_n(s), ~ \phi \in \CM_*,~ x \in \CM, ~ n \in \N
	\end{align*}
 and 
	\begin{align*}
		A_n'f(s)= \int_{G} f(ts) d\mu_n(t), ~f \in L^1(G; \CM_*),~ s \in G, ~ n \in \N.
	\end{align*}
	
	\begin{defn}\label{weak11}
		Let $\phi \in M_{* +}$. Then a sequence of positive maps $( S_n)$ on $M_*$ 
  is said to have weak $(1,1)$ property if  for all $n \in \N$ and $\eps, \delta>0$, there exists projections $\{e^{ \delta}_{n, \eps}   ~\}$ such that
		\begin{enumerate}
			
\item $ \rho( 1- e^\delta_{ n, \eps})< \frac{\norm{\phi}}{\eps}$ and 
   
			\item $S_k(\phi)(e^{ \delta}_{n, \eps} x e^{ \delta}_{n, \eps}) \leq \delta \norm{\phi} \norm{x} + \eps \rho(e^{\delta}_{n, \eps} x e^{ \delta}_{n, \eps})$ for all $x \in M_+$ and $k \in \{1,\cdots, n\}$.
		\end{enumerate}
	\end{defn}
	
	The following theorem proves a weak type $(1,1)$-property for Martingales.
	
	\begin{thm}\label{maximal ineq for martingale}
		Let $\CM$ is a von Neumann algebra with f.n. state $ \rho$ and $ ( E_n)_{ n \in \Z}$ be a decreasing sequence of $\rho$-preserving martingales. Then for  $\phi \in \CM_{* +}$ and $l \in \N$,  the sequence $(E_{r-l-1}(\phi))_{r \geq 1}$ satisfies the following  properties: for all $n \in \N$ and $\eps, \delta>0$, there exists a projection $e^{\delta}_{n, \eps} \in \CM$ such that  
		
		\begin{enumerate}
			\item $\rho(1- e^{\delta}_{n, \eps}) < \frac{\norm{\phi}}{\eps}$,
\item $E_{k-l-1}(\phi)( x ) \leq L_k^{\delta }( \phi)(x)  + \eps \rho(x)$ for all $x \in e^{\delta}_{n, \eps} \CM_+ e^{\delta}_{n, \eps}$ and $k \in [n]$ and 
\item $E_{k-l-1}(\phi)( x ) \leq \delta \norm{\phi} \norm{x} + \eps \rho(x)$ for all $x \in e^{\delta}_{n, \eps} \CM_+ e^{\delta}_{n, \eps}$ and $k \in [n]$.
		\end{enumerate}
	\end{thm}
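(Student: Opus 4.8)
The plan is to reduce the weak-type estimate for the whole martingale family to the abstract maximal inequality of Theorem \ref{maximal lemma-1}, which concerns a \emph{single} Markov operator. The bridge between the two is Neveu's averaging construction (Theorem \ref{dang-njoc lemma}): it manufactures one positive, unital, $\rho$-preserving operator $\CT$ whose Ces\`aro averages simultaneously approximate, in operator norm and with arbitrarily small total error, all of the conditional expectations in the sequence. Once $\CT$ is available, a single projection produced by Theorem \ref{maximal lemma-1} controls every average $\frac1m\sum_{j=0}^{m-1}(\CT^*)^j(\phi)$, and hence, after correcting by the small Neveu defect, every $E_{k-l-1}(\phi)$.

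Concretely, first I would reindex the given family by setting $\CE_k := E_{k-l-1}$ for $k \in \N$; this is again an infinite, decreasing sequence of $\rho$-preserving normal conditional expectations, so Theorem \ref{dang-njoc lemma} applies. For the prescribed $\delta>0$ it returns a Markov operator $\CT = \sum_m (a_{m+1}-a_m)\CE_m$ together with integers $q_1 < q_2 < \cdots$ such that, writing $\Phi_k := \frac1{q_k}\sum_{j=0}^{q_k}\CT^j$ and $\delta_k := \norm{\Phi_k - \CE_k}$, one has $\sum_k \delta_k \le \delta$, and in particular $\delta_k \le \delta$ for every $k$. The operator $\CT$ is a norm-convergent combination of normal conditional expectations, hence itself normal and positive, and it satisfies $\CT(1)=1 \le 1$ together with $\rho\circ\CT = \rho$, so it meets exactly the hypotheses required by Theorem \ref{maximal lemma-1}.

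Next I would apply Theorem \ref{maximal lemma-1} to $T = \CT$ and $\nu = \phi$, with the averaging range large enough to cover every window $q_1,\ldots,q_n$ (i.e.\ with the ``$n$'' there taken to be $q_n$). This yields a projection, which we name $e^\delta_{n,\eps}$, with $\rho(1 - e^\delta_{n,\eps}) < \norm{\phi}/\eps$, which is precisely (1); its defining feature is that every Ces\`aro average $\frac1m\sum_{j=0}^{m-1}(\CT^*)^j(\phi)$ with $m \le q_n$ is dominated by $\eps\rho(x)$ on the corner $e^\delta_{n,\eps}\CM_+ e^\delta_{n,\eps}$. Crucially the measure bound $\norm{\phi}/\eps$ does not deteriorate as the window grows, so one projection serves all $k \in [n]$ at once. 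Each Neveu average $\Phi_k$ ($k \in [n]$) is such a Ces\`aro average, whence $\Phi_k(\phi)(x) \le \eps\rho(x)$ on that corner. Writing $E_{k-l-1}(\phi)(x) = \Phi_k(\phi)(x) + L_k^\delta(\phi)(x)$, the term $L_k^\delta(\phi)$ being precisely the Neveu defect $(\CE_k - \Phi_k)(\phi)$, the controlled first summand gives (2); and since $L_k^\delta(\phi)(x) = \phi\big((\CE_k - \Phi_k)(x)\big) \le \norm{\phi}\,\delta_k\,\norm{x} \le \delta\norm{\phi}\norm{x}$ for $x \ge 0$, substitution into (2) yields (3).

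The main obstacle I anticipate is not any single estimate but the correct orchestration of the two approximation schemes: Theorem \ref{maximal lemma-1} controls only the Ces\`aro averages of one fixed Markov operator, whereas the quantities of interest are the genuinely distinct expectations $E_{k-l-1}$, and Neveu's theorem is exactly the device that collapses the family into a single operator. The delicate points are to check that the Neveu defect is measured in the operator norm on $\CM$, so that it transfers to the functional estimate $\le \delta\norm{\phi}\norm{x}$ uniformly in $k$, and that a single projection---depending on $n$ only through the largest window $q_n$, and on $\delta$ through the Neveu data---simultaneously handles all $k \in [n]$. The remaining care is bookkeeping: verifying normality of the norm-convergent sum defining $\CT$, its unitality and $\rho$-invariance, and tracking the normalisation relating the $\Phi_k$ to the averages $S_m$ so that the constants in (1)--(3) emerge exactly as stated.
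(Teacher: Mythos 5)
Your proposal is correct and follows essentially the same route as the paper's proof: invoke Neveu's construction (Theorem \ref{dang-njoc lemma}) to produce a single $\rho$-preserving Markov operator $T_\delta$ with windows $q_k^\delta$, apply Theorem \ref{maximal lemma-1} at the largest window $q_n^\delta$ to get one projection controlling all the Ces\`aro averages $S^\delta_{q_k^\delta}$, and then absorb the defect $L_k^\delta = E_{k-l-1} - S^\delta_{q_k^\delta}$ via the operator-norm bound $\norm{L_k^\delta}\le\delta$ to obtain the term $\delta\norm{\phi}\norm{x}$. The normalisation mismatch you flag between $\frac{1}{q_k}\sum_{j=0}^{q_k}$ and $S_{q_k}$ is present in the paper as well and is harmless.
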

	
	\begin{proof}
		Let $n \in \N$ and $\eps, \delta>0$. Since $ ( E_{k-l-1})_{ k \geq 1}$ is a decreasing  sequence of martingales, then by Theorem \ref{dang-njoc lemma} there exists $T_\delta$ and a sequence of positive integers $(q^\delta_k)_{k \geq 1}$  such that 
		\begin{align*}
			\sum_{ n=1 }^{\infty } \norm{ \frac{1}{q^\delta_n}\sum_{ k=0 }^{q^\delta_n}    T_{\delta}^k -E_{n-l-1} } \leq \delta.
		\end{align*}
		Moreover, we have $ \rho \circ T_\delta = \rho $. Suppose $ S^\delta_n ( \cdot)= \displaystyle \frac{1}{n}\sum_{ k=0 }^{n-1}    T_{\delta}^k( \cdot)$ and  $ L_n^{\delta}( \cdot) = E_{n-l-1}( \cdot)- S^\delta_{q_n^\delta} ( \cdot)$. Then it follows that 
		\begin{align*}
			E_{n-l-1}( \cdot)=  L_n^{\delta }( \cdot) + S^\delta_{q_n^\delta} ( \cdot).
		\end{align*}
		For $ q_n^\delta $ and $\eps >0 $, we employ  Theorem \ref{maximal lemma-1}, to find    $e^\delta_{n, \eps} \in \CP(\CM)$ such that 
		\begin{enumerate}
			\item 	$\rho(1-e^\delta_{n, \eps}) < \frac{\norm{\phi}}{\eps }$ and 
			\item $ 	S^\delta_k(\phi)(x) \leq \eps \rho(x) \text{ for all } x \in e^\delta_{n, \eps} \CM_+ e^\delta_{n, \eps} \text{ and  }  k \in [ q_n^\delta].$
		\end{enumerate}

		Now suppose $x \in e^\delta_{n, \eps} \CM_+ e^\delta_{n, \eps}$ and $k \in [n]$, then we have,
		\begin{align*}
			 E_{k-l-1}(\phi)(x) \leq&  L_k^{\delta }( \phi)(x)   + S^\delta_{q_k^\delta} ( \phi)(x) \\
			\leq &     \delta \norm{\phi} \norm{x}  + \eps\rho(x).
		\end{align*}
		This completes the proof.
	\end{proof}

\noindent At this point, we would like to remark on the following for future reference.

\begin{rem}\label{max ieq-rem}
In our context we consider  $\CN = L^\infty(X, \CB, \mu) \otimes \CM$,  $\CN_n= L^\infty(X, \CB_n, \mu) \otimes \CM$ and 
    a sequence of conditional expectation  $(E_n)$, where $E_n$ is a map from $\CN$ onto $\CN_n$ as discussed before. Here, we note that   $E_n= \E_n(\cdot | \CB_n) \otimes I$, where $\E_n(\cdot | \CB_n)$ is the conditional expectation from $L^\infty(X, \CB, \mu)$ onto $L^\infty(X, \CB_n, \mu)$. Observe that, $\tilde{\rho} \circ E_n= \tilde{\rho}$ for all $n \in \N$. Note that for each $l > 0$, $( \E_{ k-l-1})_{ k \in \N}$ is decreasing sequence of martingales. So, by Theorem \ref{dang-njoc lemma},  for given $\delta >0$, there exists $ \T_\delta$ (which is  sum of $\E_n$) such that 
   \begin{align*}
			\sum_{ n=1 }^{\infty } \norm{ \frac{1}{q_n}\sum_{ k=0 }^{q_n}    \T^k_\delta -\E_{n-l-1} } \leq \delta .\\
		\end{align*}
Now suppose $ \CT_\delta = \T_\delta \otimes 1$, then we have 
\begin{align*}
			\sum_{ n=1 }^{\infty } \norm{ \frac{1}{q_n}\sum_{ k=0 }^{q_n}    \CT^k_\delta -E_{n-l-1} } \leq \delta .
		\end{align*}

Further, suppose $ \L_n^\delta = \frac{1}{q_n}\sum_{ k=0 }^{q_n}    \T^k_\delta -\E_{n-l-1}$  and $ L_n^\delta = \frac{1}{q_n}\sum_{ k=0 }^{q_n}    \CT^k_\delta -E_{n-l-1}$. Then observe that $ L_n^\delta = \L_n^\delta \otimes 1$. As $ \int \E_n(f) g d\mu = \int  f \E_n(g) d\mu, \text{ for all } f, g \in L^\infty( X, \CB, \mu ) \text{ and } n \in \Z$, it follows that 
$$  \int_X \L_n^\delta (f) g d\mu = \int f \L_n^\delta (g) d \mu \text{ for all } f, g \in L^\infty( X, \CB, \mu ) \text{ and } n \in \N.$$
\end{rem}

\noindent With the notations as in Remark \ref{max ieq-rem}, we have the following lemma.

\begin{lem}\label{maximal ineq for averages-lem}
    Let $f(t):= \chi_D(t) \alpha_t^*(\phi)$ and $x= \chi_F \otimes y$, where $D$ is a compact subset of $X$, $F$ be a measurable subset and $y \in \CM$. Then 
    \begin{align}
        \abs{L^{ \delta}_k (f)(x)} \leq \delta \mu(F) \norm{y} \norm{\phi}.
    \end{align}
\end{lem}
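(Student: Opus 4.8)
The plan is to unwind the pairing $L_k^\delta(f)(x)$ through the identification recorded in the remark following Theorem \ref{dominated by martingale}, exploit the product structure of both $f$ and $x$, and then transfer the operator $\L_k^\delta$ off the indicator $\chi_F$ and onto the scalar function $s \mapsto f(s)(y)$ using the self-adjointness relation from Remark \ref{max ieq-rem}. Concretely, I would first write, using $L_k^\delta(f)(x) = \int_X f(s)\big(L_k^\delta(x)(s)\big)\, d\mu(s)$ together with $L_k^\delta = \L_k^\delta \otimes 1$ and $x = \chi_F \otimes y$, that $L_k^\delta(x) = \L_k^\delta(\chi_F) \otimes y$, so that $L_k^\delta(x)(s) = \L_k^\delta(\chi_F)(s)\, y$ is a scalar multiple of $y$. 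Since each $f(s) \in \CM_*$ is linear, the scalar $\L_k^\delta(\chi_F)(s)$ factors out and
\begin{align*}
L_k^\delta(f)(x) = \int_X \L_k^\delta(\chi_F)(s)\, f(s)(y)\, d\mu(s).
\end{align*}
Setting $h(s) := f(s)(y) = \chi_D(s)\,\phi(\alpha_s(y))$, this is a bounded scalar function on $X$ with $\norm{h}_\infty \leq \norm{\phi}\,\norm{y}$, since the state-preserving automorphisms satisfy $\norm{\alpha_s(y)} = \norm{y}$; compactness of $D$ only serves to keep $f \in L^1(X;\CM_*)$.

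The crucial step is to invoke the self-adjointness of $\L_k^\delta$ on $L^\infty(X,\CB,\mu)$ established in Remark \ref{max ieq-rem}, namely
\begin{align*}
\int_X \L_k^\delta(\chi_F)\, h \, d\mu = \int_X \chi_F \, \L_k^\delta(h)\, d\mu.
\end{align*}
This is exactly what places the measure of $F$ out front, after which I would estimate $\abs{\int_X \chi_F\, \L_k^\delta(h)\, d\mu} \leq \mu(F)\,\norm{\L_k^\delta(h)}_\infty$. Finally, the summability bound of Theorem \ref{dang-njoc lemma}, giving $\sum_n \norm{\L_n^\delta} \leq \delta$ and hence $\norm{\L_k^\delta} \leq \delta$ as an operator on $L^\infty(X,\CB,\mu)$, yields $\norm{\L_k^\delta(h)}_\infty \leq \delta\,\norm{h}_\infty \leq \delta\,\norm{\phi}\,\norm{y}$, producing the asserted bound $\delta\,\mu(F)\,\norm{y}\,\norm{\phi}$.

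The main obstacle here is bookkeeping rather than depth: correctly tracking the two levels of the tensor product (the scalar operator $\L_k^\delta$ versus the identity on $\CM$) and verifying that, once the functional is evaluated at $y$, the whole computation collapses to a purely scalar integral on which self-adjointness is available. The one point genuinely worth emphasizing is the \emph{direction} of the estimate: a naive bound that leaves $\L_k^\delta$ acting on $\chi_F$ would only control the integral by $\mu(D)$, so transferring $\L_k^\delta$ onto $h$ is precisely the maneuver that replaces $\mu(D)$ by the desired $\mu(F)$. The only analytic inputs are the norm preservation $\norm{\alpha_s(y)} = \norm{y}$ and the operator estimate $\norm{\L_k^\delta} \leq \delta$, both of which are already in hand.
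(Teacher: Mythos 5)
Your proposal is correct and follows essentially the same route as the paper: both unwind the pairing via $L_k^\delta = \L_k^\delta\otimes 1$, use the self-adjointness of $\L_k^\delta$ with respect to $\mu$ (Remark \ref{max ieq-rem}) to shift the operator off $\chi_F$ so that $\mu(F)$ rather than $\mu(D)$ appears, and conclude with $\norm{\L_k^\delta}\leq\delta$ and $\abs{\alpha_s^*(\phi)(y)}\leq\norm{\phi}\norm{y}$. The only (cosmetic) difference is that the paper first bounds the functional term and then transfers $\L_k^\delta$ from $\chi_F$ to $\chi_D$, whereas you transfer it directly onto the scalar function $h(s)=\chi_D(s)\phi(\alpha_s(y))$ before estimating; both steps are equivalent.
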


\begin{proof}
   It follows from Remark \ref{max ieq-rem}. Indeed, we observe that 
    \begin{align*}
        \abs{L^{ \delta}_k (f)(x)}
        &=\abs{f(L^{ \delta}_k (x))}\\
        &=\abs{\int_X f(s)( L^{ \delta}_k (x)(s)) d \mu(s) }\\ 
        &= \abs{\int_X \chi_D(s) \L^{\delta}_k(\chi_F)(s) \alpha_s^*(\phi)(y) d \mu(s)}\\
        & \leq \norm{\phi} \norm{y} \abs{ \int_X \chi_D(s) \L^{\delta}_k(\chi_F)(s) d \mu(s) }\\
        & \leq \norm{\phi} \norm{y} \int_F \L^{\delta}_k (\chi_D)(s) d \mu(s)\\
        &\leq \norm{\phi} \norm{y} \norm{\L^{ \delta}_k (\chi_D)} \mu(F) \\
        &\leq \norm{\phi} \norm{y} \norm{\L^{ \delta}_k} \mu(F) 
        \leq \delta \norm{\phi} \norm{y} \mu(F).
    \end{align*}
    This completes the proof.
\end{proof}

	For the next theorem, we assume that $\CM$ is a von Neumann algebra with f.n.  state $\rho$ and  $(G,d, \mu)$ be a group metric measure space satisfying doubling condition. Furthermore, $(\CM,G, \alpha)$ is a non-commutative dynamical system such that  $\rho \circ \alpha_s = \rho$ for all $s \in G$.
Now using Theorem \ref{maximal ineq for martingale}, we will prove a weak type $(1,1)$- property for ergodic averages.

\begin{thm}\label{maximal ineq for averages}
		Let $\phi \in \CM_{* +}$. Then the sequence $(A_k(\phi))_{k \geq 1}$ has the property: there exists $\kappa \in \N$ such that for all $n \in \N$ and $\eps, \delta, \upsilon>0$, there exists projections $\{e^{i, \delta, \ups}_{n, \eps}:  i \in [\kappa] ~\}$ such that for all $ i \in [\kappa]$
		\begin{enumerate}
			\item $\rho(1- e^{i, \delta, \ups}_{n, \eps}) < \frac{1+ \ups }{\eps}   \norm{  \phi} +\ups $
			\item $A_k(\phi)(x) \leq C\delta \norm{\phi} \norm{x} +  C\eps \rho(x)$ for all $x \in e^{i, \delta, \ups}_{n, \eps} \CM_+ e^{i, \delta, \ups}_{n, \eps}$ and $k \in [n]$.
		\end{enumerate}
		The constant $C=c_1c_2$ only depends on the 	group $G$.	
	\end{thm}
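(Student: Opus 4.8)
The plan is to run a Calderón-type transference that rewrites the ergodic averages $A_k(\phi)$ as the group averages $A'_{r_k}$ on $\CN = L^\infty(G,\CB,\mu)\otimes\CM$, and then to feed these into the martingale weak-type bound of Theorem \ref{maximal ineq for martingale}. Fix $n$ and $\eps,\delta,\ups>0$, choose a large radius $R$, and put $D:=V_R$. Introduce the positive element $f(t):=\chi_D(t)\,\alpha_t^*(\phi)\in\CN_* = L^1(G;\CM_*)$; since each $\alpha_t$ is a $\rho$-preserving automorphism we have $\norm{\alpha_t^*(\phi)}=\phi(1)=\norm\phi$, hence $\norm f=\mu(D)\norm\phi$. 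The decisive transference identity is that, whenever $V(s,r_k)\subseteq D$ — which by right invariance holds for all $s\in V_{R-r_n}$ and all $k\in[n]$ — one has $A'_{r_k}f(s)=\alpha_s^*(A_k(\phi))$, because $\alpha_{ts}=\alpha_t\alpha_s$ and $\chi_D(ts)=1$ on $V_{r_k}s=V(s,r_k)$. Thus, evaluated at a good point $s\in V_{R-r_n}$, the functionals $A_k(\phi)$ are literally the group averages $A'_{r_k}f$, twisted only by the $\rho$-preserving automorphism $\alpha_s$.

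I would then dominate and apply the martingale inequality family by family. By Corollary \ref{dominated by martingale-cor} with $F=D$, for every $s\in D$ and every $k\in[n]$ there are $i\in[\kappa]$ and $k_{r_k}\in O'_{r_k}(D)$ with $A'_{r_k}f(s)\le C\,E^i_{k_{r_k}}f(s)$, where $C=c_1c_2$. Since $\bigcup_{k\in[n]}O'_{r_k}(D)$ is a finite set of integers, I fix a shift $l$ so that all these levels are covered by the decreasing martingale sequences $(E^i_{\,\cdot\,-l-1})_{i\in[\kappa]}$. For each $i$ I invoke Theorem \ref{maximal ineq for martingale} in the product setting of Remark \ref{max ieq-rem} (with the weight $\tilde{\rho}$), with parameters $(\delta,\eps)$, getting a projection $\CE^i\in\CN$ with $\tilde{\rho}(1-\CE^i)<\norm f/\eps=\mu(D)\norm\phi/\eps$ and, on $\CE^i\CN_+\CE^i$, the estimate $E^i_{k-l-1}(f)\le L^\delta_k(f)+\eps\,\tilde{\rho}(\cdot)$. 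The surviving error term $L^\delta_k(f)$ is exactly the object controlled by Lemma \ref{maximal ineq for averages-lem} on elements $\chi_F\otimes y$.

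The descent back to $\CM$ is carried out by testing against $X=\chi_F\otimes y$ with $F\subseteq V_{R-r_n}$ and $y\in\CM_+$. Here $\tilde{\rho}(X)=\mu(F)\rho(y)$ and, by Lemma \ref{maximal ineq for averages-lem}, $L^\delta_k(f)(X)\le\delta\,\mu(F)\norm y\norm\phi$, while the transference identity gives $A'_{r_k}f(X)=\int_F A_k(\phi)(\alpha_s(y))\,d\mu(s)$. Chaining transference, domination and the martingale estimate and then dividing by $\mu(F)$ produces an averaged inequality $\frac{1}{\mu(F)}\int_F A_k(\phi)(\alpha_s(y))\,d\mu(s)\le C\delta\norm\phi\norm y+C\eps\rho(y)$, the factor $\mu(F)$ cancelling precisely because of the $\mu(F)$ in Lemma \ref{maximal ineq for averages-lem}; shrinking $F$ around a Lebesgue point then gives the pointwise statement at a single $s_0$. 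For the exceptional sets I write $\tilde{\rho}(1-\CE^i)=\int_G\rho(1-\CE^i(s))\,d\mu(s)<\mu(D)\norm\phi/\eps$, so Chebyshev bounds $\mu\{s:\rho(1-\CE^i(s))\ge\frac{1+\ups}{\eps}\norm\phi\}<\frac{\mu(V_R)}{1+\ups}$. Because the doubling condition (Eq.~\ref{doub cond-defn}) forces polynomial volume growth, $\mu(V_R\setminus V_{R-r_n})/\mu(V_R)\to0$ as $R\to\infty$, so for $R$ large this exceptional set does not exhaust $V_{R-r_n}$; picking $s_0\in V_{R-r_n}$ outside it and setting $e^{i,\delta,\ups}_{n,\eps}:=\alpha_{s_0}(\CE^i(s_0))$ gives property (1) with the clean constant (since $\alpha_{s_0}$ preserves $\rho$), and transporting the chain through $\alpha_{s_0}$ yields the family-$i$ part of property (2) — the bound for those radii $k$ whose domination selects family $i$ — with $C=c_1c_2$.

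I expect the main obstacle to be precisely the interaction between the family-dependence of the domination and this descent. Because Corollary \ref{dominated by martingale-cor} selects the martingale family $i$ depending on both the point and the radius, the maximal control of $A_k(\phi)$ over $k\in[n]$ is not carried by any single family but must be assembled from the $\kappa$ projections $\{e^{i,\delta,\ups}_{n,\eps}\}$: for each $k$ one invokes the bound of the family assigned by the domination, so that the full inequality for every $k\in[n]$ is recovered on any corner lying below all of them. Organising this so that the complement bound in (1) stays free of a factor $\kappa$ while each $k$ is still covered is the delicate combinatorial and measure-theoretic bookkeeping, and is exactly what dictates the $[\kappa]$-indexing and the joint dependence of the projections on $n,\eps,\delta,\ups$. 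A second technical point is that $\tilde{\rho}$ is only a semifinite weight on $\CN$, so Theorem \ref{maximal ineq for martingale} must be read on the finite corner $L^\infty(D)\otimes\CM$, where $\tilde{\rho}$ is a multiple of a state, and the normalising mass $\mu(D)$ must be propagated consistently through Lemma \ref{maximal ineq for averages-lem} so that it cancels and leaves the clean error term $C\delta\norm\phi\norm x$.
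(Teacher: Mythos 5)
Your overall strategy is the paper's: transfer $A_k(\phi)$ to the averages $A'_{r_k}$ of $f(t)=\chi_D(t)\alpha_t^*(\phi)$ on $\CN=L^\infty(G)\otimes\CM$, dominate by the Hyt\"onen--Kairema martingales (Corollary \ref{dominated by martingale-cor}), apply the Neveu-based martingale inequality (Theorem \ref{maximal ineq for martingale}) on a finite corner where $\tilde{\rho}$ is finite, control the error term via Lemma \ref{maximal ineq for averages-lem} by testing against $\chi_F\otimes y$, and descend to $\CM$ by selecting a good point $s_0$ and conjugating by the $\rho$-preserving $\alpha_{s_0}$. The one substantive deviation is your choice of truncation set, and that is where there is a genuine gap: you take $D=V_R$ and assert that the doubling condition forces $\mu(V_R\setminus V_{R-r_n})/\mu(V_R)\to 0$ as $R\to\infty$. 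Doubling gives a polynomial upper bound on $\mu(V_R)$, but it does not yield this annulus estimate as a limit over all radii; the asymptotic invariance of balls is precisely the nontrivial content of Lemma \ref{lacunary seq} (Tessera), and even there it is obtained only along a lacunary subsequence. What you actually need --- a single large $R$ with $\mu(V_R)/\mu(V_{R-r_n})<1+\ups$ --- is true (by a pigeonhole iteration of the doubling inequality, or by taking $R=r_N$ from Lemma \ref{lacunary seq} with $(3/2)^N\ge r_n$), but not for the reason you give and not as a full limit. The paper sidesteps this entirely: it sets $D=KF$ with $K=\cup_{l\le n}\mathrm{supp}(\mu_l)$ and $F$ a F\o lner set supplied by amenability with $\mu(KF)/\mu(F)<1+\ups$, which is exactly where the constant $\frac{1+\ups}{\eps}\norm{\phi}$ in (1) originates. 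Your Chebyshev selection of $s_0$ then replaces the paper's averaging-plus-infimum argument and, once the truncation is repaired, gives a marginally cleaner bound.

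The second point you flag --- that Corollary \ref{dominated by martingale-cor} assigns the martingale family $i$ depending on both the point and the radius, so no single family controls all $k\in[n]$ --- is real, but you leave it unresolved: working ``on any corner lying below all of them'' means passing to the meet of the $\kappa$ projections, which multiplies the complement mass in (1) by $\kappa$ and overshoots the stated bound. The paper's bookkeeping is to apply Theorem \ref{maximal ineq for martingale} with exceptional mass $\norm{f}_1/(\kappa\eps)$ for each family and then, at the chosen point, to bound $\rho(1-e^{i_s,\delta}(s))$ by the sum over all $\kappa$ families, so the total stays at $\norm{f}/\eps$ and the final constant is $\frac{1+\ups}{\eps}\norm{\phi}+\ups$. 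You would need to build this (or an equivalent normalization) into your application of the martingale inequality to land on the constants as stated.
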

	
	\begin{proof}
		Let $n \in \N$ and $\eps, \delta, \ups>0$. Further let $K:= \cup_{l=1}^n supp(\mu_l)$. Then $K$ is compact. By amenability of $G$, choose a compact set $F$ such that 
		\begin{align*}
			\frac{m(KF)}{m(F)} < 1+ \ups.
		\end{align*}
		 As $ e \in K$, we note that  $ F \subseteq KF$. Now define a function 
		\begin{align*}
			f(t):= \chi_{KF}(t) \alpha^*_t(\phi).
		\end{align*}
		
		\noindent Observe that $f \in \CN_*$ and we have the following
		\begin{align*}
			\alpha^*_s A_n(\phi)= A_n'f(s) \text{  for all }  s \in F .
		\end{align*}
		
		Furthermore, by Lemma \ref{finite set}, we notice that  for all $ k \in [n]$, the set $O'_{r_k}(F) \subseteq \Z$ is finite. Then find a $ l \in \N$ such that 
		\begin{align*}
			\overset{n}{ \underset{k=1}{\cup}} O'_{r_k}(F)  \subseteq [-l \mathrel{{.}\,{.}}\nobreak l]
		\end{align*}

Now we fix a $n \in \N$. Suppose $m_n = 2\text{ max}\{ l, n\} $.  Fix $i\in [\kappa] $  and let $ t \in \{ -l, \cdots, m_n\} $. Since $\{ \CP_t^i\}$ is a decreasing sequence of partition, so, by using Remark \ref{Hytonen rem} and the compactness of $KF$,   we can find $ \{\CP^i_t( j_p^t) \}_{ p=1}^{s_t}$
such that 
\begin{align*}
    KF \subseteq \cup_{ p =1}^{ s_t} \CP^i_t( j_p^t) \subseteq \cup_{ p =1}^{ {s_{m_n}   }}  \CP^i_{m_n}( j_p^{m_n}),
\end{align*}
where $\CP^i_t( j_p^t)$ are finitely many elements of the partition $\CP^i_t$. Suppose 
$$ Y^i_{t} = \cup_{ p =1}^{ {s_{m_n}   }}  \CP^i_{t}( j_p^{t}),~ t \in \{ -l, \cdots, m_n \}.$$
Then observe that 
\begin{align*}
    Y^i_{-l} \subseteq Y^i_{-(l-1)}\subseteq \cdots \subseteq  Y^i_{m_n}.
\end{align*}

Now consider $M_{t}^i = L^\infty( Y_{m_n}^i,  \CB^i_t|_{ Y^i_{m_n}}, \mu)$  for all $ t \in \{ -l, \cdots, m_n\} $, where $ \CB^i_t$ be the $\sigma$-algebra generated by $\CP^i_t$.
Now we have the following ;
\begin{enumerate}
    \item $\mu( Y^i_{m_n}) < \infty $ (cf. Remark \ref{Hytonen rem}),

    \item $M_{m_n}^i \otimes \CM \subseteq M_{m_n-1}^i \otimes \CM \subseteq \cdots \subseteq M_{-l}^i \otimes \CM \subseteq L^\infty( Y^i_{ m_n}, \CB, \mu)\otimes \CM $ and

    \item $ E_t^i( L^\infty( Y^i_{ m_n}, \CB, \mu)\otimes \CM ) \subseteq L^\infty( Y^i_{ m_n}, \CB, \mu)\otimes \CM  $ for all $ t \in \{ -l, \cdots, m_n\} $, as $E_t^i(g)(s)= \frac{1}{\mu(\CP^i_t(s))} \int_{\CP^i_t(s)} g(z) d \mu(z)$ for all $g \in L^\infty( Y^i_{ m_n}, \CB, \mu)\otimes \CM$.\\
\end{enumerate}

We use the same notation for  the restriction of $E_t^i $ on   $L^\infty( Y^i_{ m_n}, \CB, \mu)\otimes \CM $ also use the same notation for the restriction of $ \tilde{\rho}$ on $ L^\infty( Y^i_{ m_n}, \CB, \mu)\otimes \CM $, which is a f.n positive linear functional as $\mu(Y_{m_n}^i) < \infty$.

Thus,  we note that  for each  $ i \in  [\kappa]$,  $ ( E_k^i)_{ k = -l}^{ m_n} $ is a \textcolor{black}{decreasing} finite  sequence of martingales on $ L^\infty( Y^i_{ m_n}, \CB, \mu)\otimes \CM $ preserving  the f.n. positive linear functional $ \tilde{\rho}$.  Therefore,  
by repeated use of Theorem \ref{maximal ineq for martingale}, we obtain projections $(e ^{1,\delta}_{n, \eps}), \ldots, (e^{\kappa, \delta}_{n, \eps})$ in $L^\infty( Y^i_{ m_n}, \CB, \mu)\otimes \CM \subseteq \CN$ such that for all $ i \in [\kappa]$
		\begin{enumerate}
			\item $\tilde{\rho}(1-e^{i, \delta}_{n, \eps}) < \frac{\norm{f}_1}{\kappa \eps } $  and
			

            \item[(2)] $E_{k-l-1}^{i}(f)(x) \leq  L_k^{i, \delta}(f)(x) + \eps\widetilde{\rho}( x  ) $ for all $ x \in e_{n, \eps}^{i, \delta} \CN_+ e_{n, \eps}^{i, \delta}$ and $ k \in [2\max \{n,l\}]$.
		\end{enumerate}

    Suppose, $x=1_{E} \otimes y$, with  $y \in \CM_+$, and $E$ is measurable subset of $Y^i_{m_n}$. Then we note the following;

    \begin{enumerate}
        \item $E_{k-l-1}^{i}(f)(1_E \otimes y)= \int_E E_{k-l-1}^{i}(f)(s) y  d \mu(s)$,

        \item $\abs{L_k^{i, \delta}(f)(1_E \otimes y)} \leq \delta \mu(E) \norm{\phi} \norm{y}$, (cf. Lemma \ref{maximal ineq for averages-lem}),

        \item $\tilde{\rho}(1_E \otimes y)= \int_E \rho(y) d \mu(s)$.
    \end{enumerate}

\noindent    Therefore, varying $x$ over the characteristic function of  the form $1_{E} \otimes y$, with  $y \in \CM_+$, we obtain

    \begin{align}\label{eq. 1}
        E_{k-l-1}^i(f)(s)(e_{n, \eps}^{i, \delta}(s) y e_{n, \eps}^{i, \delta}(s ) ) 
        &\leq  \delta \norm{\phi} \norm{y} +  \eps \rho(  e_{n, \eps}^{i, \delta}(s) y e_{n, \eps}^{i, \delta}(s)  )
		\end{align}
		for all  $y \in \CM_+$  and a.e $s \in F$. Also by Corollary \ref{dominated by martingale-cor}, we note that for all $k \in [n]$ and $s \in F$ there exists $k_r \in	\overset{n}{ \underset{k=1}{\cup}}  O'_{r_k}(F)$ such that for all $y \in \CM_+$
		\begin{align*}
			A_k'(f)(s)( e_{n, \eps}^{i_s, \delta}(s)  \alpha_{  s^{-1} }(y) e_{n, \eps}^{i_s, \delta}(s) )
			\leq 
			C E_{k_r}^{i_s}(f)(s) ( e_{n, \eps}^{i_s, \delta}(s)  \alpha_{  s^{-1} }(y ) e_{n, \eps}^{i_s, \delta}(s) ).
		\end{align*}
		
		\noindent	Then, for a.e. $s \in F$ denoting $p_s:= e_{n, \eps}^{i_s, \delta}(s)$ we have for all
		$y \in M_+$ and $ k \in [n]$, 
		
		\begin{align*}
			A_k(\phi) (  \alpha_s( p_s)  y  \alpha_s( p_s)      ) =& \alpha^*_s A_k(\phi)( p_s  \alpha_{  s^{-1} }(y ) p_s )\\
			=& A_k'(f)(s)( p_s  \alpha_{  s^{-1} }(y ) p_s )\\
			\leq & 
			C E_{k_r}^{i_s}(f)(s) ( p_s  \alpha_{  s^{-1} }(y ) p_s )\\
			\leq& C \delta \norm{\phi} \norm{y} + C \eps \rho( p_s  \alpha_{  s^{-1} }(y ) p_s ), \text{ as } k_r \in  [-l \mathrel{{.}\,{.}}\nobreak l]  \\
			\leq& C \delta \norm{\phi} \norm{y} + C \eps \rho(  \alpha_s( p_s)  y  \alpha_s( p_s)).
		\end{align*} 
		
		\noindent 	Therefore, for a.e. $s \in F$, and $y \in M_+$ and $ k \in [n]$, we have 
		\begin{align*}
			A_k(\phi) (  \alpha_s( e_{n, \eps}^{i_s, \delta}(s))  y  \alpha_s( e_{n, \eps}^{i_s, \delta}(s))      )
			\leq C \delta \norm{\phi} \norm{y} + C \eps \rho(  \alpha_s( e_{n, \eps}^{i_s, \delta}(s))  y  \alpha_s( e_{n, \eps}^{i_s, \delta}(s)).
		\end{align*}
		
\noindent		Finally note that for all $i \in [\kappa] $, there exists $s_{i, \ups } \in F$ such that 
		\begin{align*}
			\rho(1- \alpha_{s_{i, \ups}}( e^{  i_{ s_{i, \ups}, \delta}}_{n, \eps}(s_{i, \ups})))= \rho(1-  e^{  i_{ s_{i, \ups}, \delta}}_{n, \eps}(s_{i, \ups})) \leq \inf_{s \in F} \rho(1- e^{i_s, \delta}_{n, \eps}(s)) + \ups,
		\end{align*}
		Now suppose  $q_{n, \eps}^{i,\delta, \ups} := \alpha_{s_{i, \ups}}( e^{  i_{ s_{i, \ups} , \delta}}_{n, \eps}(s_{i, \ups}))$.
		Then we note that 
		\begin{align*}
			\rho(1- q_{n, \eps}^{i,\delta, \ups}) &\leq \frac{1}{\mu(F)} \sum_{ i=1}^\kappa\int_F \rho(1- e^{i, \delta}_{n ,\eps}(s)) d\mu(s) + \ups\\
			&\leq \frac{1}{\mu(F)} \sum_{ i=1}^\kappa \int_G \rho(1- e^{i, \delta}_{n, \eps}(s)) d\mu(s) + \ups \\
			& \leq \frac{1}{\mu(F)}    \frac{ \kappa  \norm{   f}}{ \kappa \eps} + \ups \\
			& \leq \frac{\mu( KF)}{\mu(F)}    \frac{   \norm{  \phi}}{ \eps} + \ups \\
			& \leq \frac{1+ \ups }{\eps}   \norm{  \phi} +\ups.
		\end{align*} 
		
		\noindent	Furthermore,   for all $ x \in \CM_+ $, $ i \in [\kappa]$ and $ k \in [n] $, we observe that
		\begin{align*}
			A_k(\phi) (  q_{n, \eps}^{i,\delta, \ups}  x  q_{n, \eps}^{i,\delta, \ups} ) 	=&  A_k(\phi) ( \alpha_{s_{i, \ups}}( e^{  i_{ s_{i, \ups} , \delta}}_{n, \eps}(s_{i, \ups}))  x \alpha_{s_{i, \ups}}( e^{  i_{ s_{i, \ups} , \delta}}_{n, \eps}(s_{i, \ups}))   ) \\
			\leq& C \delta \norm{\phi} \norm{x} + C \eps \rho(  \alpha_{s_{i, \ups}}( e^{  i_{ s_{i, \ups} , \delta}}_{n, \eps}(s_{i, \ups}))  x  \alpha_{s_{i, \ups}}( e^{  i_{ s_{i, \ups} , \delta}}_{n, \eps}(s_{i, \ups} )))\\
			\leq & C \delta \norm{\phi} \norm{x} + C \eps \rho(  q_{n, \eps}^{i,\delta, \ups}  x  q_{n, \eps}^{i,\delta, \ups} ).
		\end{align*}
		This completes the proof.  
	\end{proof}

\begin{cor}\label{maximal ineq for averages-cor}
		Let $\phi \in \CM_{* s}$. Then the sequence $(A_k(\phi))_{k \geq 1}$ has the property: there exists $\kappa \in \N$ such that for all $n \in \N$ and $\eps, \delta, \upsilon>0$, there exists projections $\{e^{i, \delta, \ups}_{n, \eps}:  i \in [\kappa] ~\}$ such that for all $ i \in [\kappa]$
		\begin{enumerate}
			\item $\rho(1- e^{i, \delta, \ups}_{n, \eps}) < \frac{1+ \ups }{\eps}   \norm{  \phi} +\ups $
			\item $\abs{A_k(\phi)(x)} \leq C\delta \norm{\phi} \norm{x} +  C\eps \rho(x)$ for all $x \in e^{i, \delta, \ups}_{n, \eps} \CM_+ e^{i, \delta, \ups}_{n, \eps}$ and $k \in [n]$.
		\end{enumerate}
  
		The constant $C=c_1c_2$ only depends on the 	group $G$.	
	\end{cor}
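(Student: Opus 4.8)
The plan is to reduce the self-adjoint case to the positive case already handled in Theorem \ref{maximal ineq for averages}. Since $\phi \in \CM_{*s}$, its Jordan decomposition yields positive functionals $\phi_+, \phi_- \in \CM_{*+}$ with $\phi = \phi_+ - \phi_-$ and $\norm{\phi} = \norm{\phi_+} + \norm{\phi_-}$; in particular $\norm{\phi_\pm} \leq \norm{\phi}$. I would then apply Theorem \ref{maximal ineq for averages} separately to $\phi_+$ and $\phi_-$, running each application with the parameter $\ups$ replaced by $\ups/2$ so that the bookkeeping on the weak bound works out.

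This produces, for each sign $\pm$ and each $i \in [\kappa]$, projections $e^{i,\delta,\ups}_{n,\eps,\pm}$ satisfying $\rho(1 - e^{i,\delta,\ups}_{n,\eps,\pm}) < \frac{1+\ups/2}{\eps}\norm{\phi_\pm} + \ups/2$ together with the corresponding average bound on $e^{i,\delta,\ups}_{n,\eps,\pm}\CM_+ e^{i,\delta,\ups}_{n,\eps,\pm}$. For each $i$ I would set $e^{i,\delta,\ups}_{n,\eps} := e^{i,\delta,\ups}_{n,\eps,+} \wedge e^{i,\delta,\ups}_{n,\eps,-}$, the projection onto the intersection of the two ranges. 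Using the standard estimate $1 - (e \wedge f) \leq (1-e) + (1-f)$ together with positivity of $\rho$, one obtains
\[
\rho(1 - e^{i,\delta,\ups}_{n,\eps}) \leq \rho(1 - e^{i,\delta,\ups}_{n,\eps,+}) + \rho(1 - e^{i,\delta,\ups}_{n,\eps,-}) < \frac{1+\ups/2}{\eps}(\norm{\phi_+} + \norm{\phi_-}) + \ups = \frac{1+\ups/2}{\eps}\norm{\phi} + \ups \leq \frac{1+\ups}{\eps}\norm{\phi} + \ups,
\]
which gives part (1).

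For part (2), fix $x \in e^{i,\delta,\ups}_{n,\eps}\CM_+ e^{i,\delta,\ups}_{n,\eps}$. Since $e^{i,\delta,\ups}_{n,\eps} \leq e^{i,\delta,\ups}_{n,\eps,\pm}$, the element $x$ also lies in $e^{i,\delta,\ups}_{n,\eps,\pm}\CM_+ e^{i,\delta,\ups}_{n,\eps,\pm}$, so the conclusion of Theorem \ref{maximal ineq for averages} applies to both $\phi_+$ and $\phi_-$ at $x$. Because $\phi_\pm$ are positive and each $\alpha_s$ preserves positivity, both $A_k(\phi_+)(x)$ and $A_k(\phi_-)(x)$ are nonnegative; writing $A_k(\phi)(x) = A_k(\phi_+)(x) - A_k(\phi_-)(x)$ and using $\abs{a - b} \leq \max(a,b)$ for $a,b \geq 0$, I would bound
\[
\abs{A_k(\phi)(x)} \leq \max\big(A_k(\phi_+)(x),\, A_k(\phi_-)(x)\big) \leq C\delta\norm{\phi}\norm{x} + C\eps\rho(x),
\]
where the last inequality uses $\norm{\phi_\pm} \leq \norm{\phi}$ in the two bounds coming from Theorem \ref{maximal ineq for averages}.

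There is no serious obstacle here: the result is a routine passage from positive to self-adjoint functionals. The only points requiring a little care are the splitting of the error parameter $\ups$ so that the final complement estimate retains the exact constant $\frac{1+\ups}{\eps}\norm{\phi} + \ups$, and the use of $\abs{a-b} \leq \max(a,b)$ (rather than the triangle inequality) to avoid picking up an extra factor of $2$ in the constant $C$ in the average bound.
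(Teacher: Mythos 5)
Your reduction to the positive case is the right instinct, but the step where you intersect the two families of projections has a genuine gap. You invoke the estimate $\rho\bigl(1-(e\wedge f)\bigr)\leq \rho(1-e)+\rho(1-f)$. Note that $1-(e\wedge f)=(1-e)\vee(1-f)$, and the subadditivity $\rho(p\vee q)\leq \rho(p)+\rho(q)$ holds for a \emph{trace} (via the Kaplansky parallelogram law $p\vee q-q\sim p-p\wedge q$), but it fails for a general faithful normal state, which is all that $\rho$ is assumed to be in this section. A concrete failure: in $M_2(\C)$ take $p,q$ the rank-one projections onto $\mathrm{span}(e_1)$ and $\mathrm{span}(\cos\theta\, e_1+\sin\theta\, e_2)$, so $p\vee q=1$, and let $\rho$ have density matrix $\mathrm{diag}(\lambda,1-\lambda)$; then $\rho(p)+\rho(q)\approx 2\lambda+\theta^2$ can be made arbitrarily small while $\rho(p\vee q)=1$. (This is exactly why, later in the paper, meets of projections are only estimated after passing from $\rho$ to the tracial state $\tau$.) So your part (1) does not follow as written, and the operator inequality $1-(e\wedge f)\leq(1-e)+(1-f)$ that you cite as standard is likewise false.

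The paper sidesteps intersections entirely: it applies Theorem \ref{maximal ineq for averages} \emph{once}, to the single positive functional $\psi=\phi_1+\phi_2$ (which satisfies $\norm{\psi}\leq\norm{\phi}$), obtaining one family of projections, and then for $x\geq 0$ uses $\abs{A_k(\phi)(x)}=\abs{A_k(\phi_1)(x)-A_k(\phi_2)(x)}\leq A_k(\phi_1)(x)+A_k(\phi_2)(x)=A_k(\psi)(x)$, since both terms are nonnegative. This gives part (2) with the same constant $C$ and no factor of $2$ (your $\max$ trick is unnecessary), and part (1) comes directly from the bound for $\psi$ with the original $\ups$. If you want to keep your two-application scheme, you would have to justify the complement estimate for the meet under a non-tracial $\rho$, which you cannot do in general; the single-functional trick is the fix.
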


 \begin{proof}
     Let $\phi \in \CM_{* s}$. Note that, there exists $\phi_1, \phi_2 \in \CM_{* +}$ such that $\phi= \phi_1 - \phi_2$ and $\norm{\phi}= \norm{\phi_1}+ \norm{\phi_2}$. Then by applying Theorem \ref{maximal ineq for averages} for $\psi= \phi_1 + \phi_2$ we obtain for all $n \in \N$ and $\eps, \delta, \upsilon>0$, there exists projections $\{e^{i, \delta, \ups}_{n, \eps}:  i \in [\kappa] ~\}$ such that for all $ i \in [\kappa]$

		\begin{enumerate}
			\item $\rho(1- e^{i, \delta, \ups}_{n, \eps}) < \frac{1+ \ups }{\eps}   \norm{  \psi} +\ups \leq \frac{1+ \ups }{\eps}   \norm{  \phi} +\ups$, since $\norm{\psi} \leq \norm{\phi}$,
			\item $A_k(\psi)(x) \leq C\delta \norm{\psi} \norm{x} +  C\eps \rho(x)$ for all $x \in e^{i, \delta, \ups}_{n, \eps} \CM_+ e^{i, \delta, \ups}_{n, \eps}$ and $k \in [n]$,
		\end{enumerate}

  where $C=c_1c_2$. Furthermore, for all $x \in e^{i, \delta, \ups}_{n, \eps} \CM_+ e^{i, \delta, \ups}_{n, \eps}$ we have
  \begin{align*}
      \abs{A_k(\phi)(x)}
      &=\abs{A_k(\phi_1 - \phi_2)(x)}\\
      &\leq \abs{A_k(\phi_1 + \phi_2)(x)}
      = A_k(\psi)(x) \\
      &\leq C\delta \norm{\psi} \norm{x} +  C\eps \rho(x)\\
      &\leq C\delta \norm{\phi} \norm{x} +  C\eps \rho(x), \text{ since } \norm{\psi} \leq \norm{\phi}.
  \end{align*}
  This completes the proof.
 \end{proof}

	\begin{rem}
		We note that given  $\varphi \in \CM_*$ and  the sequence $(A_k(\varphi))_{k \geq 1}$ satisfies the conclusions of the Theorem  \ref{maximal ineq for averages}. Then actually, one can call that  $(A_k(\varphi))_{k \geq 1}$ satisfies weak type $(1, 1)$ if it satisfies the conclusion of the Theorem  \ref{maximal ineq for averages}. But, for the notational simplicity we will not do that. We point out that the difference between the standard definition of weak type $(1,1)$ with our definition  of weak type $(1, 1)$ (see Definition \ref{weak11}) are more dependency on the collection of projections and extra error term ($\delta \norm{\phi}\norm{x}$ in \ref{weak11}). 
	\end{rem}

	\section{\textbf{Pointwise ergodic theorems} }\label{Pointwise ergodic theorems}
	
	In this section, we will study pointwise convergence of specific ergodic averages. With the weak type maximal inequality in our disposal (cf. \S \ref{Weak type inequality}), now we need  to prove an appropriate Banach principle and pointwise convergence of elements from a dense subset. 
	
	Throughout this section, $\CM$ is  assumed to be a von Neumann algebra with a f.n. tracial state $\tau$ and $(G,d, \mu)$ is a group metric measure space satisfying doubling condition. We further assume that $(\CM, G, \alpha)$ is a non-commutative dynamical system such that there exists a f.n. state $\rho$ on $\CM$ such that $\rho \circ \alpha_s = \rho$ for all $s \in G$.
	
	Before we proceed to the main results of this section, let us recall the following structural property of $(G,d, \mu)$. For proof of this fact, we refer to \cite[Proposition 17]{tessera2007volume} (also cf. \cite[Lemma 6.9]{Hong2021}). 
	
	\begin{lem}\label{lacunary seq}
		There exists a lacunary sequence $(r_i)_{i \in \N}$ in $\R_+$ such that for all $i \in \N$  
		\begin{enumerate}
			\item[(i)] $2^i \leq r_i < 2^{i+1}$ and 
			\item[(ii)] $\frac{\mu(V(x, r_i) \setminus V(x, r_i- (3/2)^i))}{\mu(V(x, r_i))} \leq C' (3/4)^i$ for all $x \in G$,
		\end{enumerate}
		where the constant $C'$ depends only on the doubling condition.
	\end{lem}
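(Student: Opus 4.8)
The plan is to first use the invariance of the metric to make condition (ii) independent of $x$, and then to produce the radius $r_i$ by an averaging (pigeonhole) argument over thin concentric shells filling the dyadic annulus $V_{2^{i+1}} \setminus V_{2^i}$, the doubling condition being what controls the total mass of that annulus.

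First I would record the single-variable reduction. Writing $v(r) := \mu(V_r)$, the invariance of $d$ gives $V(x,r) = V_r x$, so by right invariance of $\mu$ one has $\mu(V(x,r)) = v(r)$ and, for $0 < r' < r$, $\mu(V(x,r) \setminus V(x,r')) = v(r) - v(r')$ for every $x \in G$. Hence (ii) is equivalent to the $x$-free inequality
\begin{equation*}
v(r_i) - v(r_i - (3/2)^i) \le C' (3/4)^i\, v(r_i),
\end{equation*}
and it suffices to produce a single $r_i \in [2^i, 2^{i+1})$ satisfying it. The function $v$ is nondecreasing and, by the doubling condition (\ref{doub cond-defn}), satisfies $v(2r) \le c_1 v(r)$; in particular $v(2^{i+1}) \le c_1 v(2^i)$.

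Next I would run the averaging argument. Fix $i$ and set $N_i := \lfloor (4/3)^i \rfloor$ together with the radii $t_j := 2^i + j (3/2)^i$ for $0 \le j \le N_i$. Since $N_i (3/2)^i \le (4/3)^i (3/2)^i = 2^i$, these satisfy $2^i = t_0 < t_1 < \cdots < t_{N_i} \le 2^{i+1}$, so the shells $V_{t_j} \setminus V_{t_{j-1}}$ are pairwise disjoint and contained in $V_{2^{i+1}} \setminus V_{2^i}$. Telescoping and applying doubling,
\begin{equation*}
\sum_{j=1}^{N_i} \bigl( v(t_j) - v(t_{j-1}) \bigr) = v(t_{N_i}) - v(2^i) \le v(2^{i+1}) - v(2^i) \le (c_1 - 1) v(2^i),
\end{equation*}
so by the pigeonhole principle some $j^* \in [N_i]$ obeys $v(t_{j^*}) - v(t_{j^*-1}) \le (c_1 - 1) v(2^i)/N_i$. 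I would then take $r_i := t_{j^*}$, noting $r_i - (3/2)^i = t_{j^*-1}$ and $r_i \in [2^i, 2^{i+1})$ (shrinking $N_i$ by one in the borderline case $t_{N_i} = 2^{i+1}$).

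Finally, since $v(r_i) \ge v(2^i)$, the choice of $j^*$ yields
\begin{equation*}
\frac{v(r_i) - v(r_i - (3/2)^i)}{v(r_i)} \le \frac{(c_1 - 1) v(2^i)/N_i}{v(2^i)} = \frac{c_1 - 1}{N_i} \le C' (3/4)^i,
\end{equation*}
where $C'$ absorbs the comparison $N_i \ge \tfrac12 (4/3)^i$ (valid for large $i$, the finitely many small $i$ being handled by enlarging $C'$), so $C'$ depends only on the doubling constant $c_1$; this gives (i) and (ii). For lacunarity I would simply observe that $2^i \le r_i < 2^{i+1}$ forces $r_{i+2} > 2 r_i$, so $(r_i)$ grows geometrically. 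The main obstacle is the bookkeeping in the averaging step: the shell width $(3/2)^i$ and the count $N_i \approx (4/3)^i$ must be chosen so that the shells exactly fill the dyadic annulus of width $2^i$ while producing the decay rate $(3/4)^i = N_i^{-1}$, and the doubling condition is precisely what keeps the annular mass comparable to $v(2^i)$, so that the average mass per shell decays like $(3/4)^i$.
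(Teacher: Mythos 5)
Your proof is correct: the reduction to the single-variable volume function via invariance of $d$ and right invariance of $\mu$ is valid, and the pigeonhole over $N_i\approx(4/3)^i$ disjoint shells of width $(3/2)^i$ filling the dyadic annulus, with the doubling condition bounding the total annular mass by $(c_1-1)v(2^i)$, delivers exactly the stated estimate with $C'$ depending only on $c_1$. The paper itself gives no proof of this lemma, deferring to \cite[Proposition 17]{tessera2007volume} and \cite[Lemma 6.9]{Hong2021}, and your argument is essentially the standard one carried out in those references.
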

	
	As we have $\rho \circ \alpha_s = \rho$ for all $s \in G$,  observe that one can define a contraction $u_s$ on the $GNS$ space $L^2(\CM, \rho)$ defined by 
	\begin{align*}
		u_s(x \Omega_\rho)= \alpha_s(x) \Omega_\rho,
	\end{align*}
	satisfying $u_s \circ u_t= u_{st}$ for all $s,t \in G$, where $\Omega_\rho$ is the associated cyclic, separating vector in $L^2(\CM, \rho)$. Note that the Hilbert space inner product and norm in the aforesaid $GNS$ space will be denoted by $\inner{\cdot}{\cdot}_\rho$ and $\norm{\cdot}_\rho$ respectively. Now we have the following mean ergodic theorem.
	
	\begin{thm}\label{Mean erg}
		Let $(r_i)_{i \in \N}$ be the lacunary sequence as in Lemma \ref{lacunary seq}. Then the averages 
		\begin{align*}
			A_{r_i}x := \frac{1}{\mu(V_{r_i})} \int_{V_{r_i}} u_s(x) d \mu(s), ~ x \in L^2(\CM, \rho)
		\end{align*}
		converges in $\norm{\cdot}_\rho$.
	\end{thm}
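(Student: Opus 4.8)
The plan is to prove this by the classical von Neumann mean ergodic decomposition. First I would note that the contractions $u_s$ constructed in the text are in fact unitaries: from $u_s u_{s^{-1}} = u_e = I$ and the fact that both $u_s$ and $u_{s^{-1}}$ are contractions, one gets $\norm{\xi}_\rho = \norm{u_{s^{-1}}u_s\xi}_\rho \le \norm{u_s\xi}_\rho \le \norm{\xi}_\rho$, so each $u_s$ is a surjective isometry. Hence $(u_s)_{s\in G}$ is a strongly continuous unitary representation of $G$ on $L^2(\CM,\rho)$, the Bochner integrals defining $A_{r_i}$ are well posed, and $\norm{A_{r_i}} \le 1$ since each is an average of isometries. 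Let $\CI := \{\xi : u_s\xi = \xi \text{ for all } s\in G\}$ be the invariant subspace and $P$ the orthogonal projection onto it. Because the $u_g$ are unitary, one has the standard decomposition
\[
L^2(\CM,\rho) = \CI \oplus \overline{\mathrm{span}}\{\,u_g\xi - \xi : g\in G,\ \xi\in L^2(\CM,\rho)\,\},
\]
and the goal is to show $A_{r_i} \to P$ in the strong operator topology, i.e. convergence of $A_{r_i}\xi$ on each summand.

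On $\CI$ the claim is immediate, since $A_{r_i}\xi = \xi$ for every invariant $\xi$. On the orthogonal complement, the uniform bound $\norm{A_{r_i}}\le 1$ reduces the problem, by a routine density and linearity argument, to proving $\norm{A_{r_i}\eta}_\rho \to 0$ for a single coboundary $\eta = u_g\xi - \xi$.

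The boundary estimate for such coboundaries is the crux of the argument, and the step I expect to give the most trouble. Using $u_s u_g = u_{sg}$, the right-invariance of $\mu$, and the substitution $t = sg$, we obtain
\[
A_{r_i}(u_g\xi) = \frac{1}{\mu(V_{r_i})}\int_{V_{r_i}} u_{sg}\xi\, d\mu(s) = \frac{1}{\mu(V_{r_i})}\int_{V_{r_i}g} u_t\xi\, d\mu(t),
\]
so that the overlapping region $V_{r_i}\cap V_{r_i}g$ cancels and
\[
\norm{A_{r_i}(u_g\xi - \xi)}_\rho \le \frac{\mu(V_{r_i}\,\triangle\,V_{r_i}g)}{\mu(V_{r_i})}\,\norm{\xi}_\rho.
\]
The invariance of $d$ (Eq.~\ref{inv metric-defn}) identifies $V_{r_i}g = V(g,r_i)$, and writing $\ell := d(e,g)$ the triangle inequality yields $V(e,r_i-\ell)\subseteq V(g,r_i)$ and $V(g,r_i-\ell)\subseteq V(e,r_i)$; hence
\[
V_{r_i}\,\triangle\,V_{r_i}g \subseteq \big(V(e,r_i)\setminus V(e,r_i-\ell)\big)\cup\big(V(g,r_i)\setminus V(g,r_i-\ell)\big).
\]
For $i$ large enough that $\ell < (3/2)^i$, each of these shells is contained in the corresponding shell of width $(3/2)^i$, so Lemma~\ref{lacunary seq}(ii), together with $\mu(V(g,r_i)) = \mu(V_{r_i})$ from right-invariance, bounds the symmetric difference by $2C'(3/4)^i\mu(V_{r_i})$. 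Thus $\norm{A_{r_i}(u_g\xi-\xi)}_\rho \le 2C'(3/4)^i\norm{\xi}_\rho \to 0$, which combined with the two preceding paragraphs gives $A_{r_i}\xi \to P\xi$ in $\norm{\cdot}_\rho$ for every $\xi$, establishing the claimed convergence.
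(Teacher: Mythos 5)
Your proof is correct and follows essentially the same route as the paper: the mean ergodic decomposition into invariant vectors plus the closed span of coboundaries, followed by the observation that $A_{r_i}(u_g\xi-\xi)$ is controlled by $\mu(V_{r_i}\,\triangle\,V_{r_i}g)/\mu(V_{r_i})$, which the shell estimate of Lemma \ref{lacunary seq}(ii) bounds by $2C'(3/4)^i$. The only cosmetic difference is that the paper writes the symmetric-difference cancellation as two explicit averages $A^1_{r_i}y - A^2_{r_i}y$ over $V_{r_i}\setminus(V_{r_i}\cap V_{r_i}s)$ and $V_{r_i}s\setminus(V_{r_i}\cap V_{r_i}s)$, and works with contractions rather than first upgrading the $u_s$ to unitaries.
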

	
	\begin{proof}
		Since $(L^2(\CM, \rho), G, u)$ forms a non-commutative dynamical system , we observe that 
		\begin{align*}
			L^2(\CM, \rho):= F_2 \oplus F_2^\perp,
		\end{align*}
		where 
		\begin{align*}
			F_2:= \{x \in L^2(\CM, \rho): u_s(x)=x~ \forall s \in G\} \\
			F_2^\perp:= \overline{span} \{ y- u_s y : y \in L^2(\CM, \rho),~ s \in G \}.
		\end{align*}
		
		Therefore, it is enough to check the convergence of $A_{r_i}x$ for $x= y- u_s y$ where $y \in L^2(\CM, \rho),~ s \in G$. Moreover, note that
		\begin{align*}
			A_{r_i}x= A_{r_i}^1 y - A_{r_i}^2 y,
		\end{align*}
		where
		\begin{align*}
			A_{r_i}^1 y:= \frac{1}{\mu(V_{r_i})} \int_{V_{r_i} \setminus (V_{r_i} \cap V_{r_i} s)} u_s(y) d \mu(s),\\
			A_{r_i}^2 y:= \frac{1}{\mu(V_{r_i})} \int_{V_{r_i}s \setminus (V_{r_i} \cap V_{r_i} s)} u_s(y) d \mu(s).
		\end{align*}
		
		Now since $(3/2)^i \rightarrow \infty$ as $i \rightarrow \infty$, we can choose $i \in \N$ such that $(3/2)^i \geq d(e,s)= \abs{s}$. Moreover, $V_{r_i} \setminus (V_{r_i} \cap V_{r_i} s) \subseteq V_{r_i} \setminus V_{r_i - (3/2)^i}$ and $V_{r_i}s \setminus (V_{r_i} \cap V_{r_i} s) \subseteq V_{r_i}s \setminus V_{r_i - (3/2)^i} s$. Therefore,
		\begin{align*}
			\norm{A_{r_i}^1 y} \leq \frac{\mu(V_{r_i} \setminus V_{r_i - (3/2)^i})}{\mu(V_{r_i})} \norm{y} \leq C (3/4)^i \norm{y}, ~ \text{and } \\
			\norm{A_{r_i}^2 y} \leq \frac{\mu( V_{r_i}s \setminus V_{r_i - (3/2)^i} s )}{\mu(V_{r_i} s)} \norm{y} \leq C (3/4)^i \norm{y}.
		\end{align*}
		
		Therefore, $\norm{A_{r_i} x}$ converges to $0$ as $i \to \infty$.
	\end{proof}
	
	We recall the following Lemma from \cite{Bik-Dip-neveu}.
	
	\begin{lem}\cite[Lemma 4.5]{Bik-Dip-neveu} \label{key lem1}
		Let $(\CM,G, \alpha)$ be a non-commutative dynamical system with f.n. state $\rho$ as described in the beginning of the section. Then we have a non-commutative dynamical system $(\CM',G, \alpha')$, where $\CM'$ is the commutant of $\CM$ such that 
		\begin{align*}
			\inner{\alpha'_{s}(y')x \Omega_\rho}{\Omega_\rho}_\rho= \inner{y' \alpha_{s^{-1}}(x) \Omega_\rho}{\Omega_\rho}_\rho
		\end{align*}
		for all $x \in \CM$, $y' \in \CM'$ and $s \in G$.
	\end{lem}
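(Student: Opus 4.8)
The plan is to produce $\alpha'$ by spatially implementing $\alpha$ on the GNS Hilbert space and then conjugating the commutant by the implementing unitaries. Under the standing assumption that the $\alpha_s$ are $\rho$-preserving unital $*$-automorphisms, the operators $u_s$ introduced just before the lemma are in fact \emph{unitaries}: the computation $\norm{u_s(x\Omega_\rho)}_\rho^2 = \rho(\alpha_s(x^*x)) = \rho(x^*x) = \norm{x\Omega_\rho}_\rho^2$ shows $u_s$ is isometric on the dense domain $\CM\Omega_\rho$, and the relation $u_s u_{s^{-1}} = u_e = \mathrm{id}$ forces invertibility, so $u_s^* = u_{s^{-1}}$. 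Moreover $u_s\Omega_\rho = \alpha_s(1)\Omega_\rho = \Omega_\rho$, and for $x,y \in \CM$ one checks $u_s x u_s^*(y\Omega_\rho) = \alpha_s(x)\,y\Omega_\rho$, so that $u_s x u_s^* = \alpha_s(x)$; that is, $u_s$ implements $\alpha_s$. I would then simply set
\[
\alpha'_s(y') := u_s\, y'\, u_s^*, \qquad y' \in \CM'.
\]

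Next I would verify that $(\CM', G, \alpha')$ is a non-commutative dynamical system in the sense of Definition \ref{nc dyn sys}. Since $u_s$ implements the automorphism $\alpha_s$ it normalizes $\CM$, i.e. $u_s\CM u_s^* = \CM$; conjugation by a unitary carries the commutant to the commutant, so $u_s\CM' u_s^* = \CM'$ and $\alpha'_s$ genuinely maps $\CM'$ into $\CM'$. Each $\alpha'_s$ is a $*$-automorphism of $\CM'$, hence positive, unital and of norm one, giving conditions (2) and (3), and the cocycle relation $u_s u_t = u_{st}$ gives $\alpha'_s \circ \alpha'_t = \alpha'_{st}$. For the continuity condition (1) I would first upgrade the assumed $w^*$-continuity of $s \mapsto \alpha_s(x)$ to strong continuity of $s \mapsto u_s$: the scalar map $s \mapsto \inner{u_s(x\Omega_\rho)}{y\Omega_\rho}_\rho = \rho(y^*\alpha_s(x))$ is continuous for all $x,y \in \CM$, so $u_s$ is weakly continuous on the dense set $\CM\Omega_\rho$ and, being a bounded unitary-valued map, weakly and hence strongly continuous. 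Writing a normal functional on $\CM'$ through vectors $\xi,\eta$, continuity of $s \mapsto \inner{\alpha'_s(y')\xi}{\eta}_\rho = \inner{y'\, u_s^*\xi}{u_s^*\eta}_\rho$ then follows from norm-continuity of $s \mapsto u_s^*\xi$ together with boundedness of $y'$.

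The displayed identity now drops out by a direct computation. Using $u_s^* = u_{s^{-1}}$ one has $u_s^*(x\Omega_\rho) = \alpha_{s^{-1}}(x)\Omega_\rho$ and $u_s^*\Omega_\rho = \Omega_\rho$, whence
\[
\inner{\alpha'_s(y')x\Omega_\rho}{\Omega_\rho}_\rho = \inner{u_s y' u_s^* x\Omega_\rho}{\Omega_\rho}_\rho = \inner{y'\alpha_{s^{-1}}(x)\Omega_\rho}{u_s^*\Omega_\rho}_\rho = \inner{y'\alpha_{s^{-1}}(x)\Omega_\rho}{\Omega_\rho}_\rho.
\]

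The identity itself is formal once the spatial picture is in place; the one genuinely technical step, and the place I would spend care, is the continuity claim — specifically the passage from $w^*$-continuity of the action $\alpha$ to strong continuity of the implementing unitaries $u_s$, and from there to $w^*$-continuity of $s \mapsto \alpha'_s(y')$. Everything else (normalization of the commutant, the cocycle relation, and the fixing $u_s\Omega_\rho = \Omega_\rho$ of the cyclic vector) is a routine consequence of $u_s$ being a unitary that implements $\alpha_s$ and preserves $\Omega_\rho$.
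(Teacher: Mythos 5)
Your construction --- implement $\alpha$ spatially by unitaries $u_s$ on the GNS space and set $\alpha'_s(y')=u_s y' u_s^*$ --- is the natural one and, granting that each $\alpha_s$ is a unital $*$-automorphism, every step you give is correct: the isometry computation, $u_s^*=u_{s^{-1}}$, $u_s x u_s^*=\alpha_s(x)$, the normalization $u_s\CM' u_s^*=\CM'$, the upgrade from $w^*$-continuity of $s\mapsto\alpha_s(x)$ to strong continuity of $s\mapsto u_s$, and the final two-line identity. The difficulty is the clause you open with: ``under the standing assumption that the $\alpha_s$ are $\rho$-preserving unital $*$-automorphisms.'' That is not a standing assumption of this paper. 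Definition \ref{nc dyn sys} only requires the $\alpha_s$ to be positive, uniformly bounded, $w^*$-continuous maps satisfying the group law, and the sentence immediately preceding the lemma introduces $u_s$ as a \emph{contraction}, not a unitary --- a deliberate signal that multiplicativity is not being assumed. Every load-bearing step of your argument uses multiplicativity: $\norm{u_s(x\Omega_\rho)}_\rho=\norm{x\Omega_\rho}_\rho$ needs $\alpha_s(x)^*\alpha_s(x)=\alpha_s(x^*x)$; the identity $u_s x u_s^*=\alpha_s(x)$ is literally equivalent to $\alpha_s(x\alpha_{s^{-1}}(y))=\alpha_s(x)y$; and without it $u_s$ need not normalize $\CM$, so conjugation need not preserve $\CM'$. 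So as written the proof establishes the lemma only in a special case of the stated hypotheses.

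The source being cited proves the statement in the general positive-map setting by a different mechanism, which also appears later in this paper (proof of Proposition \ref{dense set}): the order-theoretic correspondence of \cite[Lemma 4.4]{Bik-Dip-neveu} between normal positive functionals $\nu\leq\lambda\rho$ and elements $y'\in\CM'_+$ with $y'\leq\lambda$, given by $\nu(x)=\inner{y'x\Omega_\rho}{\Omega_\rho}_\rho$. Since $\rho\circ\alpha_{s^{-1}}=\rho$, the functional $x\mapsto\inner{y'\alpha_{s^{-1}}(x)\Omega_\rho}{\Omega_\rho}_\rho$ is again positive, normal and dominated by $\lambda\rho$, so it is represented by a unique element of $\CM'_+$ bounded by $\lambda$; one defines $\alpha'_s(y')$ to be that element and extends by linearity. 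This yields the displayed identity by construction, with no implementing unitaries and no multiplicativity. Your route buys a cleaner, more conceptual picture (and a genuinely stronger conclusion, since $\alpha'$ is then an action by automorphisms), but only on the smaller class of actions; if you want to keep it, you must either justify from the paper's conventions that the action is by unital $*$-automorphisms (the paper does implicitly use this elsewhere, e.g.\ when $\alpha_s(p_s)$ is treated as a projection in the proof of Theorem \ref{maximal ineq for averages}) or fall back on the Radon--Nikodym construction in the commutant.
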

	
	For the group $G$, observe that $\beta= \{\beta_s\}_{s \in G}:= \{\alpha^*_{s^{-1}}\}$ defines an action of the group $G$ on $\CM_*$. We consider the following averages.
	\begin{align*}
		B_{r_i}(\nu) := \frac{1}{\mu(V_{r_i})} \int_{V_{r_i}} \beta_s(\nu) d \mu(s),~ \nu \in \CM_*,
	\end{align*}
	where $(r_i)_{i \in \N}$ is the Lacunary sequence obtained in Lemma \ref{lacunary seq}.

Although the next result is present in \cite{Bik-Dip-neveu}, we include it for the sake of completeness.
 
	\begin{thm}\label{mean erg thm for state}
		For all $\nu \in \CM_*$ there exists $\bar{\nu} \in \CM_*$ such that 
		\begin{align*}
			\bar{\nu}= \norm{\cdot}_1- \lim_{i \to \infty} B_{r_i}(\nu).
		\end{align*}
	\end{thm}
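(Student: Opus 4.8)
The plan is to run the classical mean-ergodic argument for the isometric action $\beta$ on the Banach space $\CM_*$, decomposing $\CM_*$ into the $\beta$-fixed functionals and the closed span of coboundaries. First I would record that each $\beta_s=\alpha^*_{s^{-1}}$ is a positive $\norm{\cdot}_1$-contraction of $\CM_*$, so every averaging operator $B_{r_i}$ is a contraction and $\sup_i\norm{B_{r_i}}\le 1$. Consequently the set $C:=\{\nu\in\CM_*:(B_{r_i}(\nu))_i \text{ converges in }\norm{\cdot}_1\}$ is a closed linear subspace: a standard $3\epsilon$-estimate together with completeness of $\CM_*$ shows that if $\nu$ is a $\norm{\cdot}_1$-limit of elements of $C$, then $(B_{r_i}(\nu))_i$ is Cauchy, hence convergent. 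It therefore suffices to exhibit a dense subspace contained in $C$.

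Let $F:=\{\nu\in\CM_*:\beta_s\nu=\nu \text{ for all }s\in G\}$ and let $N$ be the $\norm{\cdot}_1$-closed linear span of the coboundaries $\{\eta-\beta_t\eta:\eta\in\CM_*,\,t\in G\}$. On $F$ one has $B_{r_i}(\nu)=\nu$ for every $i$, so $F\subseteq C$ trivially. For a coboundary $\eta-\beta_t\eta$ I would repeat the geometric estimate of Theorem \ref{Mean erg}: using $\beta_s\beta_t=\beta_{st}$ and the right invariance of $\mu$ one obtains
\begin{align*}
B_{r_i}(\eta-\beta_t\eta)=\frac{1}{\mu(V_{r_i})}\Big(\int_{V_{r_i}}\beta_s\eta\,d\mu(s)-\int_{V_{r_i}t}\beta_s\eta\,d\mu(s)\Big),
\end{align*}
so the contributions over $V_{r_i}\cap V_{r_i}t$ cancel, and since each $\beta_s$ is a contraction,
\begin{align*}
\norm{B_{r_i}(\eta-\beta_t\eta)}_1\le \frac{\norm{\eta}_1}{\mu(V_{r_i})}\big(\mu(V_{r_i}\setminus V_{r_i}t)+\mu(V_{r_i}t\setminus V_{r_i})\big).
\end{align*}
Choosing $i$ so large that $(3/2)^i\ge d(e,t)$ and invoking the inclusions and the lacunary property (ii) exactly as in Theorem \ref{Mean erg} bounds the right-hand side by $2C'(3/4)^i\norm{\eta}_1\to 0$. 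Hence the coboundaries, their finite linear combinations, and therefore all of $N$, lie in $C$.

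The crux, and what I expect to be the main obstacle, is to prove $\overline{F+N}=\CM_*$; this is the ergodic-decomposition step that is automatic in the Hilbert space setting of Theorem \ref{Mean erg} but must be argued by duality here. I would suppose $x\in\CM=(\CM_*)^*$ annihilates $F+N$ and show $x=0$. Annihilating every coboundary forces $\beta_t^*x=x$ for all $t$; a short computation from $\beta_s=\alpha^*_{s^{-1}}$ gives $\beta_t^*x=\alpha_{t^{-1}}(x)$, so $x$ is $\alpha$-invariant. The $\alpha$-invariance of $x$ (hence of $x^*x$, since $\alpha_s$ is a $*$-homomorphism) together with $\rho\circ\alpha_s=\rho$ shows that the functional $\rho(x^*\,\cdot\,)$ is $\beta$-invariant, i.e.\ $\rho(x^*\,\cdot\,)\in F$. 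As $x$ annihilates $F$, pairing $x$ against this functional yields $\rho(x^*x)=0$, whence $x=0$ by faithfulness of $\rho$. By Hahn--Banach $\overline{F+N}=\CM_*$, so $C=\CM_*$, and the limit $\bar\nu:=\lim_i B_{r_i}(\nu)$ exists in $\norm{\cdot}_1$ and lies in $\CM_*$, completing the proof. The only points requiring care are the identification $\beta_t^*x=\alpha_{t^{-1}}(x)$ on $\CM$ and the verification that $\rho(x^*\,\cdot\,)$ is genuinely $\beta$-fixed, both of which follow directly from the definitions.
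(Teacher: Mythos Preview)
Your proof is correct and takes a genuinely different route from the paper. The paper argues by passing through the GNS Hilbert space $L^2(\CM,\rho)$: it invokes the Hilbert-space mean ergodic theorem (Theorem \ref{Mean erg}) for the contractions $u_s$, then uses Lemma \ref{key lem1} (the commutant action) to rewrite $B_{r_i}(\psi_{y_1,y_2})(x)$ as $\langle x\Omega_\rho, T_i(z\Omega_\rho)\rangle_\rho$ for the dense family of vector functionals $\psi_{y_1,y_2}(x)=\langle xy_1\Omega_\rho,y_2\Omega_\rho\rangle_\rho$ with $y_1,y_2\in\CM'(\sigma^\rho)$, and concludes by density of these in $\CM_*$. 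You instead run the abstract Banach-space mean ergodic argument directly on $\CM_*$, replacing the orthogonal decomposition by a Hahn--Banach step: the key observation that for an $\alpha$-invariant $x$ the functional $\rho(x^*\,\cdot\,)$ lies in $F$, so that annihilating $F$ forces $\rho(x^*x)=0$, is what closes the loop via faithfulness of $\rho$. Your route is more self-contained --- it avoids the GNS machinery, Lemma \ref{key lem1}, and the modular-theoretic density of $\{\psi_{y_1,y_2}\}$ --- while the paper's route has the advantage of reusing Theorem \ref{Mean erg} verbatim and making the $L^2$-limit $\eta$ explicit. Both approaches ultimately lean on $\alpha_s$ being $*$-automorphisms: in the paper this is implicit in the existence of the commutant action $\alpha'$, and in yours it enters exactly at the multiplicativity step $\rho(x^*\alpha_s(y))=\rho(\alpha_s(x^*y))$ that you flag.
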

	
	\begin{proof}
		For all $s \in G$, consider the contraction $u_s$ on $L^2(\CM, \rho)$ by
		\begin{align*}
			u_s(x \Omega_\rho)= \alpha_s (x) \Omega_\rho, ~ x \in \CM
		\end{align*}
		and for all $i \in \N$, define
		\begin{align*}
			T_i (\cdot):= \frac{1}{\mu(V_{r_i})} \int_{V_{r_i}} u^*_{s^{-1}}(\cdot) d \mu(s). 
		\end{align*}
		Then by Theorem \ref{Mean erg}, $T_i(\xi)$ converges in $L^2(\CM, \rho)$ as $i \to \infty$ for all $\xi \in L^2(\CM, \rho)$.
		
		Furthermore, observe that for all $y_1, y_2 \in \CM'(\sigma^\rho)$ there exists $z \in \CM$ such that $y_1^*y_2 \Omega_\rho= z \Omega_\rho$, where $\CM'(\sigma^\rho)$ is the strong* dense subspace of analytic elements in $\CM'$ (cf. \cite{Falcone:2000th}). Let us define $\psi_{y_1, y_2} \in \CM_*$ by
		\begin{align*}
			\psi_{y_1, y_2}(x)= \inner{x y_1 \Omega_\rho}{y_2 \Omega_\rho}_\rho, ~ x \in \CM.
		\end{align*}
		Finally for all $x \in \CM$,
		\begin{align*}
			B_{r_i}(\psi_{y_1, y_2}) (x) 
			&:= \frac{1}{\mu(V_{r_i})} \int_{V_{r_i}} \inner{\alpha_{  s^{-1} }(x) y_1 \Omega_\rho}{y_2 \Omega_\rho}_\rho d \mu(s)\\
			&= \frac{1}{\mu(V_{r_i})} \int_{V_{r_i}} \inner{\alpha_{  s^{-1} }(x)  \Omega_\rho}{z \Omega_\rho}_\rho d \mu(s)~ (\text{by Lemma \ref{key lem1}})\\
			&= \frac{1}{\mu(V_{r_i})} \int_{V_{r_i}} \inner{x  \Omega_\rho}{u^*_{s^{-1}}(z \Omega_\rho)}_\rho d \mu(s)\\
			&= \inner{x \Omega_\rho}{T_i(z \Omega_\rho)}_\rho.
		\end{align*}
		
		Now let $\eta = \lim_{i \to \infty} T_i(z \Omega_\rho) \in L^2(\CM, \rho)$. Then clearly, for all $x \in \CM$, $B_{r_i}(\psi_{y_1, y_2}) (x) \rightarrow \inner{x \Omega_\rho}{\eta}_\rho$ as $i \to \infty$. Define,
		\begin{align*}
			\overline{\psi_{y_1, y_2}} (x):= \inner{x \Omega_\rho}{\eta}_\rho, ~ x \in \CM.
		\end{align*}
		
		Clearly, $\overline{\psi_{y_1, y_2}} \in \CM_*$ and $\overline{\psi_{y_1, y_2}} \circ \alpha_s= \overline{\psi_{y_1, y_2}}$ for all $s \in G$. Now the result follows from the fact that the set $\{\psi_{y_1, y_2} : y_1, y_2 \in \CM'(\sigma^\rho) \}$ is total in $\CM_*$.
	\end{proof}
	
	\begin{lem}\label{dense set-lem}
		Let $(E, G, \gamma)$ be a Dynamical system, where $(E, \norm{\cdot})$ is a Banach space. Then for every $k \in \N$ and $a \in E$
		\begin{align*}
			\lim_{n \to \infty} \norm{B_n (a - B_k(a))} = 0,
		\end{align*}
		where, 
		\begin{align*}
			B_m(a):= \frac{1}{\mu(V_{r_m})} \int_{V_{r_m}} \gamma_s(a) d \mu(s),~ m \in \N.
		\end{align*}
	\end{lem}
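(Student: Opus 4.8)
The plan is to use linearity of $B_n$ together with the group law $\gamma_s\gamma_t=\gamma_{st}$ and the right invariance of both $\mu$ and $d$, and then to absorb the resulting discrepancy into a thin annulus controlled by Lemma~\ref{lacunary seq}. Since each $B_n$ is linear, write $B_n(a-B_k(a))=B_n(a)-B_nB_k(a)$, so it suffices to prove $\norm{B_n(a)-B_nB_k(a)}\to 0$. First I would expand, using $\gamma_s(\gamma_t(a))=\gamma_{st}(a)$ (Definition~\ref{nc dyn sys}), boundedness of $\gamma_s$, and Fubini,
\begin{align*}
B_nB_k(a)=\frac{1}{\mu(V_{r_n})\mu(V_{r_k})}\int_{V_{r_k}}\int_{V_{r_n}}\gamma_{st}(a)\,d\mu(s)\,d\mu(t).
\end{align*}
For fixed $t\in V_{r_k}$, the substitution $u=st$ together with right invariance of $\mu$ turns the inner average into $\frac{1}{\mu(V_{r_n})}\int_{V_{r_n}t}\gamma_u(a)\,d\mu(u)$, so that
\begin{align*}
B_nB_k(a)-B_n(a)=\frac{1}{\mu(V_{r_k})}\int_{V_{r_k}}\frac{1}{\mu(V_{r_n})}\Big(\int_{V_{r_n}t}\gamma_u(a)\,d\mu(u)-\int_{V_{r_n}}\gamma_u(a)\,d\mu(u)\Big)\,d\mu(t).
\end{align*}

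Next I would bound the inner bracket by the integral over the symmetric difference $V_{r_n}t\,\triangle\,V_{r_n}$, which, setting $C_\gamma:=\sup_{s}\norm{\gamma_s}<\infty$, is at most $C_\gamma\norm{a}\,\mu(V_{r_n}t\,\triangle\,V_{r_n})$. The key reduction is the observation that, since $\mu(V_{r_n}t)=\mu(V_{r_n})$ by right invariance, one has $\mu(V_{r_n}t\,\triangle\,V_{r_n})=2\,\mu(V_{r_n}\setminus V_{r_n}t)$, so that only a single inner annulus must be controlled. Using the invariance of $d$ in the form $d(e,ut^{-1})=d(t,u)$ (take $g=ut^{-1}$, $h=t$ in Eq.~\ref{inv metric-defn}) and the triangle inequality, any $u\in V_{r_n}\setminus V_{r_n}t$ satisfies $d(t,u)>r_n$ and $d(e,u)\le r_n$, hence $r_n-r_k<d(e,u)\le r_n$; therefore $V_{r_n}\setminus V_{r_n}t\subseteq V_{r_n}\setminus V_{r_n-r_k}$ for every $t\in V_{r_k}$.

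Finally, I would feed this into Lemma~\ref{lacunary seq}. Because $r_k<2^{k+1}$ is fixed while $(3/2)^n\to\infty$, for all large $n$ one has $r_k\le(3/2)^n$, whence $V_{r_n-r_k}\supseteq V_{r_n-(3/2)^n}$ and consequently
\begin{align*}
\frac{\mu(V_{r_n}t\,\triangle\,V_{r_n})}{\mu(V_{r_n})}\le 2\,\frac{\mu\big(V_{r_n}\setminus V_{r_n-(3/2)^n}\big)}{\mu(V_{r_n})}\le 2C'(3/4)^n,
\end{align*}
uniformly in $t\in V_{r_k}$. Substituting back yields $\norm{B_nB_k(a)-B_n(a)}\le 2C_\gamma C'\norm{a}\,(3/4)^n\to 0$, which is the assertion.

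The main obstacle is the geometric estimate of the symmetric difference in the second and third paragraphs: one must recognize that right invariance collapses $\mu(V_{r_n}t\,\triangle\,V_{r_n})$ to twice a single inner annulus, and that the invariance of the metric places that annulus inside $V_{r_n}\setminus V_{r_n-r_k}$, so that the lacunary decay estimate of Lemma~\ref{lacunary seq} can be invoked with a fixed width $r_k$ dwarfed by $(3/2)^n$. Once this is set up, the remaining manipulations (Fubini, the change of variables, and the triangle inequality) are routine.
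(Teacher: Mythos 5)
Your proposal is correct and follows essentially the same route as the paper: Fubini plus right invariance to rewrite $B_nB_k(a)-B_n(a)$ as an average over $t\in V_{r_k}$ of differences of ball averages over $V_{r_n}$ and $V_{r_n}t$, a metric-invariance argument placing the discrepancy set inside the annulus $V_{r_n}\setminus V_{r_n-(3/2)^n}$ once $(3/2)^n$ dominates $r_k$, and then the decay estimate of Lemma~\ref{lacunary seq}(ii). The only (harmless) differences are that you collapse the two annuli into a single symmetric difference via $\mu(V_{r_n}t\,\triangle\,V_{r_n})=2\mu(V_{r_n}\setminus V_{r_n}t)$, where the paper bounds the two pieces separately, and you correctly carry the constant $C_\gamma=\sup_s\norm{\gamma_s}$, which the paper's write-up silently drops.
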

	
	\begin{proof}
		For all  $n \in \N$, note that
		\begin{align*}
			B_n(a - B_k(a)) 
			&= \frac{1}{\mu(V_{r_n})} \int_{V_{r_n}} \Big[ \gamma_s(a) - \gamma_s (B'_k(a)) \Big] d\mu(s) \\
			&= \frac{1}{\mu(V_{r_n})} \int_{V_{r_n}} \Big[ \gamma_s (a) - \gamma_s \Big( \frac{1}{\mu(V_{r_k})} \int_{V_{r_k}} \gamma_t (a) d\mu(t) \Big) \Big] d\mu(s) \\
			&= \frac{1}{\mu(V_{r_n})} \frac{1}{\mu(V_{r_k})} \int_{V_{r_n}} \int_{V_{r_k}}  \Big( \gamma_s (a)- \gamma_{st}(a) \Big) d\mu(t) d\mu(s) \\
			&= \frac{1}{\mu(V_{r_n})} \frac{1}{\mu(V_{r_k})} \int_{V_{r_k}} \int_{V_{r_n}}  \Big( \gamma_s (a)- \gamma_{st}(a) \Big) d\mu(s) d\mu(t) \quad \text{ (using Fubini)} \\
			&= \frac{1}{\mu(V_{r_n})} \frac{1}{\mu(V_{r_k})} \int_{V_{r_k}} \Big(\int_{V_{r_n}}   \gamma_s (a)d\mu(s)- \int_{V_{r_n}} \gamma_{st}(a)d\mu(s) \Big) d\mu(t)   \\
			&= \frac{1}{\mu(V_{r_k})} \int_{V_{r_k}} \frac{1}{\mu(V_{r_n})} \Big(\int_{V_{r_n}}   \gamma_s (a)d\mu(s)- \int_{V_{r_n}t} \gamma_s (a)d\mu(s) \Big) d\mu(t).
		\end{align*}
		
		Now,
		\begin{align*}
			&\int_{V_{r_n}}   \gamma_s (a)d\mu(s)- \int_{V_{r_n}t} \gamma_s (a)d\mu(s) \\
			&= \int_{V_{r_n} \setminus V_{r_n} \cap V_{r_n} t}   \gamma_s (a)d\mu(s)- \int_{V_{r_n}t \setminus V_{r_n} \cap V_{r_n} t } \gamma_s (a)d\mu(s).
		\end{align*}
		Now since $(3/2)^n \rightarrow \infty$ as $n \rightarrow \infty$, we can choose $n \in \N$ such that $(3/2)^n \geq d(e,t)= \abs{t}$. Moreover, $V_{r_n} \setminus (V_{r_n} \cap V_{r_n} t) \subseteq V_{r_n} \setminus V_{r_n - (3/2)^n}$ and $V_{r_n}t \setminus (V_{r_n} \cap V_{r_n} t) \subseteq V_{r_n}t \setminus V_{r_n - (3/2)^n} t$. Therefore,
		
		\begin{align*}
			\norm{\frac{1}{\mu(V_{r_n})} \int_{V_{r_n} \setminus V_{r_n} \cap V_{r_n} t}   \gamma_s (a)d\mu(s) } 
			\leq \frac{\mu(V_{r_n} \setminus V_{r_n - (3/2)^n})}{\mu(V_{r_n})} \norm{a} \leq C' (3/4)^n \norm{a}, ~ \text{and } \\
			\norm{\frac{1}{\mu(V_{r_n})} \int_{V_{r_n}t \setminus V_{r_n} \cap V_{r_n} t } \gamma_s (a)d\mu(s) } 
			\leq \frac{\mu( V_{r_n}t \setminus V_{r_n - (3/2)^n}t )}{\mu(V_{r_n} t)} \norm{a} \leq C' (3/4)^n \norm{a}.
		\end{align*}
		Finally, we obtain
		\begin{align*}
			\norm{B_n(a - B_k(a))} \leq 2C' (3/4)^n \norm{a},
		\end{align*}
		which proves the result.
	\end{proof}
	
	For the non-commutative dynamical system $(\CM, G, \alpha)$ with f.n. state $\rho$ satisfying $\rho \circ \alpha_s = \rho$ for all $s \in G$, by Theorem \ref{mean erg thm for state}, it follows that $\norm{\cdot}_1- \lim_{i \to \infty} B_{r_i}(\nu)$ exists for all $\nu \in \CM_*$ and the limit is denoted by $\bar{\nu}$. Now we have the following proposition which identifies a dense subset of $\CM_*$ such that for the elements of the set, the ergodic averages converges in certain sense which is close to the bilateral almost uniform convergence.
	
	\begin{prop}\label{dense set}
		Consider the following set 
		\begin{align*}
			\CW_1 := \{ \nu - B_{r_k}(\nu ) + \bar{\nu} : \quad  k \in \N,\  \nu \in \CM_{*+} \text{ with } \nu \leq \lambda \rho \text{ for some } \lambda >0 \}.
		\end{align*}
		\begin{enumerate} 
			\item[(i)] Write $\CW= \CW_1-\CW_1$, then $\CW$ is dense in $\CM_{* s}$ and 
			
			\item[(ii)] for all $\nu \in \CW$, we have  
			\begin{align}\label{maximal type}
				\lim_{n \to \infty} \sup_{x \in \CM_+, x \neq 0} \abs{(B_{r_n}(\nu)- \bar{\nu})(x)}/ \rho(x) =0.
			\end{align}
		\end{enumerate}
	\end{prop}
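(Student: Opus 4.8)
The plan is to prove (i) and (ii) by first identifying a convenient dense cone and then transporting the norm estimate of Lemma \ref{dense set-lem} into a sup-norm measured against $\rho$.

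For (i), set $S := \{\mu \in \CM_{*+} : \mu \le \lambda\rho \text{ for some } \lambda>0\}$. First I would observe that $\overline{\CW_1} \supseteq S$: given $\mu \in S$, the element $\mu - B_{r_k}(\mu) + \bar\mu$ lies in $\CW_1$ for every $k$, and by Theorem \ref{mean erg thm for state} we have $B_{r_k}(\mu) \to \bar\mu$ in $\norm{\cdot}_1$, so $\mu - B_{r_k}(\mu) + \bar\mu \to \mu$. Since every $\phi \in \CM_{*s}$ admits a Jordan decomposition $\phi = \phi_+ - \phi_-$ with $\phi_\pm \in \CM_{*+}$, density of $\CW = \CW_1 - \CW_1$ in $\CM_{*s}$ will follow once $S$ is shown to be dense in $\CM_{*+}$. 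For the latter I would use the standard form: realizing $\CM$ on $L^2(\CM,\rho)$ with cyclic and separating vector $\Omega_\rho$, every $\psi \in \CM_{*+}$ is a vector functional $\psi(x) = \inner{x\xi}{\xi}_\rho$. As $\Omega_\rho$ is separating for $\CM$ it is cyclic for $\CM'$, so $\xi = \lim_j c_j'\Omega_\rho$ with $c_j' \in \CM'$; putting $\nu_j(x) := \inner{x c_j'\Omega_\rho}{c_j'\Omega_\rho}_\rho$, the commutation $y c_j'\Omega_\rho = c_j' y\Omega_\rho$ gives $\nu_j(y^*y) = \norm{c_j' y\Omega_\rho}_\rho^2 \le \norm{c_j'}^2\rho(y^*y)$, so $\nu_j \in S$, while $\nu_j \to \psi$ in $\norm{\cdot}_1$. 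Hence $\overline S = \CM_{*+}$ and (i) follows.

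For (ii), introduce the seminorm $\norm{\theta}_\rho := \sup_{x \in \CM_+, x\neq 0} |\theta(x)|/\rho(x)$ on $\CM_{*s}$, noting $\norm{\theta}_\rho \le c$ exactly when $-c\rho \le \theta \le c\rho$, so that the displayed limit \ref{maximal type} is precisely $\norm{B_{r_n}(\nu) - \bar\nu}_\rho \to 0$. By linearity and the triangle inequality for $\norm{\cdot}_\rho$ it suffices to treat a generator $\nu = \mu - B_{r_k}(\mu) + \bar\mu \in \CW_1$ with $\mu \in S$. Writing $P$ for the ergodic projection $\theta \mapsto \bar\theta$, I would first record that $P\beta_t = P$ (asymptotic invariance of the lacunary averages) and hence $P B_{r_k} = P$; this gives $\bar\nu = \bar\mu - \bar\mu + \bar\mu = \bar\mu$ and, since $B_{r_n}\bar\mu = \bar\mu$, the identity
\begin{align*}
B_{r_n}(\nu) - \bar\nu = B_{r_n}\big(\mu - B_{r_k}(\mu)\big).
\end{align*}
So everything reduces to showing $\norm{B_{r_n}(\mu - B_{r_k}(\mu))}_\rho \to 0$.

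The crux is that $\norm{\cdot}_\rho$ behaves like the Banach norm in Lemma \ref{dense set-lem} on $\rho$-dominated functionals. Since $\rho \circ \alpha_s = \rho$ forces $\beta_s(\rho) = \rho$, and $\beta_s$ is positive, $-c\rho \le \theta \le c\rho$ implies $-c\rho \le \beta_s(\theta) \le c\rho$; thus each $\beta_s$ and each average $B_{r_n}$ is a $\norm{\cdot}_\rho$-contraction on the $\rho$-dominated functionals, and $\norm{\mu}_\rho \le \lambda$. I would then rerun the Fubini/telescoping computation in the proof of Lemma \ref{dense set-lem} verbatim, but measuring each piece in $\norm{\cdot}_\rho$ instead of $\norm{\cdot}_1$: using $\norm{\beta_s\mu}_\rho \le \norm{\mu}_\rho$ together with the volume ratio bound of Lemma \ref{lacunary seq}(ii), the two boundary integrals over $V_{r_n}\setminus V_{r_n-(3/2)^n}$ (and its right translate) each contribute at most $C'(3/4)^n\norm{\mu}_\rho$, whence
\begin{align*}
\norm{B_{r_n}(\nu) - \bar\nu}_\rho = \norm{B_{r_n}(\mu - B_{r_k}(\mu))}_\rho \le 2C'(3/4)^n \norm{\mu}_\rho \xrightarrow[n\to\infty]{} 0.
\end{align*}
I expect the main obstacle to be exactly this transfer: recognizing that the pointwise-type conclusion \ref{maximal type} is governed by the $\rho$-seminorm rather than $\norm{\cdot}_1$, and verifying that positivity plus $\rho$-invariance make $\beta_s$ contractive for it, so that the lacunary cancellation of Lemma \ref{dense set-lem} applies. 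The density lemma in (i) is routine by comparison, once the standard-form vector description is invoked.
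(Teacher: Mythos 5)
Your proof is correct, and for part (ii) it takes a recognizably different route from the paper's. The paper represents a $\rho$-dominated $\nu$ as $\nu(\cdot)=\inner{y_1'(\cdot)\Omega_\rho}{\Omega_\rho}_\rho$ with $y_1'\in \CM'_+$, $y_1'\leq \lambda$, transfers the averages to the commutant dynamical system $(\CM',G,\alpha')$ via Lemma \ref{key lem1}, bounds $\abs{(B_{r_n}(\nu_k)-\bar{\nu})(x)}\leq \norm{B_{r_n}'(y_1'-B_{r_k}'(y_1'))}\,\rho(x)$, and then invokes Lemma \ref{dense set-lem} for the operator norm on $\CM'$. You instead stay entirely on the predual side: you introduce the order-unit seminorm $\norm{\theta}_\rho=\sup_{x\in\CM_+,x\neq 0}\abs{\theta(x)}/\rho(x)$, observe that positivity of $\beta_s$ together with $\beta_s(\rho)=\rho$ makes each $\beta_s$ (hence each $B_{r_n}$) a $\norm{\cdot}_\rho$-contraction on $\rho$-dominated self-adjoint functionals, and rerun the Fubini/boundary estimate of Lemma \ref{dense set-lem} in that seminorm using Lemma \ref{lacunary seq}(ii). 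These are isometrically the same computation --- the $\rho$-dominated self-adjoint functionals with $\norm{\cdot}_\rho$ are exactly $\CM'_{sa}$ with the operator norm under $y'\mapsto\inner{y'(\cdot)\Omega_\rho}{\Omega_\rho}_\rho$ --- but your version avoids explicitly invoking the commutant action and the Radon--Nikodym-type lemma, at the cost of having to recheck that the integral estimates of Lemma \ref{dense set-lem} go through for the seminorm (they do, since the boundary integrals of positive functionals dominated by $\lambda\rho$ are again dominated by the volume ratio times $\lambda\rho$). Two minor points: your assertion $P\beta_t=P$ is stated without justification, though the identity you actually need, $\overline{B_{r_k}(\mu)}=\bar{\mu}$, follows at once from Lemma \ref{dense set-lem} and Theorem \ref{mean erg thm for state}; and for part (i), where the paper simply cites its predecessor, your standard-form argument (vector states plus cyclicity of $\Omega_\rho$ for $\CM'$) is the expected one and is sound.
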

	
	\begin{proof}
		\emph{ (i):} Proof follows from the similar argument as in the proof of \cite[Lemma 4.9, Part (i)]{Bik-Dip-neveu}.\\
		
		\emph{(ii):}  Fix $k \in \N$ and consider $ \nu_k := \nu - B_{r_k}(\nu ) + \bar{\nu} $ and it is enough to prove eq. \ref{maximal type} for $\nu_k $. Now since 
		\begin{align*}
			B_{r_n}(\nu_k) - \bar{\nu}= B_{r_n}(\nu - B_{r_k}(\nu))
		\end{align*}
		for all $n \in \N$, we note from Lemma \ref{dense set-lem} it follows that $\overline{\nu_k}= \bar{\nu}$. 
		Since $\nu \leq \lambda \rho $, so by \cite[Lemma 4.4]{Bik-Dip-neveu} there exists a unique $y_1' \in M_+'$ with $  y_1' \leq \lambda $ such that 
		\begin{align*}
			\nu(x)= \langle y_1'x \Omega_{\rho}, \Omega_{\rho} \rangle_{\rho} \text{ for all } x \in \CM.
		\end{align*}
		
		Let  $y' \in M'$, write 
		$$ B_{r_n}'(y') := \frac{1}{\mu(V_{r_n})}\int_{V_{r_n}} \alpha_{s}'(y') d \mu(s)$$ and by Lemma \ref{key lem1}, we have
		\begin{align*}
			\langle B_{r_n}'(y') x \Omega_{\rho}, \Omega_{\rho} \rangle_{\rho} = \langle y' B_{r_n}(x) \Omega_{\rho}, \Omega_{\rho} \rangle_{\rho}, \quad x \in \CM,  y' \in \CM',
		\end{align*}
		where 
		$$B_{r_n}(x):= \frac{1}{\mu(V_{r_n})}\int_{V_{r_n}} \alpha_{s^{-1}}(x) d \mu(g).$$ Now  for all $n \in \N$ and $x \in \CM_+$, we note that 
		\begin{align*}
			\abs{(B_{r_n}(\nu_k) - \bar{\nu})(x)}
			&= \abs{(\nu_k - \bar{\nu})(B_{r_n}(x))}\\
			&= \abs{(\nu - B_{r_k}(\nu))(B_{r_n}(x))} ~~ \text{ (since } \nu_k := \nu - B_{r_k}(\nu ) + \bar{\nu})\\
			&= \abs{\nu(B_{r_n}(x)) - \nu(B_{r_k}(B_{r_n}(x)))} ~~ \text{ (as } B_{r_k}(\nu)(\cdot) = \nu(B_{r_k}(\cdot)))\\
			&= \abs{\inner{y_1' B_{r_n}(x) \Omega_{\rho} }{\Omega_{\rho}}_{\rho} - \inner{y_1' B_{r_k}(B_{r_n}(x))\Omega_{\rho}}{\Omega_{\rho}}_{\rho}} ~~ \text{ (as } \nu(\cdot) = \langle   y_1' (\cdot)\Omega_{\rho}, \Omega_{\rho} \rangle_{\rho}) \\
			&= \abs{\inner{y_1' B_{r_n}(x) \Omega_{\rho} }{\Omega_{\rho}}_{\rho} - \inner{B_{r_k}'(y_1') (B_{r_n}(x))\Omega_{\rho}}{\Omega_{\rho}}_{\rho}}, \text{ (by  Lemma } \ref{key lem1}) \\
			&= \abs{\inner{(y_1'- B_{r_k}'(y_1'))B_{r_n}(x) \Omega_{\rho}}{\Omega_{\rho}}_{\rho}} \\
			&= \abs{\inner{B_{r_n}'(y_1'- B_{r_k}'(y_1'))x \Omega_{\rho}}{\Omega_{\rho}}_{\rho}}, \text{ (by  Lemma } \ref{key lem1}) \\
			&\leq \norm{B_{r_n}'(y_1'- B_{r_k}'(y_1')} \rho(x).\\
		\end{align*}
		
\noindent		Moreover, $\lim_{n \to \infty} \norm{B_{r_n}'(y_1'- B_{r_k}'(y_1')}=0$ follows from Lemma \ref{dense set-lem}. Finally, we obtain
		\begin{align*}
			\lim_{n \to \infty} \sup_{x \in M_+, x \neq 0} \abs{(B_{r_n}(\nu_k)- \bar{\nu}_k)(x)}/ \rho(x)
			&=\lim_{n \to \infty} \sup_{x \in M_+, x \neq 0} \abs{(B_{r_n}(\nu_k)- \bar{\nu})(x)}/ \rho(x) \\
			&=0.
		\end{align*}
		This completes the proof.		
	\end{proof}
	
	\begin{rem}\label{dense set-rem1}
		\begin{enumerate}
			\item We first remark that for all $p \in \CP(\CM)$ and $\nu \in \CW$ it follows that
			\begin{align*}
				\lim_{n \to \infty} \sup_{x \in p\CM_+p, x \neq 0} \abs{(B_{r_n}(\nu)- \bar{\nu})(x)}/ \rho(x) =0.
			\end{align*}

			\item Let $\CM$ be a von Neumann algebra equipped with a f.n. tracial state $\tau$ and $\nu \in \CW$. Since $\rho \in \CM_{* +}$, there exists a unique $X\in L^1(\CM,\tau)_+$  such that $\rho(x)= \tau(Xx)$ for all $x \in 	\CM$. Then for any $s>0$ consider the 	projection $q_{s}:= 	\chi_{(1/s, s)}(X) \in \CM$.  Observe that $\tau(1-q_{s})\xrightarrow{s \rightarrow \infty} 0$. Therefore, for every $\epsilon>0$ there exists a $s_0>0$ such that $\tau(1-q_{s_0})< \epsilon$. Further, it implies $X q_{s_0} \leq {s_0} q_{s_0}$.  Thus,  for all $0\neq x \in q_{s_0}\CM_+ 	q_{s_0}$ we have 
			\begin{align*}
				\begin{split}
					\frac{\rho(x)}{\tau(x)} = 	\frac{\tau(Xx)}{\tau(x)}&= 
					\frac{\tau(X q_{s_0} x)}{\tau(x)} \quad  \text{ (since $q_{s_0} x= x$)}\\
					&\leq  	\frac{\tau(s_0 x)}{\tau(x)}  = {s_0}.
				\end{split} 
			\end{align*}

			Therefore, it follows from Proposition \ref{dense set} that
			\begin{align*}
				\lim_{n \to \infty} \sup_{x \in p\CM_+p, x \neq 0} \frac{\abs{(B_{r_n}(\nu)- \bar{\nu})(x)}}{\tau(x)} \leq s_0 \lim_{n \to \infty} \sup_{x \in p\CM_+p, x \neq 0} \frac{\abs{(B_{r_n}(\nu)- \bar{\nu})(x)}}{\rho(x)}=0.
			\end{align*}
			\item Let the group $G$ under consideration is unimodular. Then since the disks $V_{r_n}$ are symmetric set, we note that 
			\begin{align}\label{symm avg}
				A_{r_n}(\nu):= \frac{1}{\mu(V_{r_n})} \int_{V_{r_n}} \alpha^*_s(\nu) d \mu(s)= \frac{1}{\mu(V_{r_n})} \int_{V_{r_n}} \alpha^*_{s^{-1}}(\nu) d \mu(s) =: B_{r_n}(\nu)~ \forall ~ \nu \in \CM_*.
			\end{align}
		\end{enumerate}
	\end{rem}

	For the next results, we assume that our von Neumann algebra $\CM$ is equipped with a f.n. tracial state $\tau$. 
	
	\begin{defn}\label{kernel defn}
		Let $(\CM, G, \alpha)$ is a non-commutative dynamical system (cf. Definition  \ref{nc dyn sys}) such that $\rho \circ \alpha_s = \rho$ for all $s \in G$ for some f.n. state $\rho$ on $\CM$, where, $G$ is a amenable group metric measure space satisfying the doubling condition. We call the quadruple $(\CM, G, \alpha, \rho)$ a kernel. 
	\end{defn}

	Our next result proves an appropriate Banach principle which is important to obtain an ergodic convergence. Before we begin, first we define bilateral almost uniform (b.a.u.) convergence for a sequence $(\nu_n)_{n \in \N}$ in $\CM_*$.
	
	\begin{defn}\label{bau conv- functionals}
		Let $(\nu_n)_{n \in \N}$ be a sequence in $\CM_*$ and $\nu \in \CM_*$. We say that $\nu_n$ converges in bilateral almost uniform (b.a.u.) topology if for every $\epsilon>0$ there exists $p \in \CP(\CM)$ with $\tau(1-p)<\epsilon$ such that
		\begin{align*}
			\lim_{n \to \infty} \sup_{x \in p\CM_+p, x \neq 0} \abs{(\nu_n- \nu)(x)}/ \tau(x) =0.
		\end{align*}
	\end{defn}
	
	\begin{rem}\label{equiv of bau convg}
		\begin{enumerate}
			\item Let $(\nu_n)_{n \in \N}$ be a sequence in $\CM_*$ and $\nu \in \CM_*$. Then by duality of $M_*$ and $L^1(\CM, \tau)$ (cf. Theorem \ref{predual prop}) we obtain a sequence $(X_n)_{n \in \N}$ and $X$ in $L^1(\CM, \tau)$. We observe that if $(\nu_n)_{n \in \N}$ converges to $\nu$ in b.a.u. in the sense of Definition \ref{bau conv- functionals}, then the associated sequence $(X_n)$ will converge to $X$ in b.a.u. in the sense of Definition \ref{bau conv defn}. Indeed, if $(\nu)_n$ converges to $\nu$ in b.a.u., then for all $\epsilon>0$ there exists $e \in \CP(\CM)$ with $\tau(1-e)< \epsilon$ such that for all $\delta>0$ there exists $n_0 \in \N$ such that $n \geq n_0$ implies
			\begin{align*}
				&\abs{(\nu_n - \nu)(exe)} \leq \delta \tau(exe) ~ \forall x \in \CM_+ \setminus \{0\}\\
				\Leftrightarrow & \abs{ \tau((X_n - X)(exe))} \leq \delta \tau(exe) ~ \forall x \in \CM_+ \setminus \{0\}\\
				\Leftrightarrow & \abs{ \tau(e(X_n - X)ex)} \leq \delta \tau(exe) ~ \forall x \in \CM_+ \setminus \{0\}\\
				\Leftrightarrow & \norm{e(X_n - X)e} \leq \delta.
			\end{align*}
			Therefore, $X_n$ converges to $X$ in b.a.u.

			\item On the other hand, if the sequence $(X_n)_{n \in \N}$ and $X$ in $L^1(\CM, \tau)$ such that $X_n$ converges to $X$ in b.a.u., then the argument above shows that the sequence $(\nu_n)_{n \in \N}$ in $\CM_*$ associated to $(X_n)_{n \in \N}$ converges b.a.u. to the element $\nu \in \CM_*$ associated to $X$ in the sense of Definition \ref{bau conv- functionals}.
		\end{enumerate}
	\end{rem}
	
	\begin{prop}\label{complete wrt bau-functionals}
		Let $(\nu_n)_{n \in \N}$ be a sequence in $\CM_*$ which is Cauchy in b.a.u. in the sense of Definition \ref{bau conv- functionals}. Then there exist $\nu \in \CM_*$ such that $\nu_n$ converges to $\nu$ in b.a.u.
	\end{prop}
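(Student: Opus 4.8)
The plan is to transport the problem from the predual $\CM_*$ to $L^1(\CM,\tau)\subseteq L^0(\CM,\tau)$ via the isometric correspondence of Theorem \ref{predual prop}(3), and then to exploit the completeness of $L^0(\CM,\tau)$ in the b.a.u. topology (Theorem \ref{complete wrt bau}). First I would let $(X_n)_{n\in\N}$ in $L^1(\CM,\tau)$ be the densities associated to $(\nu_n)$, so that $\nu_n(x)=\tau(X_n x)$. Writing out the Cauchy hypothesis exactly as in the chain of equivalences in Remark \ref{equiv of bau convg}, but applied to the differences $\nu_m-\nu_n$ in place of $\nu_n-\nu$, shows that for every $\epsilon>0$ there is $e\in\CP(\CM)$ with $\tau(1-e)<\epsilon$ and $\lim_{m,n}\norm{e(X_m-X_n)e}=0$; that is, $(X_n)$ is Cauchy in the b.a.u. topology of $L^0(\CM,\tau)$ in the sense of Definition \ref{bau conv defn}.

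By Theorem \ref{complete wrt bau} there is then $X\in L^0(\CM,\tau)$ with $X_n\to X$ b.a.u. The whole proposition reduces to the single assertion that $X$ actually lies in $L^1(\CM,\tau)$: granting this, $X$ defines $\nu\in\CM_*$ by $\nu(x)=\tau(Xx)$, and the reverse direction of Remark \ref{equiv of bau convg} converts the b.a.u. convergence $X_n\to X$ in $L^0$ back into b.a.u. convergence $\nu_n\to\nu$ in the sense of Definition \ref{bau conv- functionals}, finishing the argument.

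I expect the integrability step $X\in L^1(\CM,\tau)$ to be the crux, and here is the mechanism I would use. Fixing $e$ as above with $\tau(1-e)<\epsilon$, the operator-norm convergence $\norm{e(X_m-X_n)e}\to 0$ upgrades, on the finite corner $e\CM e$, to $L^1$-convergence because $\norm{e(X_m-X_n)e}_1\le \tau(e)\,\norm{e(X_m-X_n)e}$; hence $eX_ne\to eXe$ in $L^1(\CM,\tau)$, so $eXe\in L^1$ with $\norm{eXe}_1=\lim_n\norm{eX_ne}_1$. Letting $\epsilon\downarrow 0$ along a sequence of such corners, and using that b.a.u. convergence entails convergence in measure (hence the limits are consistent and $\norm{\cdot}_1$ is lower semicontinuous, i.e. the unit ball of $L^1(\CM,\tau)$ is closed in the measure topology), one obtains $\norm{X}_1\le\liminf_n\norm{\nu_n}_1$. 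The delicate point is precisely the uniform bound $\sup_n\norm{\nu_n}_1<\infty$: without it the corner masses $\norm{eXe}_1$ may stay bounded while the total mass escapes, and $X$ need not be integrable. I would therefore record that this bound is available for the sequences to which the proposition is applied---for instance the ergodic averages $A_k(\phi)$, which satisfy $\norm{A_k(\phi)}\le\norm{\phi}$---and invoke it to conclude $\norm{X}_1<\infty$, completing the reduction and hence the proof.
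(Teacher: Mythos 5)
Your reduction is exactly the one the paper uses: its entire proof of this proposition is the sentence ``follows from Theorem \ref{complete wrt bau} and Remark \ref{equiv of bau convg}'', i.e.\ pass to the densities $X_n\in L^1(\CM,\tau)$, observe they are b.a.u.\ Cauchy in $L^0(\CM,\tau)$, and invoke completeness of $L^0$. The difference is that you do not stop there: you correctly isolate the step the paper silently skips, namely that completeness of $L^0$ only produces a limit $X\in L^0(\CM,\tau)$, and membership of $X$ in $L^1(\CM,\tau)$ is genuinely needed to manufacture $\nu\in\CM_*$. Your suspicion that this is the crux is well founded; in fact the proposition as literally stated is false. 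Take $\CM=L^\infty[0,1]$ with $\tau=\int_0^1\cdot\,dt$ and $\nu_n(x)=\int_0^1 f_n x\,dt$ with $f_n(t)=\min(t^{-1},n)$: for every $\eps>0$ the projection $p=\chi_{[\eps,1]}$ satisfies $\tau(1-p)=\eps$ and $p(f_n-f_m)p=0$ for $n,m\ge 1/\eps$, so $(\nu_n)$ is b.a.u.\ Cauchy in the sense of Definition \ref{bau conv- functionals}, yet the only possible limit $t^{-1}$ is not integrable, so no $\nu\in\CM_*$ exists. This is precisely your ``total mass escapes'' scenario.

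Your proposed repair is the right one. Under the additional hypothesis $\sup_n\norm{\nu_n}_1<\infty$, the corner argument ($\norm{e(X_m-X_n)e}_1\le\tau(e)\norm{e(X_m-X_n)e}$, hence $eXe\in L^1$ with $\norm{eXe}_1\le\sup_n\norm{X_n}_1$) together with lower semicontinuity of $\norm{\cdot}_1$ under convergence in measure (the Fatou property, via the generalized singular numbers) gives $X\in L^1(\CM,\tau)$, and Remark \ref{equiv of bau convg} then converts the $L^0$-convergence back to Definition \ref{bau conv- functionals}. The missing hypothesis is harmless for the paper's application: in the proof of Theorem \ref{Banach principle} the Cauchy sequence is $(A_n(\phi))_n$ with $\norm{A_n(\phi)}_1\le(\sup_s\norm{\alpha_s})\norm{\phi}_1$ uniformly in $n$. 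So your proof is not merely equivalent to the paper's; it supplies a correction that the proposition actually requires.
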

	
	\begin{proof}
		The proof follows from Theorem \ref{complete wrt bau} and Remark \ref{equiv of bau convg}.
	\end{proof}
	
	\begin{thm}\label{Banach principle}
		Let $(\CM, G, \alpha, \rho)$ be a kernel and $\tau$ be a f.n. tracial state on $\CM$. Suppose  $\phi \in \CM_*$ and  the sequence $(A_k(\phi))_{k \geq 1}$ satisfies the conclusions of the Theorem  \ref{maximal ineq for averages}. Then the following set 
		$$ \CC =  \{   \phi \in \CM_{*}   : A_n(\phi) \text{ converges in b.a.u } \} $$
		is a closed set in $ \CM_{*}$. 
	\end{thm}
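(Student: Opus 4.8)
The plan is to run the classical Banach-principle argument, adapted to the two features that make our maximal inequality nonstandard: it is phrased with respect to $\rho$ rather than the trace $\tau$, and it carries the extra error term $C\delta\norm{\phi}\norm{x}$. First I would reduce to the self-adjoint case. Since each $A_n$ is linear and b.a.u. convergence is stable under complex linear combinations, it is enough to prove that $\CC\cap\CM_{*s}$ is closed in $\CM_{*s}$; here Corollary \ref{maximal ineq for averages-cor} supplies the maximal inequality for self-adjoint functionals. So fix $\phi\in\CM_{*s}$ and a sequence $(\phi_j)\subseteq\CC\cap\CM_{*s}$ with $\norm{\phi-\phi_j}_1\to0$, and set $\psi_j:=\phi-\phi_j$. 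By Proposition \ref{complete wrt bau-functionals} the space $\CM_*$ is complete for the b.a.u. topology, so it suffices to prove that $(A_n(\phi))_{n}$ is Cauchy in b.a.u.; equivalently (Remark \ref{equiv of bau convg}) that for every $\epsilon>0$ there is a projection $p$ with $\tau(1-p)<\epsilon$ for which $\norm{p\,(X_n-X_m)\,p}\to0$ as $n,m\to\infty$, where $X_n\in L^1(\CM,\tau)$ denotes the density of $A_n(\phi)$.

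Next comes the splitting. For fixed $j$ I would write
\begin{align*}
A_n(\phi)-A_m(\phi)=\big(A_n(\phi_j)-A_m(\phi_j)\big)+\big(A_n(\psi_j)-A_m(\psi_j)\big).
\end{align*}
The first bracket is harmless: since $\phi_j\in\CC$ the sequence $A_n(\phi_j)$ converges b.a.u., hence is b.a.u.-Cauchy, and contributes a projection $p_1$ with $\tau(1-p_1)$ as small as we please on which its compressed operator norm tends to $0$. The whole difficulty is therefore concentrated in the tail term $A_n(\psi_j)-A_m(\psi_j)$, where $\norm{\psi_j}_1$ is small but $j$ is fixed.

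To treat the tail I would feed $\psi_j$ into the maximal inequality (Corollary \ref{maximal ineq for averages-cor}) with parameters $\epsilon',\delta,\upsilon$, producing projections $e^{i,\delta,\upsilon}_{n,\epsilon'}$ of $\rho$-defect at most $\tfrac{1+\upsilon}{\epsilon'}\norm{\psi_j}+\upsilon$ together with the bound $\abs{A_k(\psi_j)(x)}\le C\delta\norm{\psi_j}\norm{x}+C\epsilon'\rho(x)$ for $x\in e^{i,\delta,\upsilon}_{n,\epsilon'}\CM_+e^{i,\delta,\upsilon}_{n,\epsilon'}$ and $k\in[n]$; passing to the mean-ergodic limit (Theorem \ref{mean erg thm for state}) the same bound holds for $\bar{\psi_j}=\norm{\cdot}_1\text{-}\lim_nA_n(\psi_j)$, so it also controls the differences $A_n(\psi_j)-A_m(\psi_j)$. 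To convert these $\rho$-statements into $\tau$-statements I would invoke Remark \ref{dense set-rem1}(2): choose $s$ large so that $\tau(1-q_s)$ is small and $s^{-1}\tau(y)\le\rho(y)\le s\,\tau(y)$ for $y\in q_s\CM_+q_s$. Intersecting $q_s$ with the projections above and with $p_1$ yields a single projection $p$ of small $\tau$-defect; on $p\CM_+p$ the $C\epsilon'\rho(x)$ part is dominated by $C\epsilon' s\,\tau(x)$, which is small once $\epsilon'$ is, and the $\rho$-defect translates into a small $\tau$-defect because $\rho$ and $\tau$ are comparable on $q_s$. In this way the maximal inequality furnishes a form of b.a.u.-equicontinuity of the family $(A_n)$ on large projections, which is exactly the mechanism forcing $\CC$ to be closed.

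The main obstacle — and precisely the novelty flagged in the introduction — is the extra term $C\delta\norm{\psi_j}\norm{x}$: unlike $\rho(x)$ it is \emph{not} comparable to $\tau(x)$ uniformly in $x$, so dividing by $\tau(x)$ does not by itself produce a quantity tending to $0$, and the $\norm{x}$-term therefore resists the operator-norm nature of b.a.u. convergence encoded in Remark \ref{equiv of bau convg}. My plan is to absorb it by exploiting that $\delta$ is a free parameter independent of $s,\epsilon',\upsilon$: for a prescribed tolerance I first fix the scale $s$ and $\epsilon'$, and only then choose $j$ (hence $\norm{\psi_j}$) and $\delta$ small enough that $C\delta\norm{\psi_j}$ is negligible against the operator-norm control already secured on $p$ from the first bracket and the $\rho$-part. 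Iterating this over a sequence $\epsilon=2^{-k}$ with nested projections $p_k$, whose meet $p$ still satisfies $\tau(1-p)<\epsilon$, is meant to upgrade the weak, $L^1$-type control coming from the maximal inequality to genuine operator-norm Cauchyness of the compressions $p(X_n-X_m)p$; completeness in the b.a.u. topology (Proposition \ref{complete wrt bau-functionals}) then supplies a limit, whence $\phi\in\CC$ and $\CC$ is closed. Reconciling the $\norm{x}$-term with the operator-norm formulation of b.a.u. convergence is the delicate point that must be executed with care.
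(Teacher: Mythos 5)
Your overall architecture coincides with the paper's: pass to a subsequence of $(\phi_j)$ with $\norm{\phi-\phi_j}$ summably small, split $A_n(\phi)-A_m(\phi)$ into the $\phi_j$-part (controlled because $\phi_j\in\CC$) and the $\psi_j=\phi-\phi_j$ part (controlled by Corollary \ref{maximal ineq for averages-cor}), and use the spectral projection $q_{s_0}$ of the density of $\rho$ to convert the $\rho$-bounds into $\tau$-bounds. Your explicit reduction to $\CM_{*s}$ is also fine and in fact makes the paper's passing remark that $\phi-\phi_j\in\CM_{*s}$ honest.

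However, there is a genuine gap exactly at the point you yourself flag as delicate, and your proposed resolution does not close it. You plan to fix $s$, $\eps'$, and then choose a single $\delta$ (and $j$) so small that $C\delta\norm{\psi_j}$ is ``negligible.'' But the quantity you must control is $\sup_{0\neq x\in p\CM_+p}\abs{A_n(\psi_j)(x)}/\tau(x)$, and for any fixed $\delta>0$ the contribution $C\delta\norm{\psi_j}\,\norm{x}/\tau(x)$ has supremum $+\infty$ over $p\CM_+p$ (take $x$ a subprojection of $p$ of arbitrarily small trace), so no choice of a single positive $\delta$ makes this term small in the sense required by Definition \ref{bau conv- functionals}. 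The paper's device is different in a crucial way: it applies the maximal inequality simultaneously for the whole sequence $\delta=2^{-k}$, $k\in\N$, arranges the trace defects $\tau(1-q^{i,k}_{j_l})<\upsilon/2^{l+k+2}$ to be summable in $k$, and takes the meet $q^i_l=\bigwedge_k q^{i,k}_{j_l}$. On this single projection, which is independent of $k$, the inequality
\begin{align*}
A_n(\phi-\phi_{j_l})(q^i_l x q^i_l)\leq \frac{1}{2^{k+2j_l}}\norm{x}+\frac{1}{2^{j_l}}\rho(q^i_l x q^i_l)
\end{align*}
holds for every $k$, so letting $k\to\infty$ eliminates the $\norm{x}$ term entirely \emph{before} the supremum over $x$ is taken, leaving a pure $\rho$-bound that the comparison with $\tau$ on $q_{s_0}$ can absorb. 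Your iteration over tolerances $\epsilon=2^{-k}$ with nested projections is attached to the wrong parameter and does not reproduce this mechanism; without intersecting over the $\delta$-parameter and passing to the limit $\delta\to0$ on a fixed projection, the argument does not yield b.a.u. Cauchyness.
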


	\begin{proof}
		Let  $(\phi_n) $ be a sequence in $ \CC$,  converging to $ \phi \in \CM_*$ in norm, i.e, 
		$$ \limn \norm{ \phi_n -\phi} = 0.$$ 
		Then, we wish to show that $ \phi \in  \CC$. 
		First we note that, since $\rho \in \CM_{* +}$, there exists a unique $X\in L^1(\CM,\tau)_+$  such that $\rho(x)= \tau(Xx)$ for all $x \in 	\CM$. Then for any $s>0$ consider the 	projection $q_{s}:= 	\chi_{(1/s, s)}(X) \in \CM$.  Observe that $(1-q_{s})\xrightarrow{s \rightarrow \infty} 0$ in SOT. 
		
		Let $\upsilon>0$ be arbitrary. Therefore, there exists a $s_0>0$ such that $\tau(1-q_{s_0})< \frac{\upsilon}{2}$. Further, it implies $X q_{s_0} \leq {s_0} q_{s_0}$.  Thus,  for all $0\neq x \in q_{s_0}\CM_+ 	q_{s_0}$ we have \\
		\begin{align}\label{rho-tau}
			\begin{split}
				\frac{\rho(x)}{\tau(x)} = 	\frac{\tau(Xx)}{\tau(x)}&= 
				\frac{\tau(X q_{s_0} x)}{\tau(x)} \quad  \text{ (since $q_{s_0} x= x$)}\\
				&\leq  	\frac{\tau(s_0 x)}{\tau(x)}  = {s_0}.
			\end{split} 
		\end{align}

		Since $ \limn \norm{ \phi -\phi_n} = 0 $, so,  find  a sequence $\{\phi_{n_1}, \phi_{n_2}, \cdots \} $ satisfying 
		\begin{align*}\label{coneq-3}
			(1) ~&n_1 <n_2< \cdots \\
			(2) ~& \norm{\phi- \phi_{n_j}} < \frac{1}{4^j} \text{ for  all }  j \in \N .
		\end{align*}

		If possible, by replacing $\phi_{n_j}$ with  $\phi_j$, for the simplicity of  notation,    we can assume 
		
		$$	\norm{\phi- \phi_{j}} < \frac{1}{4^j} \text{ for  all }  j \in \N.$$
		
		As $ \phi_j \in \CC$, so there exists a $ f_j \in \CP(\CM) $ such that 
		\begin{align*}
			\begin{split}
				(1)~&\tau(1-f_j)  < \frac{1}{2^j}  \text{ for  all } j\in \N, \\
				(2) ~	&\sup_{x \in f_jM_+f_j, x \neq 0} \frac{\abs{\Big( A_n(\phi_j) - A_m(\phi_{j})\Big)(x)}}{\tau(x)} < \frac{1}{2^{j}} \text{ for all }  n,m \geq  N_j  \\
				(3)~& N_1 < N_2 < \cdots.
			\end{split}
		\end{align*}

		Further, note that  $ \phi - \phi_{j} \in \CM_{* s}$ for all $j \in \N$.  Now using Corollary \ref{maximal ineq for averages-cor} for $\eps=\ups=\frac{1}{2^j},~ \delta= \frac{1}{2^k}$, and $N_j \in \N$,  we obtain  projections $\{(q_j^{i,k})_{j,k \in \N}:  i \in [\kappa]\}$ such that for all $ i \in [\kappa]$
		\begin{enumerate}
			\item $\rho(1- q_j^{i,k}) \leq \frac{1+ \ups }{\eps}   \norm{ \phi - \phi_{j}} + \ups\\
			\leq \frac{1+ \frac{1}{2^j} }{2^j} + \frac{1}{2^j} = \frac{1}{2^{j-1}} + \frac{1}{2^{2j}}$  for all $j,k \in \N$, and \\
			\item $A_n(\phi - \phi_{j}) (  q_j^{i,k}  x  q_j^{i,k} ) \leq \frac{\norm{\phi - \phi_{j} }\norm{x}}{2^k} + \eps \rho(   q_j^{i,k} x    q_j^{i,k})\\
			= \frac{1}{2^{k+ 2j}}\norm{x} + \frac{1}{2^j} \rho(   q_j^{i,k} x    q_j^{i,k})$ for all $ n \in \{ 1, 2, \cdots, N_{j+1} \}$ and for all $k \in \N$.
		\end{enumerate}
		
		Now it immediately  follows that for all $ i \in [\kappa]$ and for all $k \in \N$,  $\rho(1-q_j^{i,k})$ converges to $0$ as $j$ tends to $\infty$. We note that this convergence is uniformly over in the variable $k$. Therefore, since $\tau$ is a f.n. state, so, it implies  $\tau(1-q_j^{i,k})$ converges to $0$ as $j$ tends to $\infty$ for all $ i \in [\kappa]$ and $k \in \N$.\\
		
	\noindent	Now choose a subsequence $(j_l)_{l \in \N}$ such that for all $ i \in [\kappa]$
		\begin{align*}
			\tau(1-q^{i,k}_{j_l}) < \frac{\upsilon}{2^{l+k+2}}  \text{ for all } k \in \N.
		\end{align*}
		
		\noindent	For $ i \in [\kappa]$ and $l \in \N$ consider the projections $q^i_l:= \underset{k \in \N  }{ \bigwedge} q^i_{j_l,k}$. Then it follows that 
		\begin{align*}
			\tau(1- q^{i}_l) \leq \frac{\upsilon}{\kappa 2^{l+2}}.
		\end{align*}
		Furthermore, for all $ i \in [\kappa], ~ l \in \N$ and $n \in \{ 1, 2, \cdots, N_{j_l+1} \}$ and $x \in \CM_+$
		\begin{align*}
			A_n(\phi - \phi_{j_l}) (  q_l^i  x  q_l^i ) \leq  \frac{1}{2^{k+ 2j_l}}\norm{x} + \frac{1}{2^{j_l}} \rho(   q_l^i x    q_l^i) \text{ for all } k \in \N.
		\end{align*}
		Therefore, tending $k$ to infinity we have, for all $ i \in [\kappa]$ and $n \in \{ 1, 2, \cdots, N_{j_l+1} \}$ and $x \in \CM_+$
		\begin{align*}
			A_n(\phi - \phi_{j_l}) (  q_l^i  x  q_l^i ) \leq  \frac{1}{2^{j_l}} \rho(   q_l^i x    q_l^i) .
		\end{align*}
		
		\noindent Moreover since $\tau(1-f_j) \to 0$ as $j \to \infty$, we choose projections $(f_{j_l})$ such that 
		\begin{align*}
			\tau(1- f_{j_l}) \leq \frac{\upsilon}{2^{l+2}}.
		\end{align*}
		
		\noindent Finally consider the projection
		\begin{align*}
			e= \Big( \overset{\kappa}{\underset{  i = 1} \bigwedge}~  \underset{l \in \N}{  \bigwedge}  q_l^i  \Big) \bigwedge \Big( \underset{l \in \N}{  \bigwedge} f_{j_l} \Big) \bigwedge q_{s_0}.
		\end{align*}
		Then $\tau(1-e) < \upsilon$ and  for  all $0 \neq x \in e \CM_+ e$, and  suppose $N_{j_l} \leq n, m \leq N_{j_l+1}$, we have
		
		\begin{align*}
			&\frac{\abs{	 \Big(A_n( \phi) - A_m(\phi)\Big) (x)}}{\tau(x)}\\
			=&  \frac{\abs{	 \Big(A_n( \phi)- A_n(\phi_{j_l})  + A_n(\phi_{j_l}) -A_m(\phi_{j_l}) + A_m(\phi_{j_l})- A_m(\phi)\Big) (x)}}{\tau(x)}\\
			\leq &  \frac{\abs{\Big(A_n( \phi)- A_n(\phi_{j_l}) \Big)(x)  } }{\tau(x) }+ \frac{\abs{ \Big(A_n(\phi_{j_l}) -A_m(\phi_{j_l}) \Big) (x) }}{\tau(x)} + \frac{ \abs{ \Big( A_m(\phi_{j_l})- A_m(\phi)\Big) (x)}}{\tau(x)}\\
			\leq &  \frac{\abs{\Big(A_n( \phi- \phi_{j_l}) \Big)(x)  } }{\tau(x) }+ \frac{\abs{ \Big(A_n(\phi_{j_l}) -A_m(\phi_{j_l}) \Big) (x) }}{\tau(x)} + \frac{ \abs{ \Big( A_m(\phi_{j_l}- \phi)\Big) (x)}}{\tau(x)}\\
			\leq &  \Big(\frac{\abs{\Big(A_n( \phi- \phi_{j_l}) \Big)(x)  } }{\rho(x) } + \frac{ \abs{ \Big( A_m(\phi_{j_l}- \phi)\Big) (x)}}{\rho(x)} \Big)\frac{\rho(x)}{\tau(x)} + \frac{\abs{ \Big(A_n(\phi_{j_l}) -A_m(\phi_{j_l}) \Big) (x) }}{\tau(x)}\\
			\leq& s_0  \frac{2}{2^{j_l+1}} + \frac{1}{2^{j_l}} = (s_0 +1) \frac{1}{2^{j_l}} ~( \text{by Eq. } \ref{rho-tau}). 
		\end{align*}	
		
\noindent		Note that as $m,n \to \infty$, $j_l \to \infty$. Therefore,
		\begin{align*}
			\lim_{m,n \to \infty} \sup_{x \in eM_+e, x \neq 0} \frac{\abs{	 \Big(A_n( \phi) - A_m(\phi)\Big) (x)}}{\tau(x)}=0
		\end{align*}

		Therefore, $(A_n(\phi))_{n \in \N}$ is Cauchy in b.a.u.. Therefore, by Proposition \ref{complete wrt bau-functionals}, it follows that $(A_n(\phi))_{n \in \N}$ is convergent in b.a.u., which implies the set $\CC$ is closed in $\CM_*$.
	\end{proof}
	
\noindent 	Finally, we have the following ergodic theorem.
	
	\begin{thm}\label{erg thm-1}
		Let $(\CM, G, \alpha, \rho)$ be a kernel and $\tau$ be a f.n. tracial state on $\CM$. Suppose  $\phi \in \CM_*$, then there exists $\bar{\phi} \in \CM_*$ such that for all $\epsilon>0$ there exists projection $e \in \CM$ with $\tau(1-e)< \epsilon$ and 
		\begin{align*}
			\lim_{n \to \infty} \sup_{x \in eM_+e, x \neq 0} \frac{\abs{(A_n(\phi) - \bar{\phi})(x)}}{\tau(x)}=0.
		\end{align*}
	\end{thm}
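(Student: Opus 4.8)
The plan is to run a Banach principle argument built on two facts already established: a dense set $\CW \subseteq \CM_{*s}$ on which the ergodic averages converge in a strong maximal sense (Proposition \ref{dense set}), and the closedness of the convergence set $\CC := \{\phi \in \CM_* : A_n(\phi) \text{ converges b.a.u.}\}$ (Theorem \ref{Banach principle}). First I would show $\CW \subseteq \CC$; then, since $\CC$ is closed and $\CW$ is dense in $\CM_{*s}$, I would deduce $\CM_{*s} = \overline{\CW} \subseteq \CC$; finally I would extend to arbitrary $\phi \in \CM_*$ by splitting into self-adjoint parts.

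To carry out the first step I would identify our averages $A_n = A_{r_n}$ with the averages $B_{r_n}$ appearing in Proposition \ref{dense set}: because $G$ is unimodular and the balls $V_{r_n}$ are symmetric, Remark \ref{dense set-rem1}(3) gives $A_{r_n}(\nu) = B_{r_n}(\nu)$ for every $\nu \in \CM_*$. For $\nu \in \CW$, Proposition \ref{dense set}(ii) together with Remark \ref{dense set-rem1}(1) then yields $\lim_n \sup_{x \in p\CM_+ p,\, x \neq 0} \abs{(A_n(\nu) - \bar\nu)(x)}/\rho(x) = 0$ for every projection $p$, where $\bar\nu$ is the mean-ergodic limit supplied by Theorem \ref{mean erg thm for state}.

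The \emph{main obstacle} is that b.a.u.\ convergence (Definition \ref{bau conv- functionals}) is measured against the trace $\tau$, whereas the dense-set convergence above is naturally expressed against the invariant state $\rho$, and these two are genuinely different. The bridge, following Remark \ref{dense set-rem1}(2), is to cut down by the spectral projection $q_{s_0} := \chi_{(1/s_0, s_0)}(X)$ of the $\tau$-density $X$ of $\rho$: on $q_{s_0} \CM_+ q_{s_0}$ one has $\rho(x)/\tau(x) \le s_0$, while $\tau(1 - q_{s_0})$ can be made arbitrarily small. This turns the $\rho$-estimate into the required $\tau$-estimate, giving b.a.u.\ convergence of $A_n(\nu)$ to $\bar\nu$ for $\nu \in \CW$, hence $\CW \subseteq \CC$.

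It then remains to assemble the pieces. For self-adjoint functionals the hypothesis of Theorem \ref{Banach principle} holds by Corollary \ref{maximal ineq for averages-cor}, so $\CC$ is closed and $\CM_{*s} \subseteq \overline{\CW} \subseteq \CC$. For general $\phi \in \CM_*$ I would write $\phi = \phi_1 + i\phi_2$ with $\phi_1, \phi_2 \in \CM_{*s}$, set $\bar\phi := \bar\phi_1 + i\bar\phi_2$, and combine the two b.a.u.\ limits on the meet $e := e_1 \wedge e_2$ of the witnessing projections, using $e\CM_+ e \subseteq e_j \CM_+ e_j$, $\tau(1-e) < \epsilon$, linearity of $A_n$, and the triangle inequality. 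The only routine point left is that finite meets of projections of small co-trace again have small co-trace.
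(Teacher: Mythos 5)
Your proposal is correct and follows essentially the same route as the paper: the paper's proof of Theorem \ref{erg thm-1} is exactly the combination of Proposition \ref{dense set} (density of $\CW$ and convergence on it), Remark \ref{dense set-rem1} (identifying $A_{r_n}$ with $B_{r_n}$ and converting the $\rho$-estimate to a $\tau$-estimate via the spectral projection $q_{s_0}$ of the density of $\rho$), and the closedness of $\CC$ from Theorem \ref{Banach principle}. You have merely spelled out the details the paper leaves implicit, including the routine extension from $\CM_{*s}$ to $\CM_*$ by splitting into self-adjoint parts.
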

	
	\begin{proof}
		The proof follows from Proposition \ref{dense set} and Theorem \ref{Banach principle} in accordance with Remark \ref{dense set-rem1}.
	\end{proof}

	Let $(\CM, G, \alpha, \rho)$ be a kernel and $\tau$ be a f.n. tracial state on $\CM$. Now we can state the Theorem \ref{erg thm-1} in $L^1(\CM, \tau)$. Let $X \in L^1(\CM, \tau)$. Recall Eq. \ref{predual trans} and consider the averages
	\begin{align*}
		A_n(X):= A_{r_n}(X):= \frac{1}{\mu(V_{r_n})} \int_{V_{r_n}} \hat{\alpha}_{s^{-1}}(X) d \mu(s).
	\end{align*}
	
	Similar to Eq. \ref{symm avg}, observe that 
	\begin{align*}
		A_n(X)= \frac{1}{\mu(V_{r_n})} \int_{V_{r_n}} \hat{\alpha_s}(X) d \mu(s).
	\end{align*}

	\begin{thm}\label{erg thm-2}
		Let $(\CM, G, \alpha, \rho)$ be a kernel and $X \in L^1(\CM, \tau)$. Then there exist $\bar{X} \in L^1(\CM, \tau)$ such that $A_n(X)$ converges b.a.u. to $\bar{X}$.
	\end{thm}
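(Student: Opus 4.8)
The plan is to transport the conclusion of Theorem \ref{erg thm-1} from the predual $\CM_*$ to $L^1(\CM,\tau)$ by means of the isometric identification $\CM_*=L^1(\CM,\tau)$ of Theorem \ref{predual prop}(3) together with the equivalence between the two notions of b.a.u.\ convergence recorded in Remark \ref{equiv of bau convg}. First I would fix $X \in L^1(\CM,\tau)$ and let $\phi \in \CM_*$ be the functional corresponding to $X$ under the pairing $\phi(x)=\tau(Xx)$, so that $\norm{\phi}=\norm{X}_1$ and the correspondence preserves positivity.

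The one step that requires an actual computation is to verify that the two averaging operations intertwine, i.e.\ that $A_n(\phi)$ corresponds to $A_n(X)$ under this identification. Using the predual transformation identity \eqref{predual trans} with $\beta=\alpha$, together with the trace property $\tau(Xx)=\tau(xX)$ from Theorem \ref{predual prop}(2), one has for every $x \in \CM$ and $s \in G$ the equality $\phi(\alpha_s(x))=\tau(\alpha_s(x)X)=\tau(x\,\hat{\alpha_s}(X))$. Integrating over $V_{r_n}$ and normalizing by $\mu(V_{r_n})$ then yields $A_n(\phi)(x)=\tau\big(x\,A_n(X)\big)$, where $A_n(X)=\frac{1}{\mu(V_{r_n})}\int_{V_{r_n}}\hat{\alpha_s}(X)\,d\mu(s)$; by the symmetry of the disks $V_{r_n}$ and unimodularity of $G$ (Eq.\ \eqref{symm avg}) this coincides with the average $\frac{1}{\mu(V_{r_n})}\int_{V_{r_n}}\hat{\alpha}_{s^{-1}}(X)\,d\mu(s)$ appearing in the statement. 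Hence $A_n(\phi)\leftrightarrow A_n(X)$ under the duality for every $n$.

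Finally I would apply Theorem \ref{erg thm-1} to the functional $\phi$ to obtain $\bar{\phi} \in \CM_*$ such that $A_n(\phi)$ converges to $\bar{\phi}$ b.a.u.\ in the sense of Definition \ref{bau conv- functionals}. Letting $\bar{X}\in L^1(\CM,\tau)$ be the element corresponding to $\bar{\phi}$, Remark \ref{equiv of bau convg}(1) shows that b.a.u.\ convergence of the functionals $A_n(\phi)\to\bar{\phi}$ is equivalent to b.a.u.\ convergence of the associated $L^1$-elements $A_n(X)\to\bar{X}$ in the sense of Definition \ref{bau conv defn}, which is exactly the assertion.

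I expect the proof to be essentially a transcription through the two remarks, so no serious obstacle arises. The only point meriting care is the intertwining in the second paragraph: one must correctly invoke the predual transformation identity and then use unimodularity to line up $\hat{\alpha_s}$ with $\hat{\alpha}_{s^{-1}}$ so that the $L^1$-average in the statement matches the image of $A_n(\phi)$; everything else follows formally from the isometry of Theorem \ref{predual prop}(3) and Remark \ref{equiv of bau convg}.
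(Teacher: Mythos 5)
Your proposal is correct and follows exactly the paper's route: the paper's proof is the one-line deduction from Theorem \ref{erg thm-1} together with Remark \ref{equiv of bau convg}, and your intertwining computation just makes explicit the identification $A_n(\phi)\leftrightarrow A_n(X)$ that the paper sets up (via Eq.\ \ref{symm avg}) before stating the theorem. No issues.
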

	
	\begin{proof}
		The proof follows from Theorem \ref{erg thm-1} and Remark \ref{equiv of bau convg}.
	\end{proof}

	\section{\textbf{Stochastic Ergodic Theorem}}
	
	This section combines our previous results to prove a stochastic ergodic theorem. Throughout this section, as before, we assume that $\CM \subseteq \CB(\h)$ is a von Neumann algebra with a f.n. tracial state  $\tau$ and we consider $L^1(\CM, \tau)$. Further, we also take $G$ as an amenable group metric measure space satisfying the doubling condition.
	Then, we consider a covariant system $( \CM, G, \alpha)$. 
As earlier, for $ X \in L^1(\CM, \tau)$, we consider the following average 
\begin{align*}
A_n(X):= A_{r_n}(X):= \frac{1}{\mu(V_{r_n})} \int_{V_{r_n}} \alpha^*_s(\nu) d \mu(s)=  \frac{1}{\mu(V_{r_n})} \int_{V_{r_n}} \hat{\alpha}_{s^{-1}}(X) d \mu(s).
\end{align*}
Then using the previous results, we established the stochstic ergodic theorem for the average $A_n(\cdot)$. First, we recall the following theorem from \cite{Bik-Dip-neveu}.  
\begin{thm}[Neveu Decomposition] \label{neveu decomposition}
Let $(\CM,  G, \alpha )$ be a covariant system and $\tau$ be a f.n. tracial state on $\CM$. Then there exist two projections $e_1, e_2 \in \CP(\CM)$ such that $e_1 + e_2 = 1$ and $e_1e_2=0$ such that 
\begin{itemize}
	\item[(i)] there exists a $G$-invariant normal state $\rho$ on $\CM$  with support $s(\rho) = e_1$ and
	\item[(ii)] there exists an operator $x_0 \in \CM$ with support $s(x_0)= e_2$ such that $  \frac{1}{\mu(V_{r_n})} \int_{V_{r_n}} \alpha_s(x_0) d \mu(s) \xrightarrow{ n \ri \infty } 0$ in $\norm{ \cdot}$.
\end{itemize}
Further, $s(\rho)$ and $s(x_0)$ are  unique. 
\end{thm}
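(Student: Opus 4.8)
The plan is to produce the conservative projection $e_1$ out of the $G$-invariant normal states and to take the dissipative part to be its complement. I would set $\mathcal{S} := \{\rho \in \CM_{*+} : \rho \text{ a normal state}, \ \rho \circ \alpha_s = \rho \ \forall s \in G\}$ and define $e_1 := \bigvee\{ s(\rho) : \rho \in \mathcal{S}\}$. Since $\CH$ is separable, $\CM_*$ is norm separable, so the lattice of supports $\{s(\rho):\rho\in\mathcal{S}\}$ has a countable cofinal family $(\rho_n)$ with $\bigvee_n s(\rho_n) = e_1$; the convex combination $\rho := \sum_n 2^{-n}\rho_n$ is again normal, $G$-invariant, and has support exactly $e_1$. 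Because $\rho \circ \alpha_s = \rho$ forces $\alpha_{s^{-1}}(s(\rho)) = s(\rho)$, the projection $e_1$ is $G$-invariant, hence so is $e_2 := 1 - e_1$. This settles (i), and uniqueness of $s(\rho)$ is immediate from the construction: any invariant normal state has support $\leq e_1$, so $e_1$ is the largest such support and $e_2 = 1 - e_1$ is forced.

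For (ii) the starting point is that, by maximality of $e_1$, no nonzero corner $p\CM p$ with $0 \neq p \leq e_2$ carries a $G$-invariant normal state. The mechanism I would exploit is that the sequence $(r_i)$ behaves like a F\o lner sequence: for fixed $t$ one has $\norm{\alpha_t(A_n(y)) - A_n(y)} \leq \frac{\mu(V_{r_n}\triangle tV_{r_n})}{\mu(V_{r_n})}\norm{y} \to 0$, exactly the coboundary estimate already underlying Theorem \ref{Mean erg} and Lemma \ref{dense set-lem}. Consequently every weak\textsuperscript{*}-cluster point of $(A_n(y))_n$ is $G$-invariant, and the absence of invariant states on the corner prevents such a cluster point from carrying mass. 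I would then run an exhaustion: using this, extract from each nonzero corner a positive element whose averages decay, produce by a Zorn/maximality argument a countable orthogonal family $(x_k)$ of such pieces summing up to full support $e_2$, and assemble $x_0 := \sum_k c_k x_k$ with weights $c_k$ chosen so that $s(x_0) = e_2$ while the tail control yields $\norm{A_n(x_0)} \to 0$.

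The hard part is precisely the middle step of (ii): upgrading ``no invariant normal state on the corner'' to a genuine positive element whose F\o lner averages converge to $0$ in \emph{operator} norm, and then arranging that the assembled $x_0$ has full support $e_2$. Weak\textsuperscript{*} vanishing of cluster points is comparatively soft; extracting norm convergence is the delicate point, and it is here that one must combine the mean ergodic behaviour of Theorem \ref{mean erg thm for state} with the maximal inequality, playing the role of the classical Hopf/Chacon--Ornstein dissipative charge. For this reason I would ultimately invoke the complete filling-scheme argument of \cite{Bik-Dip-neveu}, from which the theorem is quoted.
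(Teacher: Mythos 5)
The paper does not prove this theorem: it is quoted verbatim from \cite{Bik-Dip-neveu}, so there is no in-paper argument to measure you against. With that caveat, your part (i) is exactly the standard construction used there: take $e_1$ to be the join of the supports of all $G$-invariant normal states, use $\sigma$-finiteness (separability of $\CM_*$, or just the faithful normal trace $\tau$) to extract a countable subfamily with the same join, sum it into a single invariant normal state of support $e_1$, and read off invariance and maximality of $e_1$, which also gives the uniqueness assertions. For part (ii) you give a correct diagnosis rather than a proof. The F\o lner-type estimate (which, for the right-invariant Haar measure used here, should be phrased with $V_{r_n}t$ rather than $tV_{r_n}$, as in Lemma \ref{dense set-lem}) only shows that weak-$*$ cluster points of $A_n(y)$ are $G$-invariant elements of $\CM$; the phrase ``prevents such a cluster point from carrying mass'' conflates the algebra and predual pictures and does not by itself produce a positive $x_0$ with $s(x_0)=e_2$ and $\norm{A_n(x_0)}\to 0$. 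That upgrade is the substantive content of (ii) --- it needs a Hahn--Banach separation together with the normal/singular decomposition of invariant functionals (or the filling scheme) --- and you explicitly defer it to \cite{Bik-Dip-neveu}. Since the paper defers the entire proof to that same reference, your proposal is consistent with the paper's treatment; just be clear that, as written, it is a complete argument for (i) and the uniqueness claims plus a roadmap for (ii), not a self-contained proof of the theorem.
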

Now, we collect the following basic facts from  \cite{Bik-Dip-neveu}. 
\begin{enumerate}
	\item $ \alpha_g(e_i) = e_i $ for all $ g \in G$ and $ i =1, 2$.
	\item The restriction of $\alpha$ to the reduced von Neumann algebra $\CM_{e_i}$ ($= e_i \CM e_i$) induces an action by automorphisms, and we denote this induced action by the same notation $\alpha$. Thus, $(\CM_{e_i}, G, \alpha)$ for $i = 1, 2$ becomes a non-commutative dynamical system.

	\item 
	 For $i=1,2$, we also note that for all $g \in G$, the predual transformation  $\hat{\alpha_g}$ defined on $L^1(\CM, \tau)$ satisfies 
	\begin{align*}
	\hat{\alpha_g} (e_i X e_i) = e_i \hat{\alpha_g}(X) e_i \text{ for all } X \in L^1(\CM, \tau) \text{ and for all } g \in G.
	\end{align*}
	As a consequence, the ergodic averages satisfy
	\begin{align*}
	e_i A_n(X) e_i = A_n(e_i X e_i) \text{ for all } X \in L^1(\CM, \tau), n \in \N \text{ and } i=1,2.
	\end{align*}
	
\end{enumerate}
We write $\tau_{e_i}= \frac{1}{\tau(e_i)}\tau|_{e_i \CM e_i}$ for $i = 1, 2$. Now, we have the following theorem.

\begin{thm}\label{conv in e1-e2 corner} Let $(\CM, G, \alpha)$  be a covariant system and 
	$\tau$ be a f.n. tracial state on $\CM$.  Consider the projections $e_1, e_2 \in \CM$ as mentioned above. Then, we have the following results.  
	\begin{enumerate}
		\item[(i)] For all $B \in L^1(\CM_{e_1}, \tau_{e_1})$, there exists $\bar{B} \in L^1(\CM_{e_1}, \tau_{e_1})$ such that $A_n(B)$ converges b.a.u to $\bar{B}$. Moreover,  $A_n(B)$ converges in measure to $\bar{B}$.
		\item[(ii)] For all $B \in L^1(\CM_{e_2}, \tau_{e_2})$, $A_n(B)$ converges to $0$ in measure.
		
	\end{enumerate}
Moreover, for  $X \in L^1(\CM, \tau)$, there exists $Z \in L^1(\CM, \tau)$ such that $A_n(X)$ converges to $Z$ in measure  and we have  $ e_1Z e_1 = Z $ and $ e_2Z e_2 =0$.
\end{thm}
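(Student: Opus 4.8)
The plan is to treat the two corners $\CM_{e_1}$ and $\CM_{e_2}$ of the Neveu decomposition (Theorem \ref{neveu decomposition}) separately, and then to obtain the general statement by splitting $X=\sum_{i,j\in\{1,2\}}e_iXe_j$ and controlling the four corners individually. Throughout I use that the automorphisms $\alpha_s$ preserve $\tau$, so that $\hat\alpha_{s^{-1}}=\alpha_s$ on $L^1(\CM,\tau)$ and, since the balls $V_{r_n}$ are symmetric, $A_n$ is simply the average $\frac{1}{\mu(V_{r_n})}\int_{V_{r_n}}\alpha_s(\cdot)\,d\mu$; in particular $A_n$ is a positive $L^1$-contraction which is completely positive on bounded elements.

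\emph{Part (i).} On $\CM_{e_1}=e_1\CM e_1$ the $G$-invariant normal state $\rho$ supplied by the Neveu decomposition has support $e_1$, hence restricts to a \emph{faithful} normal state on $\CM_{e_1}$ with $\rho\circ\alpha_s=\rho$; as $\alpha_s(e_1)=e_1$ and $\alpha$ preserves $\tau$, the restricted action preserves $\tau_{e_1}$. Thus $(\CM_{e_1},G,\alpha,\rho)$ is a kernel in the sense of Definition \ref{kernel defn}, equipped with the tracial state $\tau_{e_1}$, and Theorem \ref{erg thm-2} yields for every $B\in L^1(\CM_{e_1},\tau_{e_1})$ an element $\bar B$ with $A_n(B)\to\bar B$ b.a.u. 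Finally b.a.u.\ convergence implies convergence in measure: given $\eps,\delta>0$, the projection $e$ witnessing b.a.u.\ convergence (with $\tau(1-e)<\delta$) satisfies $\norm{e(A_n(B)-\bar B)e}<\eps$ for all large $n$, which is exactly the defining property of measure convergence.

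\emph{Part (ii).} Here I will exploit the positive element $x_0\in\CM$ with $s(x_0)=e_2$ for which $A_n(x_0)=\frac{1}{\mu(V_{r_n})}\int_{V_{r_n}}\alpha_s(x_0)\,d\mu\to 0$ in operator norm. For $\eta>0$ put $p_\eta:=\chi_{[\eta,\infty)}(x_0)$ and $q_\eta:=e_2-p_\eta$, so that $p_\eta\uparrow e_2$, $\tau(q_\eta)\to 0$, and $p_\eta\le\tfrac1\eta x_0$. For bounded $B\in(\CM_{e_2})_+$ positivity of $A_n$ gives $0\le A_n(p_\eta Bp_\eta)\le\tfrac{\norm{B}}{\eta}A_n(x_0)$, so $A_n(p_\eta Bp_\eta)\to 0$ in operator norm, hence in measure. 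The remainder $B-p_\eta Bp_\eta=p_\eta Bq_\eta+q_\eta Bp_\eta+q_\eta Bq_\eta$ satisfies $\norm{B-p_\eta Bp_\eta}_1\to 0$ as $\eta\to 0$ (the diagonal piece by $\norm{q_\eta Bq_\eta}_1\le\norm{B}\tau(q_\eta)$, the off-diagonal pieces by Cauchy--Schwarz in $L^2$ against $\tau(q_\eta)^{1/2}$); since $A_n$ is an $L^1$-contraction, $\sup_n\norm{A_n(B-p_\eta Bp_\eta)}_1$ is then small, and an $L^1$-bound forces smallness in the (metrizable) measure topology. Letting $\eta\to 0$ gives $A_n(B)\to 0$ in measure for bounded positive $B$, and density of $\CM_{e_2}$ in $L^1(\CM_{e_2},\tau_{e_2})$ together with the $L^1$-contractivity of $A_n$ and completeness of the measure topology (cf.\ \cite{nelson1974notes}) extends this to all $B\in L^1(\CM_{e_2},\tau_{e_2})$.

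\emph{The ``moreover'' statement.} By the stated covariance $A_n(e_iXe_j)=e_iA_n(X)e_j$. The diagonal pieces are covered by (i) and (ii): $A_n(e_1Xe_1)\to\overline{e_1Xe_1}$ in measure and $A_n(e_2Xe_2)\to 0$ in measure. For the off-diagonal pieces take first $X\in\CM$ bounded and set $Y=e_2Xe_1$; the Kadison--Schwarz inequality for the unital completely positive map $A_n$ gives $A_n(Y)A_n(Y)^*\le A_n(YY^*)$ with $YY^*\in(\CM_{e_2})_+$, so by (ii) $A_n(YY^*)\to 0$ in measure, whence $A_n(Y)A_n(Y)^*\to 0$ and therefore $A_n(Y)\to 0$ in measure (if $\norm{fA_n(Y)A_n(Y)^*f}<\eps^2$ then $\norm{fA_n(Y)}<\eps$); the piece $e_1Xe_2$ is handled identically via $A_n(Z')^*A_n(Z')\le A_n(Z'^*Z')$. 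Passing from bounded $X$ to arbitrary $X\in L^1(\CM,\tau)$ by $L^1$-approximation, the uniform bound $\sup_n\norm{A_n(X)-A_n(X_k)}_1\le\norm{X-X_k}_1$ shows $(A_n(X))_n$ is Cauchy in measure, hence converges to some $Z\in L^1(\CM,\tau)$; continuity of $Y\mapsto e_iYe_j$ in measure yields $e_1Ze_1=\overline{e_1Xe_1}$, $e_2Ze_2=0$ and $e_1Ze_2=e_2Ze_1=0$, so in fact $Z=\overline{e_1Xe_1}\in L^1(\CM_{e_1})$, as required.

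The main obstacle is Part (ii): on the dissipative corner $\CM_{e_2}$ there is no $G$-invariant state, so the weak-type maximal inequality of \S\ref{Weak type inequality} is unavailable and convergence to $0$ cannot be deduced from a Banach-principle argument. The resolution is the domination against the full-support element $x_0$, which forces the truncated averages to vanish in norm while the spectral-truncation error is kept small in $L^1$; propagating this to the off-diagonal corners is then only a matter of the Kadison--Schwarz inequality, which reduces each of them to a positive element of the $e_2$-corner.
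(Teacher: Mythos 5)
Your overall architecture --- treat the conservative corner $\CM_{e_1}$ by checking that $(\CM_{e_1},G,\alpha,\rho)$ is a kernel and invoking Theorem \ref{erg thm-2}, treat the dissipative corner via the operator $x_0$, then assemble the four corners of $X$ --- is exactly the strategy the paper intends (its own proof simply defers to \cite[\S 5]{Bik-Dip-neveu}), and your Part (i), including the passage from b.a.u.\ to measure convergence, is correct. The problem is the blanket assumption you make at the outset, namely that $\alpha_s$ preserves $\tau$ and hence $\hat\alpha_{s^{-1}}=\alpha_s$ on $L^1(\CM,\tau)$. This is not a hypothesis of the theorem, and it cannot be added: if $\tau\circ\alpha_s=\tau$ for all $s$, then $\tau$ itself is a $G$-invariant normal state of full support, the Neveu decomposition gives $e_1=1$, $e_2=0$, and Part (ii) becomes vacuous. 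The trace $\tau$ is only there to build $L^1(\CM,\tau)$; the invariant functional is $\rho$, and it is supported on $e_1$ only.

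That assumption does real work in two places. In Part (ii) you dominate $A_n(p_\eta Bp_\eta)\le\frac{\norm{B}}{\eta}A_n(x_0)$ and then let $A_n(x_0)\to0$ in operator norm; but the $A_n$ acting on $L^1$ is the average of the predual maps $\hat\alpha_{s^{-1}}$, whereas Theorem \ref{neveu decomposition} asserts norm convergence of the average of the \emph{algebra} maps $\alpha_s$ applied to $x_0$. Without $\tau$-invariance these are different operators (generically $\hat\alpha_s(y)$ is $\alpha_s^{-1}(y)$ twisted by the density of $\tau\circ\alpha_s^{-1}$ with respect to $\tau$), and for the predual average one only has $\norm{A_n(x_0)}_1=\tau(x_0)$, which does not decay; so the domination gives nothing. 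The standard repair is to test the average against $x_0$ by duality rather than truncating $B$: for $B\in L^1(\CM_{e_2},\tau_{e_2})_+$ one has $\tau\bigl(A_n(B)\,x_0\bigr)=\tau\bigl(B\,\tfrac{1}{\mu(V_{r_n})}\int_{V_{r_n}}\alpha_s(x_0)\,d\mu(s)\bigr)\le\norm{B}_1\,\norm{\tfrac{1}{\mu(V_{r_n})}\int_{V_{r_n}}\alpha_s(x_0)\,d\mu(s)}\to0$, and then $\norm{p_\eta A_n(B)p_\eta}_1=\tau(A_n(B)p_\eta)\le\frac1\eta\,\tau(A_n(B)x_0)$, which combined with $\tau(e_2-p_\eta)\to0$ yields convergence to $0$ in measure. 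The same confusion affects your off-diagonal step: the Kadison--Schwarz inequality $A_n(Y)A_n(Y)^*\le A_n(YY^*)$ is valid for the unital positive algebra average on $\CM$, not for the predual average on $L^1$, so the reduction of $e_1Xe_2$ and $e_2Xe_1$ to the $e_2$-corner must also be rerouted (for instance through the duality above, or through an $L^1$--$L^2$ Cauchy--Schwarz estimate as in \cite{Bik-Dip-neveu}).
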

\begin{proof}
Once we have the  Neveu decomposition (Theorem \ref{neveu decomposition}) and Theorem \ref{erg thm-2}, the proof follows verbatim as \cite[see \S 5]{Bik-Dip-neveu}. 
\end{proof}

{\bf Acknowledgments:} The second author is thankful to Prof. Eric Ricard for helpful discussions on martingale convergence.	
		

\newcommand{\etalchar}[1]{$^{#1}$}
\providecommand{\bysame}{\leavevmode\hbox to3em{\hrulefill}\thinspace}
\providecommand{\MR}{\relax\ifhmode\unskip\space\fi MR }
\providecommand{\MRhref}[2]{%
	\href{http://www.ams.org/mathscinet-getitem?mr=#1}{#2}
}
\providecommand{\href}[2]{#2}

\end{document}